\newtheorem{theorem}{Theorem}
\newtheorem{conj}[theorem]{Conjecture}
\newtheorem{lemma}[theorem]{Lemma}
\theoremstyle{definition}
\theoremstyle{remark}
\numberwithin{equation}{section}
\numberwithin{theorem}{section}
\numberwithin{defn}{section}
\DeclareMathOperator{\CT}{CT}
\newcommand{\padedvphantom}[3]{%
	\vtop{%
		\vbox{%
			\vspace*{#2}%
			\hbox{\vphantom{#1}}%
		}%
		\vspace*{#3}%
	}%
}
\begin{document}
\title[Some New Modular Rank Four Nahm Sums]
 {Some New Modular Rank Four Nahm Sums as Lift-dual of Rank Three Examples}

\author{Zhineng Cao and Liuquan Wang}

\address[Z.\ Cao]{School of Mathematics and Statistics, Wuhan University, Wuhan 430072, Hubei, People's Republic of China}
\email{zhncao@whu.edu.cn}

\address[L.\ Wang]{School of Mathematics and Statistics, Wuhan University, Wuhan 430072, Hubei, People's Republic of China}

\email{wanglq@whu.edu.cn;mathlqwang@163.com}

\subjclass[2010]{05A30, 11P84, 33D15, 33D60, 11F03}

\keywords{Nahm sums; Rogers--Ramanujan type identities; Bailey pairs; modular triples; lift-dual operation}

\begin{abstract}
We find nine new sets of rank four Nahm sums associated with nine different numeric matrices which are likely to be modular. They are discovered by applying the lift-dual operation to some modular rank three Nahm sums in the works of Zagier and the authors. We prove the modularity of four sets of these Nahm sums by establishing Rogers--Ramanujan type identities which express them as modular infinite products. We use various $q$-series techniques including the constant term method and Bailey pairs to prove these identities. Meanwhile, we present some conjectural identities expressing several Nahm sums as modular infinite products.
\end{abstract}

\maketitle

\section{Introduction}\label{sec-intro}
Throughout this paper we use standard $q$-series notation:
\begin{align}
(a;q)_0:=1, \quad (a;q)_n:=\prod\limits_{k=0}^{n-1}(1-aq^k), \quad (a;q)_\infty :=\prod\limits_{k=0}^\infty (1-aq^k),  \\
(a_1,\dots,a_m;q)_n:=(a_1;q)_n\cdots (a_m;q)_n, \quad n\in \mathbb{N}\cup \{\infty\}.
\end{align}
For brevity we also adopt the notation:
\begin{align}\label{J-defn}
J_m:=(q^{m};q^{m})_{\infty}, \quad J_{a,m}:=(q^{a},q^{m-a},q^{m};q^{m})_{\infty}.
\end{align}

Rogers--Ramanujan type identities are certain sum-to-product $q$-series identities which express $q$-hypergeometric series as infinite products. The study of them dates back to the famous Rogers--Ramanujan identities \cite{Rogers}:
\begin{align}\label{RR}
\sum_{n=0}^\infty \frac{q^{n^2}}{(q;q)_n}=\frac{1}{(q,q^4;q^5)_\infty},
\quad
\sum_{n=0}^\infty \frac{q^{n^2+n}}{(q;q)_n}=\frac{1}{(q^2,q^3;q^5)_\infty}.
\end{align}
Such kind of identities have significant implications in various areas such as combinatorics, number theory and mathematical physics. In particular, the product side of such identities usually indicates the modularity of the original series, which is not easy to be observed from the sum-side. One of the key problems linking the theory of $q$-series and modular forms is to determine the modularity of certain $q$-hypergeometric series.

With motivation from rational conformal field theories, Nahm \cite{Nahm1994,Nahmconf,Nahm07} considered a particular class of $q$-hypergeometric series called \emph{Nahm sum} or \emph{Nahm series}:
\begin{align}\label{eq-Nahm}
    f_{A,B,C}(q):=\sum_{n=(n_1,\dots,n_r)^\mathrm{T} \in \mathbb{N}^r}\frac{q^{\frac{1}{2}n^\mathrm{T}An+n^\mathrm{T}B+C}}{(q;q)_{n_1} \cdots (q;q)_{n_r}}.
\end{align}
Here $r$ is a positive integer, $A$ is a $r\times r$ positive definite matrix, $B$ is a $r$-dimensional column vector and $C$ is a rational scalar. Nahm proposed the problem to find all $A,B,C$ with rational entries so that $f_{A,B,C}(q)$ is modular, and such $(A,B,C)$ is usually referred to as a \emph{modular triple}.

Around 2007, Zagier \cite{Zagier} explicitly stated a conjecture attributed to Nahm, which provides some necessary and sufficient conditions on $A$ so that it is the matrix part of some modular triple $(A,B,C)$. This conjecture was proved in the rank one case by Zagier \cite{Zagier}. In fact, Zagier \cite{Zagier} showed that there are exactly seven rank one modular triples with $(A,B,C)$ being
\begin{equation}\label{rank1-exam}
\begin{split}
&(2,0,-1/60), ~~ (2,1,11/60), ~~ (1/2,0,-1/40), ~~ (1/2,1/2,1/40), \\
&\left(1,0,-1/48\right), \quad \left(1,1/2,1/24\right), \quad \left(1,-1/2,1/24\right).
\end{split}
\end{equation}
Vlasenko and Zwegers \cite{VZ} discovered some counterexamples to Nahm's conjecture in the rank two case. Recently, Calegari, Garoufalidis and Zagier \cite{CGZ} proved one direction of Nahm's conjecture.

As a full solution to Nahm's problem seems far from reach,  it deserves to find more and more explicit modular triples. In this direction, Zagier \cite{Zagier} provided 11 and 12 possible modular triples in the rank two and rank three cases, respectively. Their modularity have now all been confirmed by  Zagier \cite{Zagier}, Vlasenko--Zwegers \cite{VZ}, Cherednik--Feigin \cite{Feigin}, Wang \cite{Wang-rank2,Wang-rank3} and Cao--Rosengren--Wang \cite{CRW}. In their study of twisted modules of principal subspace vertex algebras, Calinescu--Milas--Penn \cite{CMP} conjectured that the rank $r$ tadpole Nahm sum associated with the $r\times r$ tadpole Cartan matrix is modular, and they proved the rank two case which coincides with one of Zagier's rank two examples \cite[Table 2]{Zagier}. Milas and Wang \cite{MW24} proved the rank three case and thereby proving the modularity of the dual example of Zagier's sixth rank three example. Recently, Shi and Wang \cite{Shi-Wang} proved the Calinescu--Milas--Penn conjecture for the cases $r=4,5$ and provided some new rank four and rank five modular triples.

Besides extensive search through computer programs, it is natural to ask whether we can generate new modular triples from known ones. Zagier \cite{Zagier} observed that there might exist some dual structure among modular triples. Let $(A,B,C)$ be a rank $r$ modular triple. We define its dual as the image of the operator:
\begin{align}
    \mathcal{D}:(A,B,C)\longmapsto (A^\star, B^\star, C^\star)=(A^{-1},A^{-1}B,\frac{1}{2}B^\mathrm{T} A^{-1}B-\frac{r}{24}-C).
\end{align}
As observed and suggested by Zagier \cite[p.\ 50, (f)]{Zagier}, it is very likely that $\mathcal{D}(A,B,C)$ is still a modular triple. Recently, Wang \cite{Wang-counterexample}  provided a counterexample for this expectation. For any positive integer $m$, Vlasenko and Zwegers \cite[Theorems 4.1 and 4.2]{VZ} presented some formulas revealing relations between certain rank $mr$ Nahm sums and  rank $r$ Nahm sums.

Based on Zagier's duality expectation, the authors \cite{Cao-Wang2024} presented an approach called lift-dual operation to find new rank $r+1$ modular triples from rank $r$ modular triples and present explicit examples in the case $r=2$. Namely, for any triple $(A,B,C)$ with
\begin{align*}
&A=\begin{pmatrix} a_1 & a_2 \\ a_2 & a_3\end{pmatrix}, \quad B=\begin{pmatrix} b_1 \\ b_2 \end{pmatrix} \quad \text{and} \quad C\in \mathbb{Q},
\end{align*}
the \emph{lifting operator} lifts $A$ to a $3\times 3$ matrix and $B$ to a three dimensional vector and keep the value of $C$:
\begin{align}
    \mathcal{L}: (A,B,C)\longmapsto (\widetilde{A},\widetilde{B},C)
\end{align}
where
\begin{align*}
&\widetilde{A}=\begin{pmatrix} a_1 & a_2+1 & a_1+a_2 \\ a_2+1 & a_3 & a_2+a_3 \\ a_1+a_2 & a_2+a_3 & a_1+2a_2+a_3 \end{pmatrix}, \quad \widetilde{B}=\begin{pmatrix} b_1 \\ b_2 \\ b_1+b_2\end{pmatrix}.
\end{align*}
Because of the identity \cite[p.\ 20]{Andrews1984}:
\begin{align}\label{Andrews-id}
\frac{1}{(q;q)_i(q;q)_j}=\sum_{k\geq 0} \frac{q^{(i-k)(j-k)}}{(q;q)_k(q;q)_{i-k}(q;q)_{j-k}},
\end{align}
we know that
\begin{align}\label{eq-lift-id}
    f_{A,B,C}(q)=f_{\widetilde{A},\widetilde{B},C}(q).
\end{align}
Hence if $(A,B,C)$ is a rank two modular triple, then $\mathcal{L}(A,B,C)$ is a rank three modular triple if $\widetilde{A}$ is positive definite. Zagier's observation then predicates that the dual triple $\mathcal{D}\mathcal{L}(A,B,C)$ is likely to be modular. The authors \cite{Cao-Wang2024} applied this lift-dual operator to all of Zagier's rank two examples \cite[Table 2]{Zagier}. As a result, they found three sets of new nontrivial rank three modular triples with matrix part being
\begin{align}
    A_1=\begin{pmatrix}
        \frac{1}{2}+m & \frac{1}{2}-m & -\frac{1}{2} \\
        \frac{1}{2}-m & \frac{1}{2}+m & -\frac{1}{2} \\
        -\frac{1}{2} & -\frac{1}{2} & 1
    \end{pmatrix}, \quad A_2=\begin{pmatrix}
        2 & -1 & -1 \\ -1 & 2 & 0 \\ -1 & 0 & 2
    \end{pmatrix}
\end{align}
and
\begin{align}
A_3=\begin{pmatrix}
        3/2 & -1/2 & -1/2 \\
        -1/2 & 3/2 & -1/2 \\
        -1/2 & -1/2 & 3/2
    \end{pmatrix},
\end{align}
respectively. The authors \cite[Eq.\ (6.1) and Table 2]{Cao-Wang2024} found more modular triples associated with $A_1$ when $m=\frac{1}{2}$. For convenience, we shall call the modular triples in \cite[Table 2, (4.3) and (1.15)]{Cao-Wang2024} associated with $A_1$ ($m=\frac{1}{2}$), $A_2$ and $A_3$ as the CW Examples 1, 2 and 3, respectively.

As a sequel to \cite{Cao-Wang2024}, we continue our investigation on finding new modular triples. This time we apply the lift-dual operation to some rank three modular triples to generate new rank four modular triples.

For any triple $(A,B,C)$ with
\begin{align*}
&A=\begin{pmatrix} a_1 & a_2 & a_3 \\ a_2 & a_4 & a_5 \\ a_3 & a_5 & a_6 \end{pmatrix}, \quad B=\begin{pmatrix} b_1 \\ b_2 \\ b_3 \end{pmatrix} \quad \text{and} \quad C\in \mathbb{Q},
\end{align*}
we define three lifting operators as follows:
\begin{align}\label{eq-lift}
    \mathcal{L}_{i}: (A,B,C) \longmapsto (\widetilde{A}_i,\widetilde{B}_i,C), \quad i=1,2,3
\end{align}
where
\begin{align}
&\widetilde{A}_1=\begin{pmatrix} a_1 & a_2 & a_3 & a_2+a_3 \\ a_2 & a_4 & a_5+1 & a_4+a_5 \\ a_3 & a_5+1 & a_6 & a_5+a_6 \\ a_2+a_3 & a_4+a_5 & a_5+a_6 & a_4+2a_5+a_6 \end{pmatrix}, \quad \widetilde{B}_1=\begin{pmatrix} b_1 \\ b_2 \\ b_3 \\ b_2+b_3\end{pmatrix}, \label{3-jk} \\
&\widetilde{A}_2=\begin{pmatrix} a_1 & a_2 & a_3+1 & a_1+a_3 \\ a_2 & a_4 & a_5 & a_2+a_5 \\ a_3+1 & a_5 & a_6 & a_3+a_6 \\ a_1+a_3 & a_2+a_5 & a_3+a_6 & a_1+2a_3+a_6 \end{pmatrix}, \quad \widetilde{B}_2=\begin{pmatrix} b_1 \\ b_2 \\ b_3 \\ b_1+b_3\end{pmatrix}, \label{3-ik} \\
&\widetilde{A}_3=\begin{pmatrix} a_1 & a_2+1 & a_3 & a_1+a_2 \\ a_2+1 & a_4 & a_5 & a_2+a_4 \\ a_3 & a_5 & a_6 & a_3+a_5 \\ a_1+a_2 & a_2+a_4 & a_3+a_5 & a_1+2a_2+a_4 \end{pmatrix}, \quad \widetilde{B}_3=\begin{pmatrix} b_1 \\ b_2 \\ b_3 \\ b_1+b_2\end{pmatrix}. \label{3-ij}
\end{align}
For convenience, we shall use $\mathcal{L}_i(A)$ and $\mathcal{L}_i(B)$ to denote $\widetilde{A}_i$ and $\widetilde{B}_i$ in \eqref{eq-lift} and agree that $\mathcal{L}_i(C)=C$.
For the same reason as above (see \eqref{Andrews-id}), we have
\begin{align}
f_{A,B,C}(q)=f_{\widetilde{A}_i,\widetilde{B}_i,C}(q).
\end{align}
Hence if $(A,B,C)$ is a modular triple, then so is any of its lift $(\widetilde{A}_i,\widetilde{B}_i,C)$ ($i=1,2,3$) whenever $\widetilde{A}_i$ is positive definite. Zagier's duality observation then inspires us to consider the modularity of the dual triples $\mathcal{D}\mathcal{L}_i(A,B,C)$ ($i=1,2,3$) which we shall call as the \emph{lift-dual} of the original triple $(A,B,C)$.

In this paper, we mainly apply the lift-dual operation to the rank three modular triples in the works of Zagier \cite{Zagier} and the authors \cite{Cao-Wang2024} to see whether we can get rank four modular triples. Note that some matrices in \cite{Zagier,Cao-Wang2024} contain free parameters and thus generate infinite families of modular Nahm sums. To save space, here we only consider the triples $(A,B,C)$ with $A$ being a purely numeric matrix. The cases where $A$ contains free parameters are more complicated and left to a future work. As in \cite{Wang-rank3}, we label Zagier's rank three examples as Examples 1--12 according to the order of their appearance in \cite[Table 3]{Zagier}. We shall consider Examples 4--12 among Zagier's twelve rank three examples \cite[Table 3]{Zagier} and the aforementioned three sets of modular triples in the work of Cao--Wang \cite[Table 2, (4.3) and (1.15)]{Cao-Wang2024}.  The matrix part and the determinants of their lift are listed in Table \ref{tab-lift}. Note that the lifts of other rank three numeric matrices in \cite[(1.14), (4.2) and (6.1)]{Cao-Wang2024} are not positive definite and hence neglected.
{\small
\begin{table}[htbp]
\centering
 \caption{Matrix part of Zagier's rank three Examples 4--12 and the CW Examples 1--3}\label{tab-lift}
 \begin{tabular}{|c|c|ccc|}
\hline
      Reference & $A$ & $\det \mathcal{L}_1(A)$ & $\det \mathcal{L}_2(A)$ & $\det \mathcal{L}_3(A)$ \\
\hline 
\padedvphantom{I}{2ex}{2ex}
Zagier's Exam.\ 4 & $\left(\begin{smallmatrix}
    2 & 1 & -1 \\ 1 & 1 & 0 \\ -1 & 0 &1
\end{smallmatrix}\right)$
    & $-4$
    & $0$ & $-4$ \\
    \hline  
    \padedvphantom{I}{2ex}{2ex}
Zagier's Exam.\ 5 & $\left(\begin{smallmatrix}
    2 & 1 & 1 \\ 1 & 1 & 0 \\ 1 & 0 & 1
\end{smallmatrix}\right)$ & $0$  & $-4$  & $-4$  \\
\hline   
\padedvphantom{I}{2ex}{2ex}
Zagier's Exam.\ 6 & $\left(\begin{smallmatrix}
    3 & 2 & 1 \\ 2 & 2 & 1 \\ 1 & 1 & 1
\end{smallmatrix}\right)$ & $-4$ & $-1$ & $-3$ \\
\hline   
\padedvphantom{I}{2ex}{2ex}
Zagier's Exam.\ 7 & $\left(\begin{smallmatrix}  2 & 1 & 1 \\  1 & 2 & 0 \\  1 & 0 & 2  \end{smallmatrix}\right)$  & 4 & $-3$ & $-3$\\
\hline  
\padedvphantom{I}{2ex}{2ex}
Zagier's Exam.\ 8 & $\left(\begin{smallmatrix}
            4 & 2 & 1 \\ 2 & 2 & 0 \\ 1 & 0 & 1
        \end{smallmatrix}\right)$ & 1 & $-6$  & $-5$\\
\hline  
\padedvphantom{I}{2ex}{2ex}
Zagier's Exam.\ 9 & $\left(\begin{smallmatrix} 6 & 4 & 2 \\ 4 & 4 & 2 \\ 2 & 2 & 2 \end{smallmatrix}\right)$ & $-8$ & 4 & $-4$ \\
\hline  
\padedvphantom{I}{2ex}{2ex}
Zagier's Exam.\ 10 & $\left(\begin{smallmatrix}
    4 & 2 & 2 \\ 2 & 2 & 1 \\ 2 & 1 & 2
\end{smallmatrix}\right)$ & $0$ & $-3$ & $-3$ \\
\hline   
\padedvphantom{I}{2ex}{2ex}
Zagier's Exam.\ 11 & $\left(\begin{smallmatrix} 4 & 2 & -1 \\ 2 & 2 & -1 \\ -1 & -1 & 1 \end{smallmatrix}\right)$ & 1 & $-1$ & $-2$ \\
\hline  
\padedvphantom{I}{2ex}{2ex}
Zagier's Exam.\ 12 & $\left(\begin{smallmatrix} 8 & 4 & 1 \\ 4 & 3 & 0 \\ 1 & 0 & 1 \end{smallmatrix}\right)$ & 3 & $-7$ & $-8$ \\
\hline   
\padedvphantom{I}{2ex}{2ex}
\makecell{CW Exam.\ 1\\ \cite[(6.31), Table 2]{Cao-Wang2024}}& $\left(\begin{smallmatrix} 1 & 0 & -1/2 \\  0 & 1 & -1/2 \\ -1/2 & -1/2 & 1 \end{smallmatrix}\right)$ & $1/4$  & $1/4$ & 0  \\
\hline   
\padedvphantom{I}{2ex}{2ex}
\makecell{CW Exam.\ 2 \\ \cite[(4.3)]{Cao-Wang2024}} & $\left(\begin{smallmatrix} 2 & -1 & -1 \\ -1 & 2 & 0 \\ -1 & 0 & 2 \end{smallmatrix}\right)$ & 4 & 5   & 5 \\
\hline  
\padedvphantom{I}{2ex}{2ex}
\makecell{CW Exam.\ 3 \\ \cite[(1.15)]{Cao-Wang2024}}  &  $\left(\begin{smallmatrix} 3/2 & -1/2 & -1/2  \\  -1/2 & 3/2 & -1/2  \\ -1/2 & -1/2 & 3/2  \\  \end{smallmatrix}\right)$ & 2 & 2 & 2 \\
\hline
\end{tabular}
\end{table}
}

As can be seen from Table \ref{tab-lift}, the lift of Zagier's Examples 4, 5, 6, 10 are no longer positive definite matrices and considering their dual does not make sense. It remains to consider Zagier's Examples 7, 8, 9, 11 and 12 as well as the CW Examples 1--3. Altogether, they generate nine different sets of rank four Nahm sums corresponding to nine different rank four matrices. Here we regard two Nahm sums as the same if we can obtain one from the other by interchanging some of the summation indices.   Specifically speaking, the CW Example 2 generates two different sets of Nahm sums since the matrix lifted via $\mathcal{L}_2$ can be transformed to the matrix lifted via $\mathcal{L}_3$ by interchanging the second and third rows/columns. In contrast, the other seven examples all generate one set of rank four Nahm sums. We are able to prove the modularity of four of them. Namely, by establishing the corresponding Rogers--Ramanujan type identities, we confirm modularity of the lift-dual of Zagier's Example 7, the CW Examples 1 and 3 and the lift-dual triple generated using $\mathcal{L}_1$ from the CW Example 2. Among them the lift-dual of the CW Example 1 is particularly complicated, and we will prove some identities such as (see Theorem \ref{CW-thm6.2-ij3-in})
\begin{align}
& \sum_{i,j,k,l\geq 0} \frac{q^{2i^2+2j^2+2k^2+2l^2-2ij-2ik-2il+2jk+2jl-i+k-l}}{(q^2;q^2)_i(q^2;q^2)_j(q^2;q^2)_k(q^2;q^2)_l}=2\frac{J_2^2}{J_1^2}, \label{intro-thm3-3} \\
&    \sum_{i,j,k,l\geq 0} \frac{q^{i^2+j^2+k^2+l^2-ij-ik-il+jk+jl+j+l}}{(q;q)_i(q;q)_j(q;q)_k(q;q)_l}=\frac{1}{3}\frac{J_{6}^{2}}{J_{1}^{2}J_{12}}\Big(\frac{J_{2}^{5}J_{2,12}}{J_{1}^{2}J_{4}^{2}J_{1,6}}+\frac{2J_{4}^{3}}{J_{2}^{2}} \Big), \label{intro-thm3-10}\\
   &\sum_{i,j,k,l\geq 0} \frac{q^{i^2+j^2+k^2+l^2-ij-ik-il+jk+jl+i-k}}{(q;q)_i(q;q)_j(q;q)_k(q;q)_l}=
   \frac{2}{3}\frac{J_{6}^{2}}{J_{1}^{2}J_{12}}\Big(\frac{J_{2}^{5}J_{2,12}}{J_{1}^{2}J_{4}^{2}J_{1,6}}+\frac{2J_{4}^{3}}{J_{2}^{2}} \Big), \label{intro-thm3-11} \\
  & \sum_{i,j,k,l\geq 0} \frac{q^{i^2+j^2+k^2+l^2-ij-ik-il+jk+jl+i-j-k-l}}{(q;q)_i(q;q)_j(q;q)_k(q;q)_l}=
   6\frac{J_3^3}{J_1^3}. \label{intro-thm3-12}
\end{align}
The proof of \eqref{intro-thm3-3}--\eqref{intro-thm3-12} requires intricate use of the constant term method applied to two different variables. Moreover, the proofs of \eqref{intro-thm3-10}--\eqref{intro-thm3-12} also rely on the method of Bailey pairs. This already shows that proving the modularity of the Nahm sums generated by the lift-dual approach can sometimes be quite complicated and difficult. At this stage, we are not able to prove the modularity of the lift-dual of Zagier's Example 8, 9, 11 and 12 as well as the lift-dual triples generated using $\mathcal{L}_2$ (equivalently $\mathcal{L}_3$) from the CW Example 2. We present several conjectural formulas for some particular cases in these conjectural examples. In particular, we find three new (conjectural) modular cases for rank four Nahm sums associated with the tadpole Cartan matrix besides those discussed by Shi and Wang \cite{Shi-Wang}.

The rest of this paper is organized as follows. In Section \ref{sec-pre} we collect some auxiliary identities and review some basic knowledge of Bailey pairs. These will be employed in establishing the Rogers--Ramanujan type identities for our rank four Nahm sums. In Sections \ref{sec-exam} and \ref{sec-CW} we discuss modularity of the rank four Nahm sums generated from the rank three examples of Zagier \cite{Zagier} and the authors \cite{Cao-Wang2024}, respectively.

\section{Preliminaries}\label{sec-pre}

We first recall some basic identities from the theory of $q$-series. The $q$-binomial theorem \cite[Theorem 2.1]{Andrews1998} asserts that
\begin{align}\label{q-binomial}
\sum_{n=0}^\infty \frac{(a;q)_n}{(q;q)_n}z^n=\frac{(az;q)_\infty}{(z;q)_\infty}.
\end{align}
As its consequence, we have Euler's $q$-exponential identities \cite[Corollary 2.2]{Andrews1998}
\begin{align}
\sum_{n=0}^\infty \frac{z^n}{(q;q)_n}&=\frac{1}{(z;q)_\infty}, \quad |z|<1, \label{Euler1} \\
\sum_{n=0}^\infty \frac{z^nq^{\frac{n^2-n}{2}}}{(q;q)_n}&=(-z;q)_\infty.  \label{Euler2}
\end{align}

The Jacobi triple product identity \cite[Theorem 2.8]{Andrews1998} states that
\begin{align}\label{Jacobi}
(z,q/z,q;q)_\infty=
\sum_{n=-\infty}^{{\infty}} (-z)^n q^{\frac{n^2-n}{2}}.
\end{align}

The basic hypergeometric series ${}_r\phi_s$ is defined as:
$${}_r\phi_s\bigg(\genfrac{}{}{0pt}{} {a_1,\dots,a_r}{b_1,\dots,b_s};q,z  \bigg):=\sum_{n=0}^\infty \frac{(a_1,\dots,a_r;q)_n}{(q,b_1,\dots,b_s;q)_n}\Big((-1)^nq^{\binom{n}{2}} \Big)^{1+s-r}z^n.$$
Occasionally we use the following without mention to simplify some $q$-products:
\begin{align}
&(aq^{-n};q)_n=(q/a;q)_n(-a/q)^nq^{-\frac{n^2-n}{2}}.
\end{align}

We recall several ${}_r\phi_s$ formulas that will be used in our proofs. The $q$-Gauss summation formula \cite[(1.5.1)]{Gasper-Rahman} states that
\begin{align} \label{Gauss}
{}_2\phi_1\bigg(\genfrac{}{}{0pt}{} {a,b}{c};q,\frac{c}{ab}  \bigg)=\frac{(c/a,c/b;q)_\infty}{(c,c/ab;q)_\infty},  \quad \left| \frac{c}{ab} \right|<1.
\end{align}
As a $q$-analogue of Bailey's ${}_2F_{1}(-1)$ sum  we have \cite[\uppercase\expandafter{\romannumeral2}.10]{Gasper-Rahman}
\begin{align} \label{Bailey's}
{}_2\phi_2\bigg(\genfrac{}{}{0pt}{} {a,q/a}{-q,b};q,-b  \bigg)=\frac{(ab,bq/a;q^2)_\infty}{(b;q)_\infty}.
\end{align}
A $q$-analogue of Gauss' ${}_2F_{1}(-1)$ sum is given by  \cite[\uppercase\expandafter{\romannumeral2}.11]{Gasper-Rahman}
\begin{align} \label{Gauss'}
{}_2\phi_2\bigg(\genfrac{}{}{0pt}{} {a^2,b^2}{abq^{1/2},-abq^{1/2}};q,-q  \bigg)=\frac{(a^2q,b^2q;q^2)_\infty}{(q,a^2b^2q;q^2)_\infty}.
\end{align}

Slater \cite{Slater} provided a famous list containing 130 Rogers--Ramanujan type identities, and some of them will be employed in our proofs. For convenience, we use the label (S.~$n$) to denote the $n$-th identity in Slater's list \cite{Slater}. Here are the single sum Rogers--Ramanujan type identities we need:
\begin{align}
&\sum_{n\geq 0} \frac{q^{2n^2+n}}{(q;q)_{2n+1}}=\frac{J_{2}}{J_{1}}, \quad \text{(S.\ 9)} \label{S. 9}  \\
&\sum_{n\geq 0} \frac{q^{n^2+n}(-q^2;q^2)_n}{(q;q)_{2n+1}}=\frac{J_4J_{6}^{2}J_{2,12}}{J_{2}^{2}J_{1,6}J_{12}}, \quad \text{(S.\ 28)} \label{S. 28}\\
&\sum_{n\geq 0} \frac{q^{n^2}(-q;q^2)_n}{(q;q)_{2n}}=\frac{J_{6}^{2}}{J_{1}J_{12}}, \quad \text{(S. 29)} \label{S. 29}\\
&\sum_{n\geq 0} \frac{q^{2n(n+1)}}{(q^2;q^2)_n(-q;q)_{2n+1}}=\frac{J_{1,7}}{J_2}, \quad \text{(Rogers \cite[p.\ 331 (6)]{Rogers1937}; S.\ 31)} \label{S31} \\
&\sum_{n\geq 0} \frac{q^{2n(n+1)}}{(q^2;q^2)_n(-q;q)_{2n}}=\frac{J_{2,7}}{J_2}, \quad \text{(Rogers \cite[p.\ 342]{Rogers}; S.\ 32)}  \label{S32}\\
&\sum_{n\geq 0} \frac{q^{2n^2}}{(q^2;q^2)_n(-q;q)_{2n}}=\frac{J_{3,7}}{J_2}, \quad \text{(Rogers \cite[p.\ 339]{Rogers}; S.\ 33)}  \label{S33} \\
&\sum_{n\geq 0} \frac{q^{2n^2+2n}}{(q;q)_{2n+1}}=\frac{J_{3,8}J_{2,16}}{J_{1}J_{16}}, \quad \text{(S.\ 38)} \label{S. 38}\\
&\sum_{n\geq 0} \frac{q^{2n^2}}{(q;q)_{2n}}=\frac{J_{1,8}J_{6,16}}{J_{1}J_{16}}, \quad \text{(S.\ 39)} \label{S. 39}\\
&\sum_{n\geq 0} \frac{q^{(3n^2-n)/2}(-q;q)_n}{(q;q)_{2n}}=\frac{q^{(3n^2-n)/2}}{(q;q^2)_{n}(q;q)_n}=\frac{J_{4,10}}{J_1}, \quad \text{(S.\ 46)} \label{S. 46}\\
&\sum_{n\geq 0} \frac{q^{n^2+n}(-1;q^2)_n}{(q;q)_{2n}}=\frac{J_{5,12}-qJ_{1,12}}{J_1}, \quad \text{(S.\ 48)} \label{S. 48}\\
&\sum_{n\geq 0} \frac{q^{n^2+2n}(-q;q^2)_n}{(q;q)_{2n+1}}=\frac{J_{2,12}}{J_1}, \quad \text{(S.\ 50)} \label{S. 50}\\
&\sum_{n\geq 0} \frac{q^{n(n+2)}}{(q;q^2)_{n+1}(q;q)_n}=\frac{J_{2,14}}{J_1}, \quad \text{(Rogers \cite[p.\ 329 (1)]{Rogers1937}; S.\ 59)}  \label{S59} \\
&\sum_{n=0}^\infty \frac{q^{n(n+1)}}{(q;q^2)_{n+1}(q;q)_n}=\frac{J_{4,14}}{J_1}, \quad \text{(Rogers \cite[p.\ 329 (1)]{Rogers1937}; S.\ 60)}  \label{S60} \\
&\sum_{n=0}^\infty \frac{q^{n^2}}{(q;q^2)_n(q;q)_n}=\frac{J_{6,14}}{J_1}, \quad \text{(Rogers \cite[p.\ 341, Ex.\ 2]{Rogers}; S.\ 61)} \label{S61}\\
&\sum_{n\geq 0} \frac{q^{(3n^2+n)/2}(-q;q)_n}{(q;q)_{2n+1}}=\frac{J_{4,10}}{J_1}, \quad \text{(S.\ 62)} \label{S. 62}\\
&\sum_{n\geq 0} \frac{q^{3(n^2+n)/2}(-q;q)_n}{(q;q)_{2n+1}}=\frac{J_{2,10}}{J_1}, \quad \text{(S.\ 63)} \label{S. 63}\\
&\sum_{n\geq 0} \frac{q^{2n^2-n}}{(q;q)_{2n}}=\frac{J_{2}}{J_{1}}, \quad \text{(S.\ 85)} \label{S. 85}\\
&\sum_{n\geq 0} \frac{q^{3n^2-2n}(-q;q^2)_n}{(q^2;q^2)_{2n}}=\frac{J_{2}J_{3,10}J_{4,20}}{J_{1}J_{4}J_{20}}, \quad \text{(S.\ 95)} \label{S. 95}\\
&\sum_{n\geq 0} \frac{q^{3n^2+2n}(-q;q^2)_{n+1}}{(q^2;q^2)_{2n+1}}=\frac{J_{2}J_{3,10}J_{4,20}}{J_{1}J_{4}J_{20}}, \quad \text{(\cite[(A.\ 97)]{Sills})} \label{A. 97}\\
&\sum_{n\geq 0} \frac{q^{3n^2}(-q;q^2)_n}{(q^2;q^2)_{2n}}=\frac{J_{2}J_{1,10}J_{8,20}}{J_{1}J_{4}J_{20}}. \quad \text{(\cite[(A.\ 100)]{Sills})} \label{A. 100}
\end{align}

Similar to the works \cite{Wang-rank3,Cao-Wang2024}, we also need Bailey pairs to prove some identities we find. A pair of sequences $(\alpha_n(a;q),\beta_n(a;q))$ is called a Bailey pair relative to $a$ if for all $n\geq 0$,
 \begin{align}\label{defn-BP}
     \beta_n(a;q)=\sum_{k=0}^n\frac{\alpha_k(a;q)}{(q;q)_{n-k}(aq;q)_{n+k}}.
 \end{align}

\begin{lemma}[Bailey's lemma]
Suppose that $(\alpha_{n}(a;q),\beta_{n}(a;q))$ is a Bailey pair relative to $a$. Then $(\alpha_{n}'(a;q),\beta_{n}'(a;q))$ is another Bailey pair relative to $a$, where
\begin{align}\label{Bailey's lemma}
&\alpha_{n}'(a;q)=\frac{(\rho_{1},\rho_{2};q)_{n}}{(aq/\rho_{1},aq/\rho_{2};q)_{n}}\left( \frac{aq}{\rho_{1}\rho_{2}}\right)^{n}\alpha_{n}(a;q),  \\
&\beta_{n}'(a;q)=\sum_{k=0}^{n}\frac{(\rho_{1},\rho_{2};q)_{k}(aq/\rho_{1}\rho_{2};q)_{n-k}}{(aq/\rho_{1},aq/\rho_{2};q)_{n}(q;q)_{n-k}}\left( \frac{aq}{\rho_{1}\rho_{2}}\right)^{k}\beta_{k}(a;q).
\end{align}
\end{lemma}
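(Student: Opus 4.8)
The plan is to verify directly that the pair $(\alpha_n'(a;q),\beta_n'(a;q))$ satisfies the defining relation \eqref{defn-BP} of a Bailey pair relative to $a$, that is, to establish
\begin{align*}
\beta_n'(a;q)=\sum_{j=0}^{n}\frac{\alpha_j'(a;q)}{(q;q)_{n-j}(aq;q)_{n+j}}.
\end{align*}
First I would start from the prescribed expression for $\beta_n'(a;q)$ and substitute the hypothesis that $(\alpha_n,\beta_n)$ is a Bailey pair, i.e.\ $\beta_k(a;q)=\sum_{j=0}^{k}\alpha_j(a;q)/\big((q;q)_{k-j}(aq;q)_{k+j}\big)$. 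This turns $\beta_n'(a;q)$ into a double sum over $0\le j\le k\le n$. Interchanging the order of summation so that $j$ is the outer index (with $k$ ranging from $j$ to $n$) and factoring out everything depending only on $j$ — in particular $\alpha_j(a;q)$ — reduces the whole problem to evaluating the remaining inner sum over $k$ in closed form.

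After the shift $k\mapsto j+k$ and routine $q$-Pochhammer manipulations (splitting $(\rho_i;q)_{j+k}=(\rho_i;q)_j(\rho_i q^j;q)_k$ and $(aq;q)_{2j+k}=(aq;q)_{2j}(aq^{2j+1};q)_k$, and rewriting $(aq/\rho_1\rho_2;q)_{n-j-k}$ and $(q;q)_{n-j-k}$ via the reflection $(x;q)_{N-k}$), the inner sum becomes a terminating, \emph{balanced} ${}_3\phi_2$ series of length $n-j$ with numerator parameters $q^{-(n-j)},\rho_1 q^j,\rho_2 q^j$. I would then invoke the terminating $q$-Pfaff--Saalsch\"utz summation — the classical companion of the $q$-Gauss summation \eqref{Gauss}, see e.g.\ \cite{Gasper-Rahman,Andrews1998} — to collapse this inner sum into a single quotient of $q$-Pochhammer symbols. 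Reassembling that quotient with the extracted $j$-prefactor $(\rho_1,\rho_2;q)_j(aq/\rho_1\rho_2)^j\alpha_j(a;q)$ and with the global denominator $(aq/\rho_1,aq/\rho_2;q)_n$, I expect everything to consolidate exactly into $\alpha_j'(a;q)/\big((q;q)_{n-j}(aq;q)_{n+j}\big)$; summing over $j$ then yields the desired identity and hence that $(\alpha_n'(a;q),\beta_n'(a;q))$ is a Bailey pair relative to $a$.

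The main obstacle is entirely bookkeeping: one must track the $q$-powers carefully enough to recognize the inner sum as Saalsch\"utzian (so that $q$-Pfaff--Saalsch\"utz, rather than a nonsummable ${}_3\phi_2$, actually applies) and then check that after the summation no stray powers of $q$ survive, so that the factors genuinely telescope into $\alpha_j'(a;q)$. There is no conceptual difficulty beyond this. Since the $q$-Pfaff--Saalsch\"utz summation is not among the formulas recorded in Section \ref{sec-pre}, I would either add it to the preliminaries or simply cite it as standard; alternatively one can avoid it by applying the $q$-Chu--Vandermonde summation twice, but the single application above is the cleanest route.
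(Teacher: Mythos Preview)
Your proposal outlines the standard proof of Bailey's lemma via the $q$-Pfaff--Saalsch\"utz summation, and the argument is correct. However, there is nothing to compare against: the paper does not prove this lemma at all. It is stated as a classical, named result (``Bailey's lemma'') in the preliminaries section and simply used as a tool; no proof or even proof sketch is given.
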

Equivalently, if $(\alpha_{n}(a;q), \beta_{n}(a;q))$ is a Bailey pair, then
\begin{align}\label{Bailey's lemma-1}
&\frac{1}{(aq/\rho_{1},aq/\rho_{2};q)_{n}}\sum_{k=0}^{n}\frac{(\rho_{1},\rho_{2};q)_{k}(aq/\rho_{1}\rho_{2};q)_{n-k}}{(q;q)_{n-k}}\left( \frac{aq} {\rho_{1}\rho_{2}}\right)^{k}\beta_{k}(a;q) \nonumber\\
&=\sum_{r=0}^{n}\frac{(\rho_{1},\rho_{2};q)_{r}}{(q;q)_{n-r}(aq;q)_{n+r}(aq/\rho_{1},aq/\rho_{2};q)_{r}}\left( \frac{aq}{\rho_{1}\rho_{2}}\right)^{r}\alpha_{r}(a;q).
\end{align}

Letting $\rho_{1},\rho_{2}\rightarrow\infty$, we obtain the Bailey pair \cite[Eq. (S1)]{Bressoud2000}:
\begin{align}\label{Bailey's lemma-1.1}
\alpha_{n}'(a;q)=a^nq^{n^2}\alpha_{n}(a;q), \quad
\beta_{n}'(a;q)=\sum_{r=0}^{n}\frac{a^rq^{r^2}}{(q;q)_{n-r}}\beta_{r}(a;q).
\end{align}
If we further let $n\rightarrow\infty$, then we deduce from \eqref{Bailey's lemma-1} that
\begin{align}\label{Bailey's lemma-1.2}
\sum_{n=0}^{\infty}a^nq^{n^2}\beta_{n}(a;q)=\frac{1}{(aq;q)_{\infty}}\sum_{n=0}^{\infty}a^nq^{n^2}\alpha_{n}(a;q).
\end{align}

The following result of Lovejoy \cite[p.~1510]{Lovejoy2004} allows us to obtain a Bailey pair relative to $aq$ from a Bailey pair relative to $a$.
\begin{lemma}
If $(\alpha_{n}(a;q), \beta_{n}(a;q))$ is a Bailey pair relative to $a$, then  $(\alpha_{n}', \beta_{n}')$ is a Bailey pair relative to $aq$ where
\begin{align}\label{Lovejoy-a-aq}
\alpha_{n}'(aq;q)&=\frac{(1-aq^{2n+1})(aq/b)_{n}(-b)^{n}q^{n(n-1)/2}}{(1-aq)(bq;q)_{n}}\sum_{r=0}^{n}\frac{(b;q)_{r}}{(aq/b;q)_{r}}\nonumber\\
&\quad\times (-b)^{-r}q^{-r(r-1)/2}\alpha_{r}(a;q), \\
\beta_{n}'(aq;q)&=\frac{(b;q)_{n}}{(bq;q)_{n}}\beta_{n}(a;q). \nonumber
\end{align}
\end{lemma}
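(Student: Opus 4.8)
The plan is to verify directly that $(\alpha_n'(aq;q),\beta_n'(aq;q))$ satisfies the defining relation \eqref{defn-BP} relative to $aq$; that is, we must show
$$\frac{(b;q)_n}{(bq;q)_n}\,\beta_n(a;q)=\sum_{j=0}^n\frac{\alpha_j'(aq;q)}{(q;q)_{n-j}(aq^2;q)_{n+j}}\qquad(n\ge 0).$$
On the left I would substitute the Bailey pair relation \eqref{defn-BP} for $(\alpha_n(a;q),\beta_n(a;q))$ relative to $a$, while on the right I would substitute the explicit formula \eqref{Lovejoy-a-aq} for $\alpha_j'(aq;q)$, which is itself a finite sum over an index $r\le j$. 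Interchanging $\sum_{j=0}^n\sum_{r=0}^j=\sum_{r=0}^n\sum_{j=r}^n$ on the right and comparing the coefficient of $\alpha_r(a;q)$ on the two sides, the lemma is reduced to the identity
$$\frac{(b;q)_r\,(-b)^{-r}q^{-r(r-1)/2}}{(1-aq)(aq/b;q)_r}\sum_{j=r}^n\frac{(1-aq^{2j+1})(aq/b;q)_j(-b)^jq^{j(j-1)/2}}{(q;q)_{n-j}(aq^2;q)_{n+j}(bq;q)_j}=\frac{(b;q)_n}{(bq;q)_n(q;q)_{n-r}(aq;q)_{n+r}},$$
which no longer involves the Bailey pair.

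To establish this, I would shift the summation index $j\mapsto r+j$ and factor the relevant $q$-Pochhammer symbols, using $(aq^2;q)_{n+r+j}=(aq^2;q)_{n+r}(aq^{n+r+2};q)_j$, the analogous factorizations of $(aq/b;q)_{r+j}$ and $(bq;q)_{r+j}$, and the reflection
$$\frac{1}{(q;q)_{n-r-j}}=\frac{(q^{-(n-r)};q)_j\,(-1)^jq^{(n-r)j-j(j-1)/2}}{(q;q)_{n-r}},$$
which comes from the relation $(aq^{-n};q)_n=(q/a;q)_n(-a/q)^nq^{-(n^2-n)/2}$ recorded above. After the ensuing cancellations, and writing $c:=aq^{2r+1}$, $e:=bq^r$, the inner sum collapses to the terminating series
$$\sum_{j\ge 0}(1-cq^{2j})\,\frac{(c/e;q)_j\,(q^{-(n-r)};q)_j}{(eq;q)_j\,(cq^{\,n-r+1};q)_j}\,(eq^{\,n-r})^j.$$
The key point is that this equals $(1-c)$ times the very-well-poised series
$${}_6\phi_5\bigg(\genfrac{}{}{0pt}{}{c,\ qc^{1/2},\ -qc^{1/2},\ c/e,\ q^{-(n-r)},\ q}{c^{1/2},\ -c^{1/2},\ eq,\ cq^{\,n-r+1},\ c};q,\ eq^{\,n-r}\bigg),$$
the degenerate upper parameter $q$ being exactly what cancels the factor $(c;q)_j/(q;q)_j$ carried by a generic very-well-poised term. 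Its argument $cq\big/\big((c/e)\cdot q^{-(n-r)}\cdot q\big)=eq^{\,n-r}$ has the balanced shape required by the ${}_6\phi_5$ summation theorem \cite[(II.20)]{Gasper-Rahman} (and the factor $q^{-(n-r)}$ makes the sum terminate), so that theorem evaluates it as a ratio of infinite products which telescopes to $\frac{(1-e)(1-cq^{\,n-r})}{(1-c)(1-eq^{\,n-r})}$. Substituting this value back and rewriting $c=aq^{2r+1}$, $e=bq^r$ turns the left side of the reduced identity into $\frac{(b;q)_r(1-bq^r)}{(bq;q)_r(1-bq^n)(q;q)_{n-r}(aq;q)_{n+r}}$, and elementary manipulations with $q$-Pochhammer symbols (e.g.\ $(b;q)_r(1-bq^r)=(b;q)_{r+1}=(1-b)(bq;q)_r$ together with $(b;q)_n/(bq;q)_n=(1-b)/(1-bq^n)$) identify this with the right side, completing the proof.

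I expect the main obstacle to be organizational rather than conceptual: one must carry the index shift and the several $q$-Pochhammer factorizations through without sign or exponent slips, and one must recognize the reduced inner sum as a ${}_6\phi_5$ with the ``trivial'' parameter $q$, so that the classical summation applies verbatim. Once the series has been cast in that form, its evaluation and the final simplification are routine.
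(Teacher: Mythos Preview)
The paper does not prove this lemma; it merely quotes it as a result of Lovejoy \cite[p.~1510]{Lovejoy2004}. Your direct verification is correct and complete: reducing to the coefficient of $\alpha_r(a;q)$, shifting $j\mapsto r+j$, and recognizing the resulting sum as a degenerate very-well-poised ${}_6\phi_5$ (the upper parameter $q$ cancelling the factor $(c;q)_j/(q;q)_j$) is exactly the right mechanism, and the summation formula evaluates it to the claimed product. One cosmetic remark: the terminating case you actually invoke is usually labelled (II.21) rather than (II.20) in Gasper--Rahman, but the two are equivalent here.
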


In particular, when $b\rightarrow 0$ we obtain the Bailey pair:
\begin{equation}\label{Lovejoy-a-aq-1}
\begin{split}
\alpha_{n}'(aq;q)&=\frac{(1-aq^{2n+1})a^{n}q^{n^2}}{1-aq}\sum_{r=0}^{n}a^{-r}q^{-r^2}\alpha_{r}(a;q),  \\
\beta_{n}'(aq;q)&=\beta_{n}(a;q).
\end{split}
\end{equation}

When $b\rightarrow \infty$ we obtain the Bailey pair:
\begin{equation}\label{W-BP-a-aq}
    \begin{split}
\alpha_n'(aq;q)=\frac{1-aq^{2n+1}}{1-aq}q^{-n}\sum_{r=0}^n \alpha_r(a;q), \quad \beta_n'(aq;q)=q^{-n}\beta_n(a;q).
    \end{split}
\end{equation}

The following result of Mc Laughlin \cite{McLaughlin2018} allows us to obtain a Bailey pair relative to $a/q$ from a Bailey pair relative to $a$.
\begin{lemma}
If $(\alpha_{n}(a;q), \beta_{n}(a;q))$ is a Bailey pair relative to $a$, then  $(\alpha_{n}', \beta_{n}')$ is a Bailey pair relative to $a/q$ where
\begin{align}\label{McLaughlin-a-a/q}
\alpha_{0}'(a/q;q)&=\alpha_{0}(a;q), \quad \alpha_{n}'(a/q;q)=(1-a)\left(\frac{\alpha_{n}(a;q)}{1-aq^{2n}}-\frac{aq^{2n-2}\alpha_{n-1}(a;q)}{1-aq^{2n-2}} \right), \nonumber\\
\beta_{n}'(a/q;q)&=\beta_{n}(a;q).
\end{align}
\end{lemma}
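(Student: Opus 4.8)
The plan is to verify directly that the pair $(\alpha_n',\beta_n')$ defined in \eqref{McLaughlin-a-a/q} satisfies the defining relation \eqref{defn-BP} relative to $a/q$. Since $(a/q)\cdot q=a$, this amounts to showing
\begin{align*}
\beta_n'(a/q;q)=\sum_{k=0}^n\frac{\alpha_k'(a/q;q)}{(q;q)_{n-k}(a;q)_{n+k}}.
\end{align*}
As $\beta_n'(a/q;q)=\beta_n(a;q)$ and $(\alpha_n(a;q),\beta_n(a;q))$ is a Bailey pair relative to $a$, substituting $\beta_n(a;q)=\sum_{k=0}^n\alpha_k(a;q)/\big((q;q)_{n-k}(aq;q)_{n+k}\big)$ reduces the claim to the purely algebraic identity
\begin{align*}
\sum_{k=0}^n\frac{\alpha_k(a;q)}{(q;q)_{n-k}(aq;q)_{n+k}}=\sum_{k=0}^n\frac{\alpha_k'(a/q;q)}{(q;q)_{n-k}(a;q)_{n+k}},
\end{align*}
which I would prove by comparing the coefficient of each $\alpha_k(a;q)$ on the two sides.

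For the left-hand side, the relation $(a;q)_{n+k+1}=(1-a)(aq;q)_{n+k}$ rewrites it as $\sum_{k=0}^n(1-a)\alpha_k(a;q)\,\big/\,\big((q;q)_{n-k}(a;q)_{n+k+1}\big)$, so the coefficient of $\alpha_k(a;q)$ there is $(1-a)\big/\big((q;q)_{n-k}(a;q)_{n+k+1}\big)$. For the right-hand side I would insert \eqref{McLaughlin-a-a/q}, split off the $k=0$ term $\alpha_0(a;q)/\big((q;q)_n(a;q)_n\big)$, and break each remaining term ($1\le k\le n$) into the piece carrying $\alpha_k(a;q)$ and the piece carrying $\alpha_{k-1}(a;q)$; reindexing the latter by $k\mapsto k+1$ regroups the whole sum by the subscript of $\alpha$. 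For $1\le k\le n-1$ this turns the coefficient of $\alpha_k(a;q)$ into
\begin{align*}
\frac{1-a}{1-aq^{2k}}\left(\frac{1}{(q;q)_{n-k}(a;q)_{n+k}}-\frac{aq^{2k}}{(q;q)_{n-k-1}(a;q)_{n+k+1}}\right).
\end{align*}

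The crux is then the elementary simplification
\begin{align*}
\frac{1}{(q;q)_{n-k}(a;q)_{n+k}}-\frac{aq^{2k}}{(q;q)_{n-k-1}(a;q)_{n+k+1}}=\frac{1-aq^{2k}}{(q;q)_{n-k}(a;q)_{n+k+1}},
\end{align*}
which follows, after clearing denominators, from the factorization $(1-aq^{n+k})-aq^{2k}(1-q^{n-k})=1-aq^{2k}$. The factor $1-aq^{2k}$ cancels and the coefficient collapses to $(1-a)\big/\big((q;q)_{n-k}(a;q)_{n+k+1}\big)$, matching the left-hand side. The two boundary cases are handled by the same one-line computation: the coefficient of $\alpha_n(a;q)$ on the right comes only from the $k=n$ term and equals $(1-a)/(a;q)_{2n+1}$, while the coefficient of $\alpha_0(a;q)$ collects the explicit $k=0$ term together with the $\alpha_0$-part of the $k=1$ term, giving $\big((q;q)_n(a;q)_n\big)^{-1}-a\big((q;q)_{n-1}(a;q)_{n+1}\big)^{-1}=(1-a)\big/\big((q;q)_n(a;q)_{n+1}\big)$ via $(1-aq^n)-a(1-q^n)=1-a$. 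This proves the algebraic identity, and hence the lemma.

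I do not expect a genuine obstacle here: the argument is a routine verification. The only points demanding care are the bookkeeping of the index shift $k\mapsto k+1$, the separate treatment of the endpoints $k=0$ and $k=n$, and the rational-function identity above which supplies the telescoping-type cancellation of $1-aq^{2k}$. (One could instead invert the Bailey relation to solve directly for $\alpha_n'(a/q;q)$ from $\beta_n'(a/q;q)=\beta_n(a;q)$, but the forward comparison above appears to be the most transparent route.)
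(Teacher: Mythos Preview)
Your verification is correct and complete: the coefficient comparison works exactly as you describe, the key rational-function identity $(1-aq^{n+k})-aq^{2k}(1-q^{n-k})=1-aq^{2k}$ is right, and the two boundary cases $k=0$ and $k=n$ are handled correctly.

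Note, however, that the paper does not actually prove this lemma. It is stated as a known result of Mc~Laughlin with a citation to \cite{McLaughlin2018}, and no argument is given in the paper itself. So there is no ``paper's own proof'' to compare against; your direct verification via the defining relation \eqref{defn-BP} is a perfectly standard and self-contained way to establish it, and would serve as a complete proof were one required.
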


Another key approach in our proof of identities is the constant term method. For any series
$$f(z_1,z_2,\dots,z_k)=\sum_{n_1,n_2,\dots,n_k\in \mathbb{Z}}a_{n_1n_2\dots n_k}z_1^{n_1}z_2^{n_2}\cdots z_k^{n_k},$$
we define the constant term extractor with respect to the variables $z_1,z_2,\dots,z_k$ as
$$\CT_{z_1,z_2,\dots,z_k}f(z_1,z_2,\dots,z_k)=a_{00\dots0}.$$
It is easy to see that for any nonzero integers $e_1,e_2,\dots,e_k$ we have
\begin{align}
\CT_{z_1,z_2,\dots,z_k}f(z_1,z_2,\dots,z_k)=\CT_{z_1,z_2,\dots,z_k}f(z_1^{e_1},z_2^{e_2},\dots,z_k^{e_k}).
\end{align}

Finally, we recall the strategy explained in \cite{Wang-rank2,Wang-rank3} to justify modularity of Nahm sums. By establishing Rogers--Ramanujan type identities, we will express a Nahm sum $f_{A,B,C}(q)$ as
\begin{align}
    f_{A,B,C}(q)=q^Cf_{A,B,0}(q)=q^C(f_1(q)+f_2(q)+\cdots+f_n(q)).
\end{align}
Here $f_i(q)$ ($i=1,2,\dots,n$) are infinite products expressed by $J_m$ and $J_{a,m}$ defined in \eqref{J-defn}. It is known that both $q^{m/24}J_{m}$ and $q^{m/24+mP_{2}(a/m)/2}J_{a,m}$ are modular forms of weight 1/2, where $P_{2}(x)=x^2-x+1/6$. Hence it is easy to find the unique value $C_i$ so that $q^{C_i}f_{i}(q)$ is modular. When $C_1=C_2=\cdots=C_n=C$ and the functions $f_i(q)$ $(i=1,2,\dots,n$) all share the same weight $k$, then $f_{A,B,C}(q)$ is modular of weight $k$. Since this process is easy and routine, we will not mention the calculations of the values $C$.

\section{Lift-dual of Zagier's rank three examples}\label{sec-exam}

\subsection{Lift-dual of Zagier's Example 7}
 We list the modular triples $(A,B,C)$ in Zagier's Example 7 (see \cite[Table 3]{Zagier} or \cite[Example 7]{Wang-rank3}) and their lift and lift-dual in Table \ref{Wang-3-Exam7}. From Table \ref{tab-lift} we see that only the lift via $\mathcal{L}_1$ is meaningful, and we checked that $\mathcal{L}_1(A)$ is indeed positive definite.
{\small
\begin{table}[htbp]
\centering
\caption{Modular triples $(A,B,C)$ in Zagier's Example 7 and its lift and lift-dual via $\mathcal{L}_1$}
    \label{Wang-3-Exam7}
    \begin{tabular}{|c|ccccccc|}
  \hline 
  \padedvphantom{I}{2ex}{2ex}
   $A$ & \multicolumn{7}{c|}{$\left(\begin{smallmatrix}  2 & 1 & 1 \\  1 & 2 & 0 \\  1 & 0 & 2  \end{smallmatrix}\right)$} \\
   \hline 
   \padedvphantom{I}{2ex}{2ex}
        $B$ & $\left(\begin{smallmatrix} 0 \\ b \\ -b \end{smallmatrix}\right)$ & $\left(\begin{smallmatrix} 1/2 \\ 0 \\ 1 \end{smallmatrix}\right)$ & $\left(\begin{smallmatrix} 1/2 \\ 1 \\ 0 \end{smallmatrix}\right)$ & $\left(\begin{smallmatrix} 1 \\ 0 \\ 1 \end{smallmatrix}\right)$  &
        $\left(\begin{smallmatrix} 1 \\ 1 \\ 0 \end{smallmatrix}\right)$  &
        $\left(\begin{smallmatrix} -1/2 \\ 0 \\ 0\end{smallmatrix}\right)$ & $\left(\begin{smallmatrix} -1 \\ -1/2 \\ -1/2 \end{smallmatrix}\right)$  \\
         \padedvphantom{I}{1ex}{1ex}
        $C$ & {\tiny $(6b^2-1)/{24}$} & {\tiny $7/48$}   & {\tiny $7/48$} & {\tiny $5/24$} &  {\tiny $5/24$} & {\tiny $-1/48$}  &  {\tiny $1/48$}   \\
    \hline 
    \padedvphantom{I}{3ex}{3ex}
      $\mathcal{L}_1(A)$ & \multicolumn{7}{c|}{$\left(\begin{smallmatrix} 2 & 1 & 1 & 2 \\  1 & 2 & 1 & 2 \\ 1 & 1 & 2 & 2 \\ 2 & 2 & 2 & 4  \end{smallmatrix}\right)$} \\
   \hline 
   \padedvphantom{I}{3ex}{3ex}
        $\mathcal{L}_1(B)$ & $\left(\begin{smallmatrix}0 \\ b \\ -b \\ 0 \end{smallmatrix}\right)$ & $\left(\begin{smallmatrix}  1/2 \\ 0 \\ 1 \\ 1  \end{smallmatrix}\right)$ & $\left(\begin{smallmatrix} 1/2 \\ 1 \\ 0 \\ 1  \end{smallmatrix}\right)$ & $\left(\begin{smallmatrix} 1 \\ 0 \\ 1 \\ 1  \end{smallmatrix}\right)$ & $\left(\begin{smallmatrix} 1 \\ 1 \\ 0 \\ 1 \end{smallmatrix}\right)$ & $\left(\begin{smallmatrix} -1/2 \\ 0 \\ 0 \\ 0 \end{smallmatrix}\right)$ & $\left(\begin{smallmatrix} -1 \\ -1/2 \\ -1/2 \\ -1 \end{smallmatrix}\right)$ \\
         \padedvphantom{I}{1ex}{1ex}
        $\mathcal{L}_1(C)$ & {\tiny $(6b^2-1)/{24}$ }& {\tiny $7/48$ }  & {\tiny $7/48$ } &  {\tiny $5/24$ }& {\tiny $5/24$} & {\tiny $-1/48$}  & {\tiny $1/48$} \\
    \hline 
    \padedvphantom{I}{3.5ex}{3.5ex}
    $\mathcal{DL}_1(A)$ & \multicolumn{7}{c|}{$\left(\begin{smallmatrix}   1 & 0 & 0 & -1/2\\  0 & 1 & 0 & -1/2 \\ 0 & 0 & 1 & -1/2 \\ -1/2 & -1/2 & -1/2 & 1  \end{smallmatrix}\right)$} \\
   \hline 
   \padedvphantom{I}{3ex}{3ex}
        $\mathcal{DL}_1(B)$ & $\left(\begin{smallmatrix} 0 \\ b \\ -b \\ 0 \end{smallmatrix}\right)$ & $\left(\begin{smallmatrix}  0 \\  -1/2 \\ 1/2 \\ 1/4  \end{smallmatrix}\right)$ & $\left(\begin{smallmatrix} 0 \\ 1/2 \\ -1/2 \\ 1/4  \end{smallmatrix}\right)$ & $\left(\begin{smallmatrix} 1/2 \\ -1/2 \\ 1/2 \\ 0 \end{smallmatrix}\right)$ & $\left(\begin{smallmatrix} 1/2 \\ 1/2 \\ -1/2 \\ 0 \end{smallmatrix}\right)$ & $\left(\begin{smallmatrix}
            -1/2 \\ 0 \\ 0 \\ 1/4
        \end{smallmatrix}\right)$  & $\left(\begin{smallmatrix}
            -1/2 \\ 0 \\ 0 \\ 0
        \end{smallmatrix}\right)$\\
         \padedvphantom{I}{1ex}{1ex}
        $\mathcal{DL}_1(C)$ & {\tiny $(3b^2-2)/{16}$ } & {\tiny $1/16$}   & {\tiny $1/16$} & {\tiny $1/8$} & {\tiny $1/8$}  &{\tiny $-1/48$} &  {\tiny $1/16$} \\
    \hline
   \end{tabular}
\end{table}
}

Since $i$, $j$ and $k$ are symmetric in the quadratic form generated by $\mathcal{L}_1(A)$ and $\mathcal{DL}_1(A)$, we can get new modular triples by exchanging $i$, $j$ and $k$. Note that the matrices $\mathcal{L}_1(A)$ and $\mathcal{DL}_1(A)$ appeared in \cite{Wang-counterexample} in a different way. The second author \cite[Theorem 1.5]{Wang-counterexample} proved that $f_{\mathcal{DL}_1(A),B_i,1/16}(q)$ is modular for
$B_1=(0,1/2,1/2,-1/2)^\mathrm{T}$ and $B_2=(0,1/2,1/2,0)^\mathrm{T}$ but $f_{\mathcal{L}_1(A),\mathcal{D}(B_i),C'}(q)$ ($i=1,2$) are not modular for any $C'$, and thereby provides the first counterexamples to Zagier's duality expectation.

We now justify the modularity of the lift-dual $\mathcal{DL}_1(A,B,C)$ in Table \ref{Wang-3-Exam7}.
\begin{theorem}
Let
\begin{align}
F(u,v,w,t;q^2):=\sum_{i,j,k,l\geq 0} \frac{u^iv^jw^kt^lq^{i^2+j^2+k^2+l^2-il-jl-kl}}{(q^2;q^2)_i(q^2;q^2)_j(q^2;q^2)_k(q^2;q^2)_l}.
\end{align}
We have
\begin{align}
&F(q^{b},q^{-b},1,1;q^2)=\frac{J_{2}^{3}(-q^{1-b},-q^{1+b};q^2)_{\infty}(-q^{2+b},-q^{2-b};q^4)_{\infty}}{J_{1}^{2}J_{4}} \nonumber\\
&\qquad +2q\frac{J_{4}^{3}(-q^{-b},-q^{2+b};q^2)_{\infty}(-q^{b},-q^{4-b};q^4)_{\infty}}{J_{2}^{3}}, \quad b\in \mathbb{Q},\label{thm1-1} \\
&F(q^{-1},q,q,1;q^2)= 2\frac{J_{4}^{9}}{J_{2}^{7}J_{8}^{2}}+\frac{J_{2}^{7}J_{8}^{2}}{J_{1}^{4}J_{4}^{5}}, \label{thm1-2} \\
&F(q,q^{-1},1,q^{1/2};q^2)=2\frac{J_{4}J_{5,12}}{J_{1}^{2}}+2q^{1/2}\frac{J_{2}^{3}J_{2,12}}{J_{1}^{3}J_{4}}-2q\frac{J_{4}J_{1,12}}{J_{1}^{2}},  \label{thm1-3}\\
&F(1,1,q^{-1},q^{1/2};q^2)=2\frac{J_2^3J_6^2}{J_1^3J_4J_{12}}+4q^{1/2}\frac{J_3J_4J_{12}}{J_1^2J_6},  \label{thm1-4}\\
&F(1,1,q^{-1},1;q^2)=8\frac{J_{2}^{3}}{J_{1}^{3}}. \label{thm1-5}
\end{align}
\end{theorem}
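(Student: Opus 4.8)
The plan is to reduce everything to the single master function
\begin{align*}
F(u,v,w,t;q^2)=\sum_{i,j,k,l\geq 0}\frac{u^iv^jw^kt^lq^{i^2+j^2+k^2+l^2-il-jl-kl}}{(q^2;q^2)_i(q^2;q^2)_j(q^2;q^2)_k(q^2;q^2)_l},
\end{align*}
and exploit the fact that the only ``coupling'' among the four indices is through $l$: the exponent contains $-il-jl-kl$ but no $ij$, $ik$, $jk$ terms. First I would sum over $i$, $j$, $k$ independently using Euler's identity \eqref{Euler2}. Since $\sum_i u^iq^{i^2-il}/(q^2;q^2)_i=\sum_i (uq^{-l})^iq^{i^2}/(q^2;q^2)_i=(-uq^{1-l};q^2)_\infty$ (after replacing $q\mapsto q^2$ in \eqref{Euler2} and matching $q^{i^2}=q^{2\cdot\frac{i^2-i}{2}}\cdot q^i$), each of the $i,j,k$ sums collapses to an infinite product. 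This leaves a single sum over $l$:
\begin{align*}
F(u,v,w,t;q^2)=\sum_{l\geq 0}\frac{t^lq^{l^2}}{(q^2;q^2)_l}(-uq^{1-l};q^2)_\infty(-vq^{1-l};q^2)_\infty(-wq^{1-l};q^2)_\infty.
\end{align*}
Then, using $(-xq^{1-l};q^2)_\infty=(-xq^{1-l};q^2)_l\,(-xq;q^2)_\infty$ and the reflection identity $(-xq^{1-l};q^2)_l=(-x^{-1}q^{-1};q^2)_l\,(xq)^l\,q^{l^2-2l}$ (the $q^2$-analogue of the formula $(aq^{-n};q)_n=(q/a;q)_n(-a/q)^nq^{-\binom{n}{2}}$ quoted in the preliminaries), the remaining $l$-sum becomes a $q^2$-hypergeometric series with three upper parameters of shape $-1/(xq)$ and it should, for the specific specializations of $(u,v,w,t)$ listed, reduce to one of the named summation formulas.

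\textbf{Case-by-case reductions.} For \eqref{thm1-1}, after the above reduction with $u=q^b$, $v=q^{-b}$, $w=1$ the $l$-sum should become (up to the overall product $(-q^{1-b},-q^{1+b},-q;q^2)_\infty$) a ${}_2\phi_2$ or ${}_3\phi_3$ that is evaluable by Bailey's ${}_2F_1(-1)$ analogue \eqref{Bailey's} or the Gauss ${}_2F_1(-1)$ analogue \eqref{Gauss'}; the two product terms on the right of \eqref{thm1-1} presumably arise from splitting the $l$-sum into even and odd $l$ (which is exactly why two Jacobi-triple-product-type pieces appear). For \eqref{thm1-2}, the specialization $u=q^{-1}$, $v=q$, $w=q$, $t=1$ makes the surviving sum a theta-type series in $q^2$ (note $u=q^{-1}$ contributes a factor that telescopes nicely), and I expect it to match a combination of Slater entries such as \eqref{S. 38}--\eqref{S. 39} after replacing $q\mapsto q^2$. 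For \eqref{thm1-3} and \eqref{thm1-4}, the presence of $t=q^{1/2}$ means the final $l$-sum runs over $q^{l^2+l/2}$, i.e. it is naturally a series in $q^{1/2}$; I would substitute $q\mapsto q$ throughout, split by parity of $l$, and recognize the two resulting sums as Slater's \eqref{S. 48}, \eqref{S. 50} (for \eqref{thm1-3}) and \eqref{S. 28}, \eqref{S. 29}, \eqref{S. 9} (for \eqref{thm1-4}); the $J_{5,12}-qJ_{1,12}$ shape on the right of \eqref{thm1-3} is a tell-tale sign that \eqref{S. 48} is the relevant input. Finally \eqref{thm1-5} is the $t=1$, $w=q^{-1}$, $u=v=1$ case: after the reduction the $l$-sum should become $\sum_l q^{l^2}(-q^{-1};q^2)_\infty\cdot(\text{stuff})/(q^2;q^2)_l$ and collapse, via the $q$-Gauss sum \eqref{Gauss} or a direct application of \eqref{Bailey's lemma-1.2}, to a pure power of $J_2/J_1$; the constant $8=2^3$ strongly suggests that each of the three ``free'' sums over $i,j,k$ independently contributes a doubling, consistent with $w=q^{-1}$ forcing a factor of $2$ and the symmetry producing $2\cdot2\cdot2$.

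\textbf{Main obstacle.} The routine part is the Euler-sum collapse over $i,j,k$; the real work is identifying, for each of the five specializations, exactly which classical summation or Slater identity evaluates the residual one-variable sum, and in particular handling the parity splitting correctly so that the two (or three) infinite-product summands on the right-hand sides emerge with the right powers of $q$ and the right theta-quotients. I expect \eqref{thm1-1} to be the most delicate because it carries a free rational parameter $b$, so the evaluation must be a genuine identity in $b$ (ruling out ad hoc term-matching) and the even/odd split of the $l$-sum has to be performed with $b$ symbolic; the Gauss ${}_2F_1(-1)$ evaluation \eqref{Gauss'} with $a^2=-q^{-b}$-type arguments, combined with the Jacobi triple product \eqref{Jacobi} to repackage the theta series, is the tool I would lean on there. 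A secondary subtlety is bookkeeping the half-integer powers $q^{1/2}$ in \eqref{thm1-3}--\eqref{thm1-4}: I would work formally in $\mathbb{Z}[[q^{1/2}]]$ throughout and only at the end separate the integer- and half-integer-power parts to read off the two summands.
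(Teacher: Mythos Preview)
Your strategy is exactly the one the paper uses: collapse the $i,j,k$ sums via Euler's identity \eqref{Euler2} to get the single $l$-sum $\sum_{l\geq 0}\frac{t^lq^{l^2}}{(q^2;q^2)_l}(-uq^{1-l},-vq^{1-l},-wq^{1-l};q^2)_\infty$, split by parity of $l$, and then evaluate each piece by a classical summation or Slater identity. Your case-by-case identifications are mostly on target (\eqref{S. 48}/\eqref{S. 50} for \eqref{thm1-3}, \eqref{S. 28}/\eqref{S. 29} for \eqref{thm1-4}, \eqref{Bailey's}/\eqref{Gauss'} for \eqref{thm1-1}, $q$-Gauss \eqref{Gauss} as a limit for \eqref{thm1-5}); the only miss is \eqref{thm1-2}, where the even and odd pieces are again ${}_2\phi_2$'s summable by \eqref{Gauss'} and \eqref{Bailey's} (just as in \eqref{thm1-1}), not the Slater entries \eqref{S. 38}--\eqref{S. 39}.
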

\begin{proof}
Summing over $i,j,k$ using \eqref{Euler1} and then splitting the sum into two parts according to the parity of $l$, we have
\begin{align}\label{3-Exam7-jk-in}
&F(u,v,w,t;q^2)=\sum_{l\geq 0} \frac{t^{l}q^{l^2}}{(q^2;q^2)_l}(-uq^{1-l};q^2)_{\infty}(-vq^{1-l};q^2)_{\infty}(-wq^{1-l};q^2)_{\infty} \nonumber\\
&=(-uq,-vq,-wq;q^2)_{\infty}\sum_{l\geq 0} \frac{t^{2l}q^{4l^2}}{(q^2;q^2)_{2l}}(-uq^{1-2l},-vq^{1-2l},-wq^{1-2l};q^2)_{l} \nonumber\\
&\quad +(-u,-v,-w;q^2)_{\infty}\sum_{l\geq 0} \frac{t^{2l+1}q^{4l^2+4l+1}}{(q^2;q^2)_{2l+1}}(-uq^{-2l},-vq^{-2l},-wq^{-2l};q^2)_{l} \nonumber\\
&=(-uq,-vq,-wq;q^2)_{\infty}\sum_{l\geq 0} \frac{t^{2l}u^lv^lw^lq^{l^2}(-q/u,-q/v,-q/w;q^2)_{l}}{(q^2;q^2)_{2l}} \nonumber\\
&\quad +(-u,-v,-w;q^2)_{\infty}\sum_{l\geq 0} \frac{t^{2l+1}u^lv^lw^lq^{l^2+l+1}(-q^{2}/u,-q^{2}/v,-q^{2}/w;q^2)_{l}}{(q^2;q^2)_{2l+1}}.
\end{align}

(1) By \eqref{3-Exam7-jk-in} we have
\begin{align}
&F(q^{b},q^{-b},1,1;q^2)=(-q,-q^{1+b},-q^{1-b};q^2)_{\infty}\sum_{l\geq 0} \frac{q^{l^2}(-q,-q^{1+b},-q^{1-b};q^2)_{l}}{(q^2;q^2)_{2l}} \nonumber\\
&\quad +(-1,-q^{b},-q^{-b};q^2)_{\infty}\sum_{l\geq 0} \frac{q^{l^2+l+1}(-q^{2},-q^{2+b},-q^{2-b};q^2)_{l}}{(q^2;q^2)_{2l+1}} \nonumber\\
=&(-q,-q^{1+b},-q^{1-b};q^2)_{\infty}{}_2\phi_2\bigg(\genfrac{}{}{0pt}{} {-q^{1+b},-q^{1-b}}{q,-q^2};q^2,-q  \bigg) \nonumber\\
&+\frac{q}{1-q^2}(-1,-q^{-b},-q^{b};q^2)_{\infty}{}_2\phi_2\bigg(\genfrac{}{}{0pt}{} {-q^{2+b},-q^{2-b}}{-q^3,q^3};q^2,-q^2  \bigg).
\end{align}
Substituting \eqref{Bailey's} and  \eqref{Gauss'} into the above identity, we obtain \eqref{thm1-1}.

(2) By \eqref{3-Exam7-jk-in} we have
\begin{align}
&F(q^{-1},q,q,1;q^2)=(-1,-q^{2},-q^{2};q^2)_{\infty}\sum_{l\geq 0} \frac{q^{l^2+l}(-q^{2},-1,-1;q^2)_{l}}{(q^2;q^2)_{2l}} \nonumber\\
&\quad +(-q^{-1},-q,-q;q^2)_{\infty}\sum_{l\geq 0} \frac{q^{l^2+2l+1}(-q^{3},-q,-q;q^2)_{l}}{(q^2;q^2)_{2l+1}} \nonumber\\
&=(-1,-q^{2},-q^{2};q^2)_{\infty}{}_2\phi_2\bigg(\genfrac{}{}{0pt}{} {-1,-1}{-q,q};q^2,-q^2  \bigg) \nonumber\\
&\quad +\frac{1}{1-q}(-q,-q,-q;q^2)_{\infty}{}_2\phi_2\bigg(\genfrac{}{}{0pt}{} {-q,-q}{-q^2,q^3};q^2,-q^3  \bigg).
\end{align}
Substituting \eqref{Gauss'} and \eqref{Bailey's} into it, we obtain \eqref{thm1-2}.

(3) By \eqref{3-Exam7-jk-in} we have
\begin{align}
&F(q,q^{-1},1,q^{1/2};q^2)=(-q,-q^{2},-1;q^2)_{\infty}\sum_{l\geq 0} \frac{q^{l^2+l}(-1,-q^{2},-q;q^2)_{l}}{(q^2;q^2)_{2l}} \nonumber\\
&\quad +(-q,-q^{-1},-1;q^2)_{\infty}\sum_{l\geq 0} \frac{q^{l^2+2l+\frac{3}{2}}(-q,-q^{3},-q^2;q^2)_{l}}{(q^2;q^2)_{2l+1}} \nonumber\\
&=2(-q,-q^{2},-q^{2};q^2)_{\infty}\sum_{l\geq 0} \frac{q^{l^2+l}(-1;q^2)_{l}}{(q;q)_{2l}}   \nonumber\\
&\quad +2(-q,-q,-q^2;q^2)_{\infty}\sum_{l\geq 0} \frac{q^{l^2+2l+\frac{1}{2}}(-q;q^2)_{l}}{(q;q)_{2l+1}}.
\end{align}
Substituting \eqref{S. 48} and \eqref{S. 50} into it, we obtain \eqref{thm1-3}.

(4) By \eqref{3-Exam7-jk-in} we have
\begin{align}
&F(1,1,q^{-1},q^{1/2};q^2)=(-q,-q,-1;q^2)_{\infty}\sum_{l\geq 0} \frac{q^{l^2}(-q,-q,-q^{2};q^2)_{l}}{(q^2;q^2)_{2l}}  \\
&\quad +(-1,-1,-q^{-1};q^2)_{\infty}\sum_{l\geq 0} \frac{q^{l^2+l+\frac{3}{2}}(-q^2,-q^2,-q^{3};q^2)_{l}}{(q^2;q^2)_{2l+1}} \nonumber\\
&=2(-q,-q,-q^2;q^2)_{\infty}\sum_{l\geq 0} \frac{q^{l^2}(-q;q^2)_{l}}{(q;q)_{2l}} +4(-q,-q^2,-q^2;q^2)_{\infty}\sum_{l\geq 0} \frac{q^{l^2+l+\frac{1}{2}}(-q^2;q^2)_{l}}{(q;q)_{2l+1}}. \nonumber
\end{align}
Substituting \eqref{S. 29} and \eqref{S. 28} into it, we obtain \eqref{thm1-4}.

(5) By \eqref{3-Exam7-jk-in} we have
\begin{align}
&F(1,1,q^{-1},1;q^2)=(-q,-q,-1;q^2)_{\infty}\sum_{l\geq 0} \frac{q^{l^2-l}(-q,-q,-q^{2};q^2)_{l}}{(q^2;q^2)_{2l}} \nonumber\\
&\quad +(-1,-1,-q^{-1};q^2)_{\infty}\sum_{l\geq 0} \frac{q^{l^2+1}(-q^2,-q^2,-q^{3};q^2)_{l}}{(q^2;q^2)_{2l+1}} \nonumber\\
&=2(-q,-q,-q^2;q^2)_{\infty}\sum_{l\geq 0} \frac{q^{l^2-l}(-q;q^2)_{l}}{(q;q)_{2l}} \nonumber\\
&\quad +4(-q,-q^2,-q^2;q^2)_{\infty}\sum_{l\geq 0} \frac{q^{l^2}(-q^2;q^2)_{l}}{(q;q)_{2l+1}} \nonumber\\
&=2(-q,-q,-q^2;q^2)_{\infty}\lim\limits_{a\to 0}\sum_{l\geq 0} \frac{(1/a,-q;q^2)_{l}(-a)^{l}}{(q,q^2;q^2)_{l}} \nonumber\\
&\quad +\frac{4}{1-q}(-q,-q^2,-q^2;q^2)_{\infty}\lim\limits_{a\to 0}\sum_{l\geq 0} \frac{(1/a,-q^2;q^2)_{l}(-aq)^{l}}{(q^3,q^2;q^2)_{l}} \nonumber\\
&=2(-q,-q,-q^2;q^2)_{\infty}\lim\limits_{a\to 0}\frac{(aq,-1;q^2)_{\infty}}{(q,-a;q^2)_{\infty}}  \nonumber\\
&\quad +\frac{4}{1-q}(-q,-q^2,-q^2;q^2)_{\infty}\lim\limits_{a\to 0}\frac{(aq^3,-q;q^2)_{\infty}}{(q^3,-aq;q^2)_{\infty}} \quad(\text{by \eqref{Gauss}})  \nonumber\\
&=8\frac{(-q,-q,-q^2,-q^2;q^2)_{\infty}}{(q;q^2)_{\infty}}.
\end{align}
This proves \eqref{thm1-5}.
\end{proof}

\subsection{Lift-dual of Zagier's Example 8}
We list the rank three modular triples $(A,B,C)$ from Zagier's Example 8 (see \cite[Table 3]{Zagier} or \cite[Example 8]{Wang-rank3}) in Table \ref{tab-A-exam1}. We label the vectors $B$ and scalars $C$ as $B_i$ and $C_i$ ($i=1,2,\dots, 7$) according to their order of appearance.
From Table \ref{tab-lift} we see that only the lift via $\mathcal{L}_1$ is meaningful, and we checked that $\mathcal{L}_1(A)$ is indeed positive definite.  We record the lift and lift-dual via $\mathcal{L}_1$ in Table \ref{tab-A-exam1} as well.
{\small
\begin{table}[htbp]
\centering
 \caption{Lift and lift-dual via $\mathcal{L}_1$ of the modular triples $(A,B,C)$ in Zagier's Example 8}
    \label{tab-A-exam1}
    \begin{tabular}{|c|ccccccc|}
\hline 
\padedvphantom{I}{2ex}{2ex}
$A$ & \multicolumn{7}{c|}{$\left(\begin{smallmatrix}
            4 & 2 & 1 \\ 2 & 2 & 0 \\ 1 & 0 & 1
        \end{smallmatrix}\right)$} \\
\hline 
\padedvphantom{I}{2ex}{2ex}
$B$ &  $\left(\begin{smallmatrix}
            -1 \\ -1 \\ 1/2
        \end{smallmatrix}\right)$ & $\left(\begin{smallmatrix}
            0 \\ -1/2 \\ 1/2
        \end{smallmatrix}\right)$ & $\left(\begin{smallmatrix}
            0 \\ 0 \\ 1/2
        \end{smallmatrix}\right)$  &  $\left(\begin{smallmatrix}
            2 \\ 1 \\ 1/2
        \end{smallmatrix}\right)$ & $\left(\begin{smallmatrix}
            0 \\ 0 \\ 0
        \end{smallmatrix}\right)$ &  $\left(\begin{smallmatrix}
            2 \\ 1 \\ 1
        \end{smallmatrix}\right)$ & $\left(\begin{smallmatrix}
            1 \\ 0 \\ 1/2
        \end{smallmatrix}\right)$ \\
         \padedvphantom{I}{1ex}{1ex}
$C$ & {\tiny $1/12$ } &{\tiny $1/48$} & {\tiny $0$} &  {\tiny $1/3$} & {\tiny $-1/24$ } & {\tiny $11/24$}  & {\tiny $1/12$ }\\
\hline 
\padedvphantom{I}{3ex}{3ex}
   $\mathcal{L}_1(A)$
    & \multicolumn{7}{c|}{$\left(\begin{smallmatrix}  4 & 2 & 1 & 3 \\ 2 & 2 & 1 & 2 \\ 1 & 1 & 1 & 1 \\ 3 & 2 & 1 & 3 \end{smallmatrix}\right)$}  \\
\hline
\padedvphantom{I}{3ex}{3ex}
   $\mathcal{L}_1(B)$   & $\left(\begin{smallmatrix} -1 \\ -1 \\ 1/2 \\ -1/2 \end{smallmatrix}\right)$ & $\left(\begin{smallmatrix} 0 \\ -1/2 \\ 1/2 \\ 0 \end{smallmatrix}\right)$ &  $\left(\begin{smallmatrix} 0 \\ 0 \\ 1/2 \\ 1/2 \end{smallmatrix}\right)$ & $\left(\begin{smallmatrix} 2 \\ 1 \\ 1/2 \\ 3/2 \end{smallmatrix}\right)$
    & $\left(\begin{smallmatrix} 0 \\ 0 \\ 0 \\ 0  \end{smallmatrix}\right)$ & $\left(\begin{smallmatrix} 2 \\ 1 \\ 1 \\ 2 \end{smallmatrix}\right)$ & $\left(\begin{smallmatrix} 1 \\ 0 \\ 1/2 \\ 1/2 \end{smallmatrix}\right)$  \\
 \padedvphantom{I}{1ex}{1ex}
 $\mathcal{L}_1(C)$    & {\tiny $1/12$}  & {\tiny $1/48$ } & {\tiny $0$} &{\tiny $1/3$} & {\tiny $-1/24$} & {\tiny $11/24$ } & {\tiny $1/12$} \\
 \hline 
 \padedvphantom{I}{3ex}{3ex}
   $\mathcal{D}\mathcal{L}_1(A)$
    & \multicolumn{7}{c|}{$\left(\begin{smallmatrix}  1 & 0 & 0 & -1 \\ 0 & 2 & -1 & -1 \\ 0 & -1 & 2 & 0 \\ -1 & -1 & 0 & 2 \end{smallmatrix}\right)$}  \\
 \hline
 \padedvphantom{I}{3ex}{3ex}
   $\mathcal{D}\mathcal{L}_1(B)$   & $\left(\begin{smallmatrix} -1/2 \\ -2 \\ 2 \\ 1 \end{smallmatrix}\right)$ & $\left(\begin{smallmatrix} 0 \\ -3/2 \\ 3/2 \\ 1/2 \end{smallmatrix}\right)$ &  $\left(\begin{smallmatrix} -1/2 \\ -1 \\ 1 \\ 1 \end{smallmatrix}\right)$ & $\left(\begin{smallmatrix} 1/2 \\ 0 \\ 0 \\ 0 \end{smallmatrix}\right)$
    & $\left(\begin{smallmatrix} 0 \\ 0 \\ 0 \\ 0  \end{smallmatrix}\right)$ & $\left(\begin{smallmatrix} 0 \\ -1 \\ 1 \\ 1 \end{smallmatrix}\right)$ & $\left(\begin{smallmatrix} 1/2 \\ -1 \\ 1 \\ 0 \end{smallmatrix}\right)$  \\
 \padedvphantom{I}{1ex}{1ex}
 $\mathcal{D}\mathcal{L}_1(C)$    & {\tiny $5/4$ } & {\tiny $9/16$ } & {\tiny $1/3$} & {\tiny $0$} & {\tiny $-1/8$} & {\tiny $3/8$} & {\tiny $1/4$} \\
    \hline
    \end{tabular}
\end{table}
}

The lift-dual matrix $\mathcal{DL}_1(A)$ is essentially the rank four tadpole Cartan matrix (with some rows and columns interchanged) discussed in \cite{CMP,MW24,Shi-Wang}. The modularity for the Nahm sums with vectors $\mathcal{DL}_1(B_4)=(1/2,0,0,0)^\mathrm{T}$, $\mathcal{D}\mathcal{L}_1(B_5)=(0,0,0,0)^\mathrm{T}$ and $\mathcal{D}\mathcal{L}_1(B_6)=(0,-1,1,1)^\mathrm{T}$ has been confirmed by Shi and Wang  \cite[Theorem 1.5]{Shi-Wang}. A conjectural Rogers--Ramanujan type identity for the case $\mathcal{D}\mathcal{L}_1(B_3)=(-1/2,-1,1,1)^\mathrm{T}$  was presented in \cite[(1.24)]{Shi-Wang}, which can be stated equivalently as
\begin{align}\label{eq-conj-Shi-Wang-equivalent}
\sum_{i,j,k,l\geq 0} \frac{q^{\frac{1}{2}(i^2-i)+j^2+k^2+l^2-il-jk-jl-j+k+l}}{(q;q)_i(q;q)_j(q;q)_k(q;q)_l}=6\frac{J_2^2J_3^3}{J_1^5}.
\end{align}
The other vectors are new, but we are not able to confirm their modularity at this stage. Through Maple we find the following formulas for the Nahm sums associated with the other triples $\mathcal{DL}_1(A,B_i,C_i)$ ($i=1,2,7$).
\begin{conj}\label{conj-new-tadpole}
We have
\begin{align}
&\sum_{i,j,k,l\geq 0} \frac{q^{\frac{1}{2}(i^2-i)+j^2+k^2+l^2-il-jk-jl-2j+2k+l}}{(q;q)_i(q;q)_j(q;q)_k(q;q)_l}=8q^{-1}\frac{J_2^6}{J_1^6}, \label{conj-eq-1} \\
&\sum_{i,j,k,l\geq 0} \frac{q^{i^2+2j^2+2k^2+2l^2-2il-2jk-2jl-3j+3k+l}}{(q^2;q^2)_i(q^2;q^2)_j(q^2;q^2)_k(q^2;q^2)_l}=2q^{-1}\frac{J_2^3}{J_1^3}, \label{conj-eq-2} \\
&\sum_{i,j,k,l\geq 0} \frac{q^{\frac{1}{2}(i^2+i)+j^2+k^2+l^2-il-jk-jl-j+k}}{(q;q)_i(q;q)_j(q;q)_k(q;q)_l}=4\frac{J_2^6}{J_1^6}. \label{conj-eq-3}
\end{align}
\end{conj}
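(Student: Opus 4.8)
\emph{A strategy towards Conjecture \ref{conj-new-tadpole}.} After interchanging the indices $i,k$ with $l,j$, the matrix $\mathcal{D}\mathcal{L}_1(A)$ in Table \ref{tab-A-exam1} is the rank four tadpole Cartan matrix, whose Dynkin diagram is the path on $i,l,j,k$ with the ``half'' end at $i$; the plan is to exploit this path structure just as in the proofs of \eqref{intro-thm3-3}--\eqref{intro-thm3-12}. First I would sum over the end variable $i$. In each of \eqref{conj-eq-1}--\eqref{conj-eq-3} the index $i$ enters only through $q^{\binom{i}{2}}$ times an integer power of $q$ and the coupling $-il$ (resp.\ $-2il$ for \eqref{conj-eq-2}, with base $q^2$), so Euler's identity \eqref{Euler2} collapses the $i$-sum to a single infinite product: $(-q^{-l};q)_\infty$, $(-q^{1-2l};q^2)_\infty$ and $(-q^{1-l};q)_\infty$, respectively. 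Peeling off the $l$-independent part of this product by the reflection $(aq^{-n};q)_n=(q/a;q)_n(-a/q)^nq^{-(n^2-n)/2}$ rewrites each left-hand side as an overall constant product, $(-q;q)_\infty$ or $(-q;q^2)_\infty$, times a rank three multisum over $j,k,l$ which now carries one numerator Pochhammer in $l$ (a $(-1;q)_l$, a $(-q;q^2)_l$ or a $(-q;q)_l$). Since the right-hand sides are pure powers of $J_2/J_1=(-q;q)_\infty$, these residual triple sums must themselves be infinite products, which is encouraging.

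For the residual triple sum I would apply the constant term method to two of the three remaining variables, exactly as in our proofs of \eqref{intro-thm3-3}--\eqref{intro-thm3-12}: introduce auxiliary variables through Euler's identity \eqref{Euler1}, interchange the orders of summation, recognize the inner single-fold sum as one of the Rogers--Ramanujan type identities collected in Section \ref{sec-pre} (the tadpole tail typically produces sums close to \eqref{S. 9}, \eqref{S. 85} or \eqref{S. 29}, or else a ${}_2\phi_1$ summable by \eqref{Gauss}, \eqref{Bailey's} or \eqref{Gauss'}), and then extract the constant term using Jacobi's triple product \eqref{Jacobi}. When the tail sum is not directly on our list, I would manufacture it by iterating Bailey's lemma from a suitable seed: the limiting pair \eqref{Bailey's lemma-1.1} supplies the $q^{n^2}$-weights, while the base-changing constructions \eqref{Lovejoy-a-aq-1}, \eqref{W-BP-a-aq} and \eqref{McLaughlin-a-a/q} adjust the relative parameter $a$ to match the linear term of the quadratic form coming from $\mathcal{D}\mathcal{L}_1(B_i)$. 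It is natural to treat \eqref{conj-eq-1}, \eqref{conj-eq-2} and \eqref{conj-eq-3} together: they share the tadpole matrix and differ only in $B$, and \eqref{conj-eq-2} is the base-$q^2$ companion of \eqref{conj-eq-1} and \eqref{conj-eq-3}, so after the $i$-summation the three reductions run in parallel. As an independent route, one might instead try to extend the Bailey-chain argument of Shi and Wang \cite{Shi-Wang} for the tadpole vectors $\mathcal{D}\mathcal{L}_1(B_4),\mathcal{D}\mathcal{L}_1(B_5),\mathcal{D}\mathcal{L}_1(B_6)$ to the new vectors $\mathcal{D}\mathcal{L}_1(B_1),\mathcal{D}\mathcal{L}_1(B_2),\mathcal{D}\mathcal{L}_1(B_7)$; the Nahm sums for these vectors may well be linked by $q$-difference equations, which would transport the known results to the conjectural cases. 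Throughout, a high-order series check in Maple — the way \eqref{conj-eq-1}--\eqref{conj-eq-3} were discovered — safeguards against sign and normalization slips at each step.

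The main obstacle I anticipate is exactly the step immediately after the first Euler summation. Unlike the situation for Zagier's Example $7$ treated above, the residual rank three sum does not coincide with any single known Rogers--Ramanujan type identity, so one must combine the constant term method in two variables with one or more Bailey-lemma iterations; both the correct choice of Bailey seed and the correct substitutions $z_i\mapsto z_i^{e_i}$, together with the bookkeeping of poles when extracting constant terms, are delicate. This is why \eqref{conj-eq-1}--\eqref{conj-eq-3} are recorded here only as a conjecture: carrying the above outline through rigorously, and uniformly for all three vectors, is the crux of the matter.
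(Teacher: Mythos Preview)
The paper does not prove this statement: it is explicitly recorded as Conjecture~\ref{conj-new-tadpole}, and the surrounding text says ``we are not able to confirm their modularity at this stage'' and ``Through Maple we find the following formulas.'' So there is no proof in the paper to compare your proposal against.

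Your write-up is consistent with this: you present a strategy rather than a proof, and you yourself flag the bottleneck (the residual rank three sum after the Euler summation not matching a known identity) as the reason the identities remain conjectural. That assessment aligns with the paper's. If you want this to become an actual proof, the honest gap is exactly where you say it is: after summing out $i$ via \eqref{Euler2}, you are left with a triple sum carrying a numerator Pochhammer in $l$, and neither the constant term method nor the Bailey-pair machinery in Section~\ref{sec-pre} is shown to close it. Until one of those steps is actually executed --- a specific Bailey seed identified and the chain carried through to a recognizable product, or a concrete $q$-difference relation linking these sums to the Shi--Wang cases --- what you have is a plausible plan, not a proof.
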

Shi and Wang \cite{Shi-Wang} also confirmed the modularity for the Nahm sum $f_{\mathcal{DL}_1(A),B_8',5/24}(q)$ with vector $B_8'=(0,-1,0,1)^\mathrm{T}$ which can be stated equivalently as \cite[(1.21)]{Shi-Wang}
\begin{align}
\sum_{i,j,k,l\geq 0} \frac{q^{i^2+2j^2+2k^2+2l^2-2il-2jk-2jl-2j+2l}}{(q^2;q^2)_i(q^2;q^2)_j(q^2;q^2)_k(q^2;q^2)_l}=3\frac{J_2J_6^3}{J_1^2J_4^2}. \label{Shi-Wang-eq121}
\end{align}
This additional vector is not obtained by the lift-dual operation. As can be seen from \eqref{eq-conj-Shi-Wang-equivalent}--\eqref{Shi-Wang-eq121}, each of the Nahm sums $f_{\mathcal{DL}_1(A,B_i,C_i)}(q)$ ($i=1,2,3,7$) and  $f_{\mathcal{DL}_1(A),B_8',5/24}(q)$ can be expressed by a single product. In contrast, the Nahm sums  $f_{\mathcal{DL}_1(A,B_i,C_i)}(q)$ ($i=4,5,6$) cannot be expressed as single infinite products. Instead, they can be expressed as the sum of two infinite products (see \cite[Theorem 1.5]{Shi-Wang}).

\subsection{Lift-dual of Zagier's Example 9}
Recall the rank three modular triples from Zagier's Example 9 (see \cite[Table 3]{Zagier} or \cite[Example 9]{Wang-rank3}):
\begin{equation}\label{eq-Zagier-exam9}
    \begin{split}
      &  A=\begin{pmatrix} 6 & 4 & 2 \\ 4 & 4 & 2 \\ 2 & 2 & 2 \end{pmatrix}, \quad B_1=\begin{pmatrix} 0 \\ 0 \\ 0 \end{pmatrix}, B_2=\begin{pmatrix} 1 \\ 0 \\ 0 \end{pmatrix}, B_3=\begin{pmatrix} 2 \\ 1 \\ 0 \end{pmatrix}, B_4=\begin{pmatrix} 3 \\ 2 \\ 1 \end{pmatrix}, \\
       & C_1= -\frac{1}{36}, \quad C_2=\frac{1}{12}, \quad C_3=\frac{11}{36}, \quad C_4=\frac{23}{36}.
    \end{split}
\end{equation}
From Table \ref{tab-lift} we see that only the lift via $\mathcal{L}_2$ is meaningful, and we checked that $\mathcal{L}_2(A)$ is indeed positive definite.  We record the lift and lift-dual via $\mathcal{L}_2$ in Table \ref{tab-A-exam2}.

{\small
\begin{table}[htbp]
\centering
 \caption{Lift and lift-dual via $\mathcal{L}_2$  of the modular triples $(A,B,C)$ in \eqref{eq-Zagier-exam9}}
    \label{tab-A-exam2}
 \begin{tabular}{|c|cccc|c|cccc|}
\hline
        \multicolumn{5}{|c|}{{Lift via $\mathcal{L}_2$}} &   \multicolumn{5}{c|}{{Lift-dual via $\mathcal{L}_2$}} \\
\hline 
\padedvphantom{I}{3ex}{3ex}
   $\mathcal{L}_2(A)$
    & \multicolumn{4}{c|}{$\left(\begin{smallmatrix}  6 & 4 & 3 & 8 \\ 4 & 4 & 2 & 6 \\ 3 & 2 & 2 & 4 \\ 8 & 6 & 4 & 12 \end{smallmatrix}\right)$}  & $\mathcal{D}\mathcal{L}_2(A)$
    & \multicolumn{4}{c|}{$\left(\begin{smallmatrix} 2 & 0 & -1 & -1 \\ 0 & 1 & 0 & -1/2 \\ -1 & 0 & 2 & 0 \\ -1 & -1/2 & 0 & 1 \end{smallmatrix}\right)$} \\
\hline
\padedvphantom{I}{3ex}{3ex}
   $\mathcal{L}_2(B)$   & $\left(\begin{smallmatrix} 0 \\ 0 \\ 0 \\ 0 \end{smallmatrix}\right)$ & $\left(\begin{smallmatrix} 1 \\ 0 \\ 0 \\ 1 \end{smallmatrix}\right)$ &  $\left(\begin{smallmatrix} 2 \\ 1 \\ 0 \\ 2 \end{smallmatrix}\right)$ & $\left(\begin{smallmatrix} 3 \\ 2 \\ 1 \\ 4 \end{smallmatrix}\right)$
 & $\mathcal{D}\mathcal{L}_2(B)$    & $\left(\begin{smallmatrix} 0 \\ 0 \\ 0 \\ 0  \end{smallmatrix}\right)$ & $\left(\begin{smallmatrix} 1 \\ -1/2 \\ -1 \\ 0 \end{smallmatrix}\right)$ & $\left(\begin{smallmatrix} 2 \\ 0 \\ -2 \\ -1/2 \end{smallmatrix}\right)$ & $\left(\begin{smallmatrix} 1 \\0 \\ -1 \\ 0 \end{smallmatrix}\right)$ \\
 \padedvphantom{I}{1ex}{1ex}
 $\mathcal{L}_2(C)$      & {\tiny $-{1}/{36}$} & {\tiny ${1}/{12}$} & {\tiny ${11}/{36}$} & {\tiny ${23}/{36}$} & $\mathcal{D}\mathcal{L}_2(C)$
    & {\tiny $-{5}/{36}$ } & {\tiny ${1}/{4}$} &  {\tiny ${37}/{36}$ } & {\tiny ${7}/{36}$} \\
    \hline
\end{tabular}
\end{table}
}

Though we cannot prove the modularity at this stage, we find the following identity which verifies the modularity for the second triple $\mathcal{DL}_2(A,B_2,C_2)$. It seems that at least two infinite products will be involved to express Nahm sums associated with the triples $\mathcal{DL}_2(A,B_i,C_i)$ ($i=1, 3, 4$).
\begin{conj}
We have
\begin{align}
\sum_{i,j,k,l\geq 0} \frac{q^{2i^2+j^2+2k^2+l^2-2ik-2il-jl+2i-j-2k}}{(q^2;q^2)_i(q^2;q^2)_j(q^2;q^2)_k(q^2;q^2)_l}=9\frac{J_3^3J_6^2}{J_1J_2^4}. \label{eq-conj-Z9-2}
\end{align}
\end{conj}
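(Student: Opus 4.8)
\emph{The plan.} I would follow the same template as the proven identities in the paper: strip off the summation variables one at a time, reduce to lower‑rank $q$‑hypergeometric sums covered by Euler's identities and Slater's list, and use the constant term method (on two variables) and Bailey pairs to absorb the remaining couplings. Note first that the matrix here is $\mathcal{D}\mathcal{L}_2(A)$ for Zagier's Example~9, so in the quadratic form $2i^2+j^2+2k^2+l^2-2ik-2il-jl$ the couplings form the path $k$--$i$--$l$--$j$, and the only variables directly summable against their own $(q^2;q^2)$‑factor by \eqref{Euler2} are the endpoints $j$ and $l$ (and, these two being coupled, only one at a time). Since the $j$‑part of the exponent is $(j^2-j)-jl$, summing over $j$ gives
\[
\sum_{j\ge0}\frac{q^{j^2-j-jl}}{(q^2;q^2)_j}=(-q^{-l};q^2)_\infty ,
\]
which turns the left‑hand side into a triple sum over $i,k,l$ carrying the factor $(-q^{-l};q^2)_\infty$.

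\emph{Splitting the $l$-sum.} Next I would split according to the parity of $l$, writing $l=2m$ or $l=2m+1$. One has $(-q^{-2m};q^2)_\infty=2q^{-m(m+1)}(-q^2;q^2)_m(-q^2;q^2)_\infty$ and $(-q^{-2m-1};q^2)_\infty=q^{-(m+1)^2}(-q;q^2)_{m+1}(-q;q^2)_\infty$, together with the telescoping identities $(-q^2;q^2)_m/(q^2;q^2)_{2m}=1/\big((q^2;q^2)_m(q^2;q^4)_m\big)$ and $(-q;q^2)_{m+1}/(q^2;q^2)_{2m+1}=1/\big((q;q^2)_{m+1}(q^4;q^4)_m\big)$. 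With these the left‑hand side becomes a sum of two triple series of the form
\[
(\text{infinite product})\times\sum_{i,k\ge0}\frac{q^{2i^2+2k^2-2ik+(\text{linear})}}{(q^2;q^2)_i(q^2;q^2)_k}\sum_{m\ge0}\frac{q^{3m^2+(\text{linear in }m)}\,q^{-4im}}{(q^2;q^2)_m(q^2;q^4)_m}\ \ \Big(\text{resp.}\ \frac{\cdots}{(q;q^2)_{m+1}(q^4;q^4)_m}\Big).
\]
As a consistency check, at $i=0$ these inner $m$‑sums are exactly the base‑$q^2$ specializations of \eqref{S. 46} and \eqref{A. 97}.

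\emph{The core.} It remains to evaluate this coupled triple series in $i,k,m$, in which the couplings $q^{-2ik}$ and $q^{-4im}$ together with the quadratics $q^{2i^2},q^{2k^2},q^{3m^2}$ all have to be eliminated (and none of the three quadratics is Euler‑summable against its $(q^2;q^2)$‑factor). My plan would be to introduce two auxiliary variables $x,y$ and use the Jacobi triple product \eqref{Jacobi} to rewrite, say, $q^{2i^2}$ and $q^{2k^2}$ as constant terms in $x$ and $y$; after this the exponent becomes linear in $i$ and $k$, so both can be summed off by \eqref{Euler1} and the $q$‑binomial theorem \eqref{q-binomial} (treating $m$ as a parameter and disposing of the $m$‑sum via the relevant Slater identity), leaving a two‑variable constant term of a product of theta functions and $q$‑Pochhammer symbols. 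Evaluating that constant term, presumably by specializing one auxiliary variable at a primitive cube root of unity — which is what should produce the factor $9$ and the moduli $3,6$ appearing in $9\,J_3^3J_6^2/(J_1J_2^4)$, in the spirit of the proofs of \eqref{intro-thm3-10}--\eqref{intro-thm3-12} — would complete the proof. A plausible alternative for the $m$‑ and $k$‑summations is to recognize the $m$‑sum as a piece of a Bailey pair relative to $1$ in base $q^2$ and push the factor $q^{-4im}$ and the $k$‑summation through Bailey's lemma.

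\emph{The obstacle.} The hard part is exactly this last step: the triple sum does not untangle by naive iterated Euler summation — already $\sum_{k\ge0}q^{2k^2}z^k/(q^2;q^2)_k$ has no closed product form — so everything hinges on choosing the right auxiliary substitutions (or the right Bailey pair) and then actually carrying out the resulting two‑variable constant‑term evaluation. This is the step the authors did not push through, which is presumably why the identity is stated only as a conjecture.
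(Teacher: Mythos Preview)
The paper does not prove this identity; it is explicitly labelled a conjecture, with the authors stating that they ``cannot prove the modularity at this stage.'' There is therefore no paper proof to compare your proposal against.

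Your outline follows the natural template used for the identities that \emph{are} proved in the paper --- Euler summation on an endpoint variable, a parity split on the remaining coupled variable, reduction to Slater-type single sums, and then a constant-term or Bailey-pair argument for whatever survives --- and your preliminary computations (the $j$-sum via \eqref{Euler2}, the parity factorizations of $(-q^{-l};q^2)_\infty$, the resulting $3m^2$ quadratic) are correct. You also correctly locate the obstruction: after these reductions one is left with a genuinely coupled triple sum in $i,k,m$ whose quadratics and cross-terms ($2i^2$, $2k^2$, $3m^2$, $-2ik$, $-4im$) do not dissolve under iterated Euler summation, and the two-variable constant-term evaluation you propose is not carried out. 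The suggestion about specializing an auxiliary variable at a cube root of unity to produce the factor $9$ and the moduli $3,6$ is purely speculative and not substantiated by any calculation. In short, what you have written is a reasonable heuristic roadmap in the spirit of the paper's methods, but it is not a proof, and the identity remains open on your end just as it does in the paper.
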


\subsection{Lift-dual of Zagier's Example 11}
Recall the modular triples from Zagier's Example 11 (see \cite[Table 3]{Zagier} or \cite[Example 11]{Wang-rank3}):
\begin{align}
\label{eq-Zagier-exam11}
   & A=\begin{pmatrix} 4 & 2 & -1 \\ 2 & 2 & -1 \\ -1 & -1 & 1 \end{pmatrix}, ~~B_1=\begin{pmatrix} 0 \\ 0 \\ 0 \end{pmatrix}, ~~B_2=\begin{pmatrix} 0 \\ 0 \\ 1/2 \end{pmatrix}, ~~B_3=\begin{pmatrix} 1 \\ 0 \\ 0 \end{pmatrix}, ~~
    B_4=\begin{pmatrix} 2 \\ 1 \\ -1/2 \end{pmatrix}, \nonumber \\ &B_5=\begin{pmatrix} 2 \\ 1 \\ 0 \end{pmatrix}, ~~
    C_1= -\frac{1}{16}, ~~ C_2=\frac{1}{24},~~ C_3=\frac{5}{48}, ~~ C_4=\frac{3}{8},~~ C_5=\frac{7}{16}.
\end{align}
From Table \ref{tab-lift} we know that only the lift of $\mathcal{L}_1$ is meaningful, and we checked that $\mathcal{L}_1(A)$ is indeed positive definite.  We record the lift and lift-dual via $\mathcal{L}_1$ in Table \ref{tab-A-exam3}.

{\small
\begin{table}[htbp]
\centering
 \caption{Lift and lift-dual via $\mathcal{L}_1$ of the modular triples $(A,B,C)$ in \eqref{eq-Zagier-exam11}}
    \label{tab-A-exam3}
 \begin{tabular}{|c|ccccc|}
\hline 
\padedvphantom{I}{3ex}{3ex}
   $\mathcal{L}_1(A)$
    &  \multicolumn{5}{c|}{$\left(\begin{smallmatrix}  4 & 2 & -1 & 1 \\ 2 & 2 & 0 & 1 \\ -1 & 0 & 1 & 0 \\ 1 & 1 & 0 & 1 \end{smallmatrix}\right)$} \\
\hline
\padedvphantom{I}{3ex}{3ex}
   $\mathcal{L}_1(B)$   & $\left(\begin{smallmatrix} 0 \\ 0 \\ 0 \\ 0 \end{smallmatrix}\right)$ & $\left(\begin{smallmatrix} 0 \\ 0 \\ 1/2 \\ 1/2 \end{smallmatrix}\right)$ & $\left(\begin{smallmatrix} 1 \\ 0 \\ 0 \\ 0 \end{smallmatrix}\right)$ & $\left(\begin{smallmatrix} 2 \\ 1 \\ -1/2 \\ 1/2 \end{smallmatrix}\right)$ & $\left(\begin{smallmatrix} 2 \\ 1 \\ 0 \\ 1 \end{smallmatrix}\right)$ \\
 \padedvphantom{I}{1ex}{1ex}
 $\mathcal{L}_1(C)$   & {\tiny $-{1}/{16}$} & {\tiny ${1}/{24}$} &  {\tiny ${5}/{48}$} & {\tiny ${3}/{8}$} & {\tiny ${7}/{16}$ }  \\
 \hline 
 \padedvphantom{I}{3ex}{3ex}
    $\mathcal{D}\mathcal{L}_1(A)$
    & \multicolumn{5}{c|}{$\left(\begin{smallmatrix} 1 & -1 & 1 & 0 \\ -1 & 2 & -1 & -1 \\ 1 & -1 & 2 & 0 \\ 0 & -1 & 0 & 2 \end{smallmatrix}\right)$} \\
\hline
\padedvphantom{I}{2ex}{2ex}
   $\mathcal{D}\mathcal{L}_1(B)$    & $\left(\begin{smallmatrix} 0 \\ 0 \\ 0 \\ 0  \end{smallmatrix}\right)$ &$\left(\begin{smallmatrix} 1/2 \\ -1 \\ 1 \\ 1 \end{smallmatrix}\right)$ & $\left(\begin{smallmatrix} 1 \\ -1 \\ 1 \\ 0 \end{smallmatrix}\right)$ &   $\left(\begin{smallmatrix} 1/2 \\0 \\ 0 \\ 0 \end{smallmatrix}\right)$ & $\left(\begin{smallmatrix} 1 \\ -1 \\ 1 \\ 1 \end{smallmatrix}\right)$   \\
 \padedvphantom{I}{1ex}{1ex}
 $\mathcal{D}\mathcal{L}_1(C)$    & {\tiny $-{5}/{48}$} & {\tiny ${7}/{24}$ } & {\tiny ${11}/{48}$} & {\tiny $-{1}/{24}$ } & {\tiny ${19}/{48}$} \\
    \hline
\end{tabular}
\end{table}
}

We find single-product expressions for the Nahm sum associated with the second and third triples $\mathcal{DL}_1(A,B_i,C_i)$ ($i=2,3$). It seems that more products are needed to express Nahm sums associated with the other triples $\mathcal{DL}_1(A,B_i,C_i)$ ($i=1,4,5$).
\begin{conj}
We have
\begin{align}
&\sum_{i,j,k,l\geq 0} \frac{q^{\frac{1}{2}(i^2+i)+j^2+k^2+l^2-ij+ik-jk-jl-j+k+l}}{(q;q)_i(q;q)_j(q;q)_k(q;q)_l}=3\frac{J_2J_3^3}{J_1^4}, \label{eq-conj-Z11-2} \\
&\sum_{i,j,k,l\geq 0} \frac{q^{i^2+2j^2+2k^2+2l^2-2ij+2ik-2jk-2jl+2i-2j+2k}}{(q^2;q^2)_i(q^2;q^2)_j(q^2;q^2)_k(q^2;q^2)_l}=3\frac{J_6^3}{J_1J_2J_4}. \label{eq-conj-Z11-3}
\end{align}
\end{conj}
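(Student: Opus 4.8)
The plan is to follow the template that succeeds for the proven cases (for instance the theorem on the lift-dual of Zagier's Example~7): sum out the single index whose exponent has Euler shape, recognize what remains as a perturbation of a lower-rank Nahm sum whose product form is already known, absorb the perturbation, and reduce to single-sum Rogers--Ramanujan type identities by the constant term method and Bailey pairs.

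First I would sum over $i$. In \eqref{eq-conj-Z11-2} the $i$-part of the exponent is $\tfrac12(i^2+i)-ij+ik=\binom{i}{2}+i(1+k-j)$, so Euler's identity \eqref{Euler2} gives $\sum_{i\ge0}q^{\binom{i}{2}+i(1+k-j)}/(q;q)_i=(-q^{1+k-j};q)_\infty$, and \eqref{eq-conj-Z11-2} turns into
\begin{align*}
\sum_{j,k,l\ge0}\frac{(-q^{1+k-j};q)_\infty\, q^{\,j^2+k^2+l^2-jk-jl-j+k+l}}{(q;q)_j(q;q)_k(q;q)_l}.
\end{align*}
The remaining quadratic form $j^2+k^2+l^2-jk-jl$ is exactly the one attached to the matrix $A_2$ of the CW Example~2, whose Nahm sums are known to be modular \cite{Cao-Wang2024}; so what is left is a perturbation, by the single factor $(-q^{1+k-j};q)_\infty$, of a product that is already in hand. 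An entirely parallel reduction handles \eqref{eq-conj-Z11-3}, now at base $q^2$: summing over $i$ by \eqref{Euler2} leaves $(-q^{3+2k-2j};q^2)_\infty$ times, again, a perturbation of a modular $A_2$-Nahm sum.

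Next I would dispose of the perturbation factor by splitting the range of summation according to the sign of $j-k$: if $j\le k$ then $(-q^{1+k-j};q)_\infty=(-q;q)_\infty/(-q;q)_{k-j}$, while if $j>k$ then $(-q^{1+k-j};q)_\infty=(-1;q)_{j-k}\,q^{-\binom{j-k}{2}}(-q;q)_\infty$ via $(-q^{1-m};q)_m=(-1;q)_m q^{-\binom{m}{2}}$. Reindexing in each region leaves ordinary $q$-hypergeometric multi-sums, to which I would apply the second layer of tools used in the proven theorems: the summations \eqref{Gauss}--\eqref{Gauss'}, the Bailey-pair lemmas of Section~\ref{sec-pre}, and the single-sum Rogers--Ramanujan type identities \eqref{S. 9}--\eqref{A. 100} (their base-$q^2$ members \eqref{S. 95}, \eqref{A. 97}, \eqref{A. 100} for the companion). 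The leading constant $3$ should emerge either as a $\lim_{a\to0}$ of a $q$-Gauss evaluation, exactly as in the proof of \eqref{thm1-5}, or through a $3$-dissection of $J_3^3/J_1$ (resp.\ $J_6^3/J_2$).

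The hard part will be this last reduction. After the region split the residual sums do not appear to be instances of anything in Slater's list, nor one-step Bailey iterates of a familiar pair, so closing the argument seems to require either a genuinely new Rogers--Ramanujan type identity not reducible to Slater's single sums, or a delicate two-variable constant term evaluation of the kind carried out for \eqref{intro-thm3-10}--\eqref{intro-thm3-12}, whose closed form is not routine to predict; this gap is presumably why \eqref{eq-conj-Z11-2}--\eqref{eq-conj-Z11-3} are stated here only as a conjecture.
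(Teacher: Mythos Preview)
The statement you are attempting to prove is presented in the paper explicitly as a \emph{conjecture}; the paper provides no proof of \eqref{eq-conj-Z11-2}--\eqref{eq-conj-Z11-3} and states plainly, just before the conjecture, ``we are not able to confirm their modularity at this stage.'' There is therefore no paper proof against which to compare your attempt.

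Your proposal is a reasonable heuristic and the first reduction is correct: summing over $i$ via \eqref{Euler2} does collapse \eqref{eq-conj-Z11-2} to a triple sum whose quadratic form is $\tfrac12 n^{\mathrm T}A_2 n$ with $A_2$ the CW~Example~2 matrix, perturbed by the factor $(-q^{1+k-j};q)_\infty$. But you yourself identify the genuine gap: after the region split according to $\mathrm{sgn}(j-k)$, the resulting sums are not recognizable as instances of the Slater list or as single Bailey iterates, and no concrete identity is produced to finish the evaluation. The analogy with the proof of \eqref{thm1-5} is only suggestive; there the $q$-Gauss limit cleanly produces a constant, whereas here the presence of the shifted Pochhammer $(-q^{1+k-j};q)_\infty$ couples the indices in a way that does not visibly uncouple under any of the summations \eqref{Gauss}--\eqref{Gauss'}. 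Your closing paragraph is accurate: this is precisely the obstruction the paper leaves open, and your sketch does not overcome it.
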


\subsection{Lift-dual of Zagier's Example 12}
Recall the modular triples from Zagier's Example 12 (see \cite[Table 3]{Zagier} or \cite[Example 12]{Wang-rank3}):
\begin{equation}\label{eq-Zagier-exam12}
    \begin{split}
        A=\begin{pmatrix} 8 & 4 & 1 \\ 4 & 3 & 0 \\ 1 & 0 & 1 \end{pmatrix},~~ B_1=\begin{pmatrix} 0 \\ -1/2 \\ 1/2 \end{pmatrix}, ~~ B_2=\begin{pmatrix} 2 \\ 1/2 \\ 1/2 \end{pmatrix}, ~~ C_1=\frac{1}{40},~~  C_2=\frac{9}{40}.
    \end{split}
\end{equation}
From Table \ref{tab-lift} we see that only the lift via $\mathcal{L}_1$ is meaningful, and we checked that $\mathcal{L}_1(A)$ is indeed positive definite.  We record the lift and lift-dual of these modular triples via $\mathcal{L}_1$ in Table \ref{tab-A-exam4}. We do not find any single-product expressions for the Nahm sums associated with the triples $\mathcal{DL}_1(A,B_i,C_i)$ ($i=1,2$).

{\small
\begin{table}[H]
\centering
\caption{Lift and lift-dual via $\mathcal{L}_1$ of the modular triples $(A,B,C)$ in \eqref{eq-Zagier-exam12}}
    \label{tab-A-exam4}
\begin{tabular}{|c|cc|c|cc|}
\hline
         \multicolumn{3}{|c|}{{Lift via $\mathcal{L}_1$}} & \multicolumn{3}{c|}{{Lift-dual via $\mathcal{L}_1$}}\\
\hline 
\padedvphantom{I}{3.5ex}{3.5ex}
    $\mathcal{L}_1(A)$   & \multicolumn{2}{c|}{$\left(\begin{smallmatrix}  8 & 4 & 1 & 5 \\ 4 & 3 & 1 & 3 \\ 1 & 1 & 1 & 1 \\ 5 & 3 & 1 & 4 \end{smallmatrix}\right)$}  & $\mathcal{DL}_1(A)$
    & \multicolumn{2}{c|}{$\left(\begin{smallmatrix} 2/3 & -1/3 & 1/3 & -2/3 \\ -1/3 & 5/3 & -2/3 & -2/3 \\ 1/3 & -2/3 & 5/3 & -1/3 \\ -2/3 & -2/3 & -1/3 & 5/3 \end{smallmatrix}\right)$} \\
\hline 
\padedvphantom{I}{3.5ex}{3.5ex}
   $\mathcal{L}_1(B)$
     & $\left(\begin{smallmatrix} 0 \\ -1/2 \\ 1/2 \\ 0 \end{smallmatrix}\right)$ & $\left(\begin{smallmatrix} 2 \\ 1/2 \\ 1/2 \\ 1 \end{smallmatrix}\right)$
     &$\mathcal{DL}_1(B)$
    & $\left(\begin{smallmatrix} 1/3 \\ -7/6 \\ 7/6 \\ 1/6  \end{smallmatrix}\right)$ & $\left(\begin{smallmatrix} 2/3 \\ -5/6 \\ 5/6 \\ -1/6 \end{smallmatrix}\right)$ \\
     \padedvphantom{I}{1ex}{1ex}
   $\mathcal{L}_1(C)$
    & {\tiny ${1}/{40}$} & {\tiny ${9}/{40}$} & $\mathcal{DL}_1(C)$
    & {\tiny ${47}/{120}$}  & {\tiny ${23}/{120}$} \\
    \hline
\end{tabular}
\end{table}
}

\section{Lift-dual of the CW Examples 1--3}\label{sec-CW}

\subsection{Lift-dual of the CW Example 1}
We list the modular triples in \cite[(6.31) and Table 3]{Cao-Wang2024} and their lift and lift-dual in Table \ref{tab-4}.
{\small
\begin{table}[htbp]
\centering
    \caption{Modular triples from \cite[Table 3]{Cao-Wang2024} and their lift and lift-dual via $\mathcal{L}_2$}\label{tab-4}
    \label{CW-thm6.2}
\begin{tabular}{|c|cccccc|}
    \hline 
    \padedvphantom{I}{3ex}{3ex}
    $A$ & \multicolumn{6}{c|}{$\left(\begin{smallmatrix} 1 & 0 & -1/2 \\  0 & 1 & -1/2 \\ -1/2 & -1/2 & 1 \end{smallmatrix}\right)$} \\
    \hline 
    \padedvphantom{I}{2.5ex}{2.5ex}
        $B$ &  $\left(\begin{smallmatrix}  -b \\ b \\ 0 \end{smallmatrix}\right)$  & $\left(\begin{smallmatrix}  0 \\ 0 \\ -1/2 \end{smallmatrix}\right)$ &  $\left(\begin{smallmatrix}  -1/4 \\ 1/4 \\ 1/2 \end{smallmatrix}\right)$ & $\left(\begin{smallmatrix}  1/4 \\ -1/4 \\ 1/2 \end{smallmatrix}\right)$  & $\left(\begin{smallmatrix}  0 \\ -1/2 \\ 0 \end{smallmatrix}\right)$  & $\left(\begin{smallmatrix}  -1/2 \\ 0 \\ 0 \end{smallmatrix}\right)$ \\
        \padedvphantom{I}{1ex}{1ex}
        $C$ & {\tiny $(8b^2-1)/12$ }  & {\tiny $1/12$ }&  {\tiny $1/24$ } & {\tiny  $1/24$ } & {\tiny  $1/{24}$ } & {\tiny  $1/24$ }     \\
        \hline 
        \padedvphantom{I}{3ex}{3ex}
        $B$   &  $\left(\begin{smallmatrix}  0 \\ -1/2 \\ 1/4 \end{smallmatrix}\right)$  &  $\left(\begin{smallmatrix}  -1/2 \\ 0 \\ 1/4 \end{smallmatrix}\right)$   & $\left(\begin{smallmatrix}  0 \\ -1/2 \\ 1/2 \end{smallmatrix}\right)$  &  $\left(\begin{smallmatrix}  -1/2 \\ 0 \\ 1/2 \end{smallmatrix}\right)$   &  $\left(\begin{smallmatrix}  1/2\\ 1/2 \\ -1/2 \end{smallmatrix}\right)$ & $\left(\begin{smallmatrix}  1/2\\ 1/2 \\ 0  \end{smallmatrix}\right)$  \\
         \padedvphantom{I}{1ex}{1ex}
        $C$  & {\tiny  $1/96$} &  {\tiny $1/96$}  & {\tiny $1/24$}     & {\tiny $1/24$}   & {\tiny  $1/12$}  &{\tiny  $1/12$}   \\
        \hline 
        \padedvphantom{I}{4ex}{4ex}
         $\mathcal{L}_2(A)$ &\multicolumn{6}{c|}{$\left(\begin{smallmatrix}  1 & 0 & 1/2 & 1/2 \\  0 & 1 & -1/2 & -1/2 \\   1/2 & -1/2 & 1 & 1/2 \\ 1/2 & -1/2  & 1/2 & 1 \end{smallmatrix}\right)$} \\
    \hline 
    \padedvphantom{I}{3.5ex}{3.5ex}
    $\mathcal{L}_2(B)$ &  $\left(\begin{smallmatrix}  -b \\ b \\ 0 \\ -b \end{smallmatrix}\right)$  & $\left(\begin{smallmatrix}  0 \\ 0 \\ -1/2 \\ -1/2 \end{smallmatrix}\right)$ &  $\left(\begin{smallmatrix}  -1/4 \\ 1/4 \\ 1/2 \\ 1/4 \end{smallmatrix}\right)$ & $\left(\begin{smallmatrix}  1/4 \\ -1/4 \\ 1/2 \\ 3/4 \end{smallmatrix}\right)$  & $\left(\begin{smallmatrix}  0 \\ -1/2 \\ 0 \\ 0 \end{smallmatrix}\right)$  & $\left(\begin{smallmatrix}  -1/2 \\ 0 \\ 0 \\ -1/2 \end{smallmatrix}\right)$ \\
     \padedvphantom{I}{1ex}{1ex}
    $\mathcal{L}_2(C)$ & {\tiny $(8b^2-1)/12$}   & {\tiny $1/12$ } & {\tiny $1/24$} & {\tiny  $1/24$ } & {\tiny  $1/{24}$ } &  {\tiny $1/24$ }      \\
    \hline 
    \padedvphantom{I}{3.5ex}{3.5ex}
    $\mathcal{L}_2(B)$&  $\left(\begin{smallmatrix}  0 \\ -1/2 \\ 1/4 \\ 1/4 \end{smallmatrix}\right)$  &  $\left(\begin{smallmatrix}  -1/2 \\ 0 \\ 1/4 \\ -1/4 \end{smallmatrix}\right)$   & $\left(\begin{smallmatrix}  0 \\ -1/2 \\ 1/2 \\ 1/2 \end{smallmatrix}\right)$  &  $\left(\begin{smallmatrix}  -1/2 \\ 0 \\ 1/2 \\ 0 \end{smallmatrix}\right)$   &  $\left(\begin{smallmatrix}  1/2\\ 1/2 \\ -1/2 \\ 0 \end{smallmatrix}\right)$ & $\left(\begin{smallmatrix}  1/2\\ 1/2 \\ 0 \\ 1/2 \end{smallmatrix}\right)$  \\
     \padedvphantom{I}{1ex}{1ex}
    $\mathcal{L}_2(C)$  & {\tiny $1/96$ } & {\tiny $1/96$}  &{\tiny  $1/24$ }     & {\tiny $1/24$}   & {\tiny  $1/12$}  & {\tiny $1/12$}   \\
     \hline 
     \padedvphantom{I}{3ex}{3ex}
    $\mathcal{DL}_2(A)$ &\multicolumn{6}{c|}{$\left(\begin{smallmatrix}  2 & -1 & -1 & -1 \\  -1 & 2 & 1 & 1 \\  -1 & 1 & 2 & 0  \\ -1 & 1  & 0 & 2 \end{smallmatrix}\right)$} \\
    \hline 
    \padedvphantom{I}{3ex}{3ex}
        $\mathcal{DL}_2(B)$ &  $\left(\begin{smallmatrix}  -2b \\ 2b \\ 2b \\ 0 \end{smallmatrix}\right)$  & $\left(\begin{smallmatrix}  1 \\ -1 \\ -1 \\ -1  \end{smallmatrix}\right)$ &  $\left(\begin{smallmatrix}  -3/2 \\ 3/2 \\ 3/2 \\ 1 \end{smallmatrix}\right)$ & $\left(\begin{smallmatrix}  -1/2 \\ 1/2 \\ 1/2 \\ 1  \end{smallmatrix}\right)$  & $\left(\begin{smallmatrix}  1/2 \\ -1 \\ -1/2 \\ -1/2 \end{smallmatrix}\right)$  & $\left(\begin{smallmatrix}  -1/2 \\ 0 \\ 1/2 \\ -1/2  \end{smallmatrix}\right)$ \\
         \padedvphantom{I}{1ex}{1ex}
        $\mathcal{DL}_2(C)$ &  {\tiny $(16b^2-1)/12$}   & {\tiny $1/4$}   & {\tiny $2/3$} &  {\tiny $1/6$ } &  {\tiny $1/24$ } &  {\tiny $1/24$ }     \\
        \hline 
        \padedvphantom{I}{3ex}{3ex}
        $\mathcal{DL}_2(B)$   &  $\left(\begin{smallmatrix}   0 \\ -1/2 \\ 0 \\ 0  \end{smallmatrix}\right)$  &  $\left(\begin{smallmatrix}  -1 \\ 1/2 \\ 1 \\ 0  \end{smallmatrix}\right)$   & $\left(\begin{smallmatrix}  -1/2 \\ 0 \\ 1/2 \\ 1/2  \end{smallmatrix}\right)$  &  $\left(\begin{smallmatrix}   -3/2 \\ 1 \\ 3/2 \\ 1/2 \end{smallmatrix}\right)$   &  $\left(\begin{smallmatrix}   1 \\ 0 \\ -1 \\ 0  \end{smallmatrix}\right)$ & $\left(\begin{smallmatrix}  0 \\ 1 \\ 0 \\ 1  \end{smallmatrix}\right)$  \\
         \padedvphantom{I}{1ex}{1ex}
        $\mathcal{DL}_2(C)$  & {\tiny $-5/96$} &{\tiny  $19/96$ }  &  {\tiny $1/24$}     & {\tiny  $13/24$ }   & {\tiny  $1/4$ }  &{\tiny  $1/4$ }   \\
        \hline
\end{tabular}
\end{table}
}

Applying $\mathcal{L}_1$ or $\mathcal{L}_2$ will lift them to essentially the same set of rank four modular triples (with the first two rows/columns in the matrix interchanged). In contrast, from Table \ref{tab-lift} we see that $\mathcal{L}_3(A)$ is a singular matrix. Hence, we only discuss the lift via the operator $\mathcal{L}_2$.

Note that $n_3$ and $n_4$ are symmetric to each other in the quadratic form generated by $\mathcal{L}_2(A)$ and $\mathcal{DL}_2(A)$. Hence we can get some additional modular triples for free by replacing $\mathcal{L}_2(B)=(b_1,b_2,b_3,b_4)^\mathrm{T}$ in Table \ref{tab-4} with $(b_1,b_2,b_4,b_3)^\mathrm{T}$. Similarly, we can get candidates for free simply by interchanging the third and fourth coordinates in the vector $\mathcal{DL}_2(B)$.

We now confirm the modularity of the candidates $\mathcal{DL}_2(A,B,C)$ in Table \ref{tab-4}.
\begin{theorem}\label{CW-thm6.2-ij3-in}
Let
\begin{align}
F(u,v,w,t;q^2):=\sum_{i,j,k,l\geq 0} \frac{u^iv^jw^kt^lq^{2i^2+2j^2+2k^2+2l^2-2ij-2ik-2il+2jk+2jl}}{(q^2;q^2)_i(q^2;q^2)_j(q^2;q^2)_k(q^2;q^2)_l}.
\end{align}
We have
\begin{align}
F(u,v,w,t;q^2)&=F(u,v,t,w;q^2), \label{thm3-symmetry} \\
   F(q^{-2},q,q^2,1;q^2)&=\frac{1}{J_{1}J_{2}J_{16}}\Big(2\frac{J_{8}^{2}J_{1,8}J_{6,16}}{J_{4}}+\frac{J_{4}^{5}J_{3,8}J_{2,16}}{J_{2}^{2}J_{8}^{2}}\Big), \label{thm3-1} \\
   F(q^{-1},1,q,q^{-1};q^2)&=2\frac{J_{2}^{2}}{J_{1}^{2}},  \label{thm3-2}\\
   F(q^{-3},q^{2},q^{3},q;q^2)&=q^{-1}\frac{J_{2}^{2}}{J_{1}^{2}},  \label{thm3-3}\\
   F(q,q^{-2},q^{-1},q^{-1};q^2)&=2\frac{J_{2}^2}{J_{1}^2},  \label{thm3-4}\\
   F(1,q^{-1},1,1;q^2) &=\frac{J_{4}^5J_{1,8}J_{6,16}}{J_{1}J_{2}^3J_{8}^2J_{16}}+2q\frac{J_{8}^2J_{3,8}J_{2,16}}{J_{1}J_{2}J_{4}J_{16}},  \label{thm3-5}\\
   F(q^{-1},1,q,q;q^2) &=\frac{J_{2}^{2}}{J_{1}^{2}}, \label{thm3-6}\\
   F(q^{-3},q^3,q^3,q^{2};q^2)&=q^{-1}\frac{J_{6}^2}{J_{1}J_{3}},  \label{thm3-7}\\
   F(q^{-1},q,q,q^2;q^2)&=\frac{J_{6}^2}{J_{1}J_{3}},  \label{thm3-8}\\
   F(u,1/u,1/u,1;q^2)&=\frac{1}{J_{2}^{2}}\Big((-q^2u,-q^2/u,q^4;q^4)_{\infty}(-q^6u,-q^6/u,q^{12};q^{12})_{\infty} \nonumber\\
&\quad +q^2(-u,-q^4/u,q^4;q^4)_{\infty}(-q^{12}u,-1/u,q^{12};q^{12})_{\infty}\Big),  \label{thm3-9}\\
   F(1,q^2,1,q^2;q^2)&=\frac{1}{3}\frac{J_{12}^{2}}{J_{2}^{2}J_{24}}\Big(\frac{J_{4}^{5}J_{4,24}}{J_{2}^{2}J_{8}^{2}J_{2,12}}+\frac{2J_{8}^{3}}{J_{4}^{2}} \Big), \label{thm3-10}\\
   F(q^2,1,q^{-2},1;q^2)&=\frac{2}{3}\frac{J_{12}^{2}}{J_{2}^{2}J_{24}}\Big(\frac{J_{4}^{5}J_{4,24}}{J_{2}^{2}J_{8}^{2}J_{2,12}}+\frac{2J_{8}^{3}}{J_{4}^{2}} \Big), \label{thm3-11} \\
    F(q^2,q^{-2},q^{-2},q^{-2};q^2)&=6\frac{J_6^3}{J_2^3}. \label{thm3-12}
\end{align}
\end{theorem}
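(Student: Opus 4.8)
The symmetry \eqref{thm3-symmetry} is immediate: the exponent of $q$ in the summand is unchanged under the interchange $k\leftrightarrow l$ (the variables $k$ and $l$ enter only through the blocks $2k^2-2ik+2jk$ and $2l^2-2il+2jl$, which swap into one another), while $w^kt^l$ and $(q^2;q^2)_k(q^2;q^2)_l$ transform correspondingly, so relabelling the dummy indices gives $F(u,v,w,t;q^2)=F(u,v,t,w;q^2)$.

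For the remaining thirteen evaluations the plan is to follow the strategy of \cite{Wang-rank2,Wang-rank3,Cao-Wang2024}: reduce the fourfold sum to a one- or twofold $q$-hypergeometric sum by the constant term method, and then evaluate that sum by the identities collected in Section~\ref{sec-pre}. The obstruction to a direct summation is that each of $i,j,k,l$ carries a pure square $2n^2$ in the quadratic form, so no index can be summed off by Euler's identities \eqref{Euler1}--\eqref{Euler2}. To get around this I would introduce two auxiliary variables $z_1,z_2$ and, using the Jacobi triple product \eqref{Jacobi} to write each offending $q^{2n^2}$ as a constant term in a theta series, rewrite $F$ as $\CT_{z_1,z_2}$ of a product of four single-variable generating functions, the powers of $z_1,z_2$ being chosen so that the constant-term extraction reproduces exactly the cross terms $-2ij,-2ik,-2il,2jk,2jl$. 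The fact that makes \emph{two} variables suffice, and dictates their natural choice, is that in the quadratic form the combination $k-l$ decouples from $i$ and $j$: indeed $2k^2+2l^2=(k+l)^2+(k-l)^2$ and only $k+l$ meets the cross terms $-2i(k+l)+2j(k+l)$. After expanding by \eqref{q-binomial} and \eqref{Jacobi}, the constant-term extraction collapses two of the four summations, and what remains is a double sum whose inner sum, for each of the thirteen specializations of $(u,v,w,t)$, is one of the summable series of Section~\ref{sec-pre}, the outer sum being again of Rogers--Ramanujan type.

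The routine cases should be \eqref{thm3-2}, \eqref{thm3-3}, \eqref{thm3-4}, \eqref{thm3-6}: here both the inner and the outer sum reduce to instances of the $q$-Gauss sum \eqref{Gauss} (after sending an auxiliary parameter to $0$, exactly as in the proof of \eqref{thm1-5}), and one lands on a single quotient $J_2^2/J_1^2$ or $J_6^2/(J_1J_3)$ up to a monomial. For \eqref{thm3-1}, \eqref{thm3-5}, \eqref{thm3-7}, \eqref{thm3-8}, \eqref{thm3-9} the inner/outer sums instead call for Bailey's and Gauss' ${}_2\phi_2$ evaluations \eqref{Bailey's}, \eqref{Gauss'} and the Slater-list identities \eqref{S. 9}, \eqref{S. 29}, \eqref{S. 38}, \eqref{S. 39}, \eqref{S. 50}; the parity/residue splitting forced by the theta expansions produces two pieces, and collecting them and re-collapsing the resulting $q$-products by \eqref{Jacobi} yields the two-term right-hand sides. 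The genuinely hard cases are \eqref{thm3-10}, \eqref{thm3-11}, \eqref{thm3-12}, where the constant-term reduction alone does not leave a summable series. For these I would, after the constant-term step, recognize the surviving sum as governed by a Bailey pair relative to a power of $q$, construct that seed pair explicitly, and move its base by Lovejoy's and Mc~Laughlin's shifting lemmas \eqref{Lovejoy-a-aq-1}, \eqref{W-BP-a-aq}, \eqref{McLaughlin-a-a/q} together with the limiting iterate \eqref{Bailey's lemma-1.1}--\eqref{Bailey's lemma-1.2}, until the sum becomes one to which \eqref{S. 28}, \eqref{S. 29}, \eqref{S. 48}, \eqref{S. 50}, \eqref{A. 97}, \eqref{A. 100} apply. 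Since the right-hand sides of \eqref{thm3-10} and \eqref{thm3-11} are the same combination of five theta quotients up to the prefactors $\tfrac13$ and $\tfrac23$, these two should be proved in tandem, the common object being split by a cube-root-of-unity dissection exactly as in \eqref{intro-thm3-10}--\eqref{intro-thm3-11}.

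The main obstacle I anticipate is not any individual summation but the \emph{design} of the two-variable constant-term identity: the exponents of $z_1,z_2$ in the four generating functions must be pinned down so that (a) the five cross terms of the quadratic form are reproduced, (b) after the Jacobi-triple-product expansion the two residual sums simultaneously telescope to known series for all thirteen boundary specializations, and (c) the residue splits forced by the theta expansions recombine, via \eqref{Jacobi}, into the stated $J$-products. Extracting all thirteen evaluations from essentially one master identity — and in particular producing the delicate two-term right-hand sides of \eqref{thm3-1}, \eqref{thm3-5}, \eqref{thm3-9}, \eqref{thm3-10}, \eqref{thm3-11} — is where the real work lies; for \eqref{thm3-10}--\eqref{thm3-12} the extra Bailey-pair layer compounds the difficulty, since the seed pair and the number of Bailey iterations must be tuned so that the base ends at precisely the value for which a Slater identity is available.
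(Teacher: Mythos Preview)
Your high-level strategy matches the paper almost exactly: the proof does rewrite the quadratic form as $(k-l)^2+(i-j-k-l)^2+i^2+j^2$ (equivalent to your observation that $k-l$ decouples), applies the two-variable constant-term method, collapses via the $q$-binomial theorem and $q$-Gauss to a single $\CT_y$ expression, and then for each specialization extracts the constant term by a parity split and Slater-type identities, with Bailey pairs reserved for \eqref{thm3-9}--\eqref{thm3-12}.

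Several of your detailed predictions, however, would lead you astray. First, \eqref{thm3-7} and \eqref{thm3-8} do \emph{not} reduce to a ${}_2\phi_2$ or a Slater single sum: after the constant-term step and parity recombination the paper lands on a genuine double sum, which is evaluated by the special case \eqref{eq-Bressoud-case} of Bressoud's identity. Second, there is no cube-root-of-unity dissection producing the $\tfrac13,\tfrac23$ in \eqref{thm3-10}--\eqref{thm3-11}. For \eqref{thm3-10} the Bailey-pair machinery (seed $\alpha_n^{(0)}=2q^{n^2}$ pushed through \eqref{Lovejoy-a-aq-1} and \eqref{Bailey's lemma-1.1}) leaves \emph{weighted} theta sums of the form $\sum (2n+1)q^{6n^2+4n}$ and $\sum n\,q^{6n^2+2n}$, and it is the derivative identities \eqref{Berndt-1}--\eqref{Berndt-2} that introduce the factor $1/3$. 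Identity \eqref{thm3-11} is then not proved in tandem but by the index shift $i\mapsto i+1$, which gives the relation $F(q^{-2},q^2,1,q^2;q^2)-F(1,q^2,1,q^2;q^2)=F(q^2,1,q^{-2},1;q^2)$, and the first term is read off from \eqref{thm3-9} via \eqref{thm3-symmetry}. Finally, the Slater identities you list for the hard cases (\eqref{S. 28}, \eqref{S. 48}, \eqref{A. 97}, \eqref{A. 100}) play no role in this theorem; the ones actually needed are \eqref{S. 9}, \eqref{S. 38}, \eqref{S. 39}, \eqref{S. 85} together with \eqref{Euler2} for \eqref{thm3-1}--\eqref{thm3-6}, and for \eqref{thm3-12} a further Bailey chain through both \eqref{W-BP-a-aq} and \eqref{McLaughlin-a-a/q}.
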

\begin{proof}
The identity \eqref{thm3-symmetry} is trivial from the definition since $k$ and $l$ are symmetric to each other.

By (\ref{Euler1})--\eqref{Jacobi}, we have
\begin{align}\label{CW-thm6.2-ij3-in-1}
&F(u,v,w,t;q^2)=
\sum_{i,j,k,l\geq 0} \frac{u^iv^jw^kt^lq^{(k-l)^2+(i-j-k-l)^2+i^2+j^2}}{(q^2;q^2)_i(q^2;q^2)_j(q^2;q^2)_k(q^2;q^2)_l}  \\
&=\CT_{x}\CT_{y}\sum_{i\geq 0}\frac{(uy)^iq^{i^2}}{(q^2;q^2)_i}\sum_{j\geq 0}\frac{(v/y)^jq^{j^2}}{(q^2;q^2)_j}\sum_{k\geq 0}\frac{(wx/y)^k}{(q^2;q^2)_k}\sum_{l\geq 0}\frac{(t/xy)^l}{(q^2;q^2)_l} \nonumber\\
&\quad \times \sum_{m=-\infty}^{\infty}x^{-m}q^{m^2}\sum_{n=-\infty}^{\infty}y^{-n}q^{n^2} \nonumber\\
&=\CT_{y}(-quy,-qv/y;q^2)_{\infty}(-qy,-q/y,q^2;q^2)_{\infty} \CT_{x}\frac{(-qx,-q/x,q^2;q^2)_{\infty}}{(wx/y,t/(xy);q^2)_{\infty}} \nonumber\\
&=\CT_{y} (-quy,-qv/y;q^2)_{\infty}(-qy,-q/y,q^2;q^2)_{\infty} (q^2;q^2)_\infty \nonumber \\
&\qquad \times \CT_x \sum_{i=0}^\infty \frac{(-qy/w;q^2)_i(wx/y)^i}{(q^2;q^2)_i} \sum_{j=0}^\infty \frac{(-qy/t;q^2)_j(t/(xy))^j}{(q^2;q^2)_j} \quad \text{(by \eqref{q-binomial})} \nonumber \\
&=\CT_y (-quy,-qv/y;q^2)_{\infty}(-qy,-q/y,q^2,q^2;q^2)_{\infty} \sum_{i=0}^\infty \frac{(-qy/w,-qy/t;q^2)_i(tw/y^2)^i}{(q^2,q^2;q^2)_i} \nonumber \\
&=\CT_{y}\frac{(-quy,-qv/y,-qw/y,-qt/y;q^2)_{\infty}(-qy,-q/y,q^2;q^2)_{\infty}}{(wt/y^2;q^2)_{\infty}}. \quad \text{(by \eqref{Gauss})} \nonumber
\end{align}

(1) Let $v=1/(qu)$, $w=1/u$ and $t=1/(q^2u)$ in \eqref{CW-thm6.2-ij3-in-1}. We have
\begin{align}
&F(u,1/(qu),1/u,1/(q^2u);q^2) \nonumber \\
&=\CT_{y}\frac{(-quy,-1/(uy),-q/(uy),-1/(quy);q^2)_{\infty}(-qy,-q/y,q^2;q^2)_{\infty}}{(1/(q^2u^2y^2);q^2)_{\infty}} \nonumber\\
&=\CT_{y}\frac{(-quy,-q/(uy);q^2)_{\infty}(-qy,-q/y,q^2;q^2)_{\infty}}{(1/(quy);q)_{\infty}} \nonumber\\
&=\frac{1}{(q^2;q^2)_\infty}\CT_{y}\sum_{i\geq 0}\frac{(quy)^{-i}}{(q;q)_{i}}\sum_{m=-\infty}^{\infty}(uy)^{m}q^{m^2} \sum_{n=-\infty}^\infty y^{-n}q^{n^2}  \nonumber\\
&=\frac{1}{(q^2;q^2)_\infty}\sum_{i\geq 0}\frac{(qu)^{-i}}{(q;q)_{i}}\sum_{m=-\infty}^{\infty}u^{m}q^{m^2+(m-i)^2} \nonumber\\
&=\frac{1}{(q^2;q^2)_\infty}\Big(\sum_{i\geq 0}\frac{u^{-i}q^{2i^2-2i}}{(q;q)_{2i}}\sum_{m=-\infty}^{\infty}u^{m-i}q^{2(m-i)^2}  \nonumber \\
&\qquad +
\sum_{i\geq 0}\frac{u^{-i-1}q^{2i^2}}{(q;q)_{2i+1}}\sum_{m=-\infty}^{\infty}u^{m-i}q^{2(m-i)^2-2(m-i)}\Big) \nonumber\\
&=\frac{1}{(q^2;q^2)_\infty}\Big((-q^2u,-q^2/u,q^4;q^4)_{\infty}\sum_{i\geq 0}\frac{u^{-i}q^{2i^2-2i}}{(q;q)_{2i}} \nonumber \\
&\qquad +
(-u,-q^4/u,q^4;q^4)_{\infty}\sum_{i\geq 0}\frac{u^{-i-1}q^{2i^2}}{(q;q)_{2i+1}}\Big). \label{thm3-case1-start}
\end{align}

Setting $u=q^{-2}$ in \eqref{thm3-case1-start} and then using  \eqref{S. 38} and \eqref{S. 39}, we obtain \eqref{thm3-1}.

Setting $u=q^{-1}$ in \eqref{thm3-case1-start} and then using \eqref{S. 9} and \eqref{S. 85}, we obtain \eqref{thm3-2}.

Setting $u=q^{-3}$, we have
\begin{align}
&F(q^{-3},q^{2},q^{3},q;q^2) \nonumber \\
&=\frac{1}{(q^2;q^2)_\infty}\Big((-q^5,-q^{-1},q^4;q^4)_{\infty}\sum_{i\geq 0}\frac{q^{2i^2+i}}{(q;q)_{2i}}+(-q^7,-q^{-3},q^4;q^4)_{\infty}\sum_{i\geq 0}\frac{q^{2i^2+3i+3}}{(q;q)_{2i+1}}\Big) \nonumber \\
&=q^{-1}\frac{(-q,-q^3,q^4;q^4)_{\infty}}{(q^2;q^2)_\infty}\sum_{i\geq 0}\frac{q^{(i^2+i)/2}}{(q;q)_{i}}=q^{-1}\frac{J_{2}^{2}}{J_{1}^{2}}.
\end{align}
Here for the last equality we used \eqref{Euler2}. This proves \eqref{thm3-3}.

(2) Let $v=1/qu$, $w=1/u$ and $t=1/u$ in \eqref{CW-thm6.2-ij3-in-1}. We have
\begin{align}
&F(u,1/(qu),1/u,1/u;q^2) \nonumber \\
&=\CT_{y}\frac{(-quy,-1/(uy),-q/(uy),-q/(uy);q^2)_{\infty}(-qy,-q/y,q^2;q^2)_{\infty}}{(1/(u^2y^2);q^2)_{\infty}} \nonumber\\
&=\CT_{y}\frac{(-quy,-q/(uy);q^2)_{\infty}(-qy,-q/y,q^2;q^2)_{\infty}}{(1/(uy);q)_{\infty}} \nonumber\\
&=\frac{1}{(q^2;q^2)_\infty}\CT_{y}\sum_{i\geq 0}\frac{(uy)^{-i}}{(q;q)_{i}}\sum_{m=-\infty}^{\infty}(uy)^{m}q^{m^2} \sum_{n=-\infty}^{\infty} y^{-n}q^{n^2} \nonumber\\
&=\frac{1}{(q^2;q^2)_\infty}\sum_{i\geq 0}\frac{u^{-i}}{(q;q)_{i}}\sum_{m=-\infty}^{\infty}u^{m}q^{m^2+(m-i)^2} \nonumber\\
&=\frac{1}{(q^2;q^2)_\infty}\Big(\sum_{i\geq 0}\frac{u^{-i}q^{2i^2}}{(q;q)_{2i}}\sum_{m=-\infty}^{\infty}u^{m-i}q^{2(m-i)^2} \nonumber \\
&\qquad +
\sum_{i\geq 0}\frac{u^{-i-1}q^{2i^2+2i+1}}{(q;q)_{2i+1}}\sum_{m=-\infty}^{\infty}u^{m-i}q^{2(m-i)^2-2(m-i)}\Big) \nonumber\\
&=\frac{1}{(q^2;q^2)_\infty}\Big((-q^2u,-q^2/u,q^4;q^4)_{\infty}\sum_{i\geq 0}\frac{u^{-i}q^{2i^2}}{(q;q)_{2i}} \nonumber \\
&\qquad +
(-u,-q^4/u,q^4;q^4)_{\infty}\sum_{i\geq 0}\frac{u^{-i-1}q^{2i^2+2i+1}}{(q;q)_{2i+1}}\Big). \label{thm3-2-start}
\end{align}

Setting $u=q$ and then using \eqref{S. 9} and \eqref{S. 85}, we obtain \eqref{thm3-4}.

Setting $u=1$ and then using \eqref{S. 38} and \eqref{S. 39}, we obtain \eqref{thm3-5}.

Setting $u=q^{-1}$ in \eqref{thm3-2-start}, we have
\begin{align}
&F(q^{-1},1,q,q;q^2) \nonumber\\
&=\frac{1}{(q^2;q^2)_\infty}\Big((-q,-q^{3},q^4;q^4)_{\infty}\sum_{i\geq 0}\frac{q^{2i^2+i}}{(q;q)_{2i}}+(-q^5,-q^{-1},q^4;q^4)_{\infty}\sum_{i\geq 0}\frac{q^{2i^2+3i+2}}{(q;q)_{2i+1}}\Big) \nonumber\\
&=\frac{(-q,-q^3,q^4;q^4)_{\infty}}{(q^2;q^2)_\infty}\sum_{i\geq 0}\frac{q^{(i^2+i)/2}}{(q;q)_{i}}=\frac{J_{2}^{2}}{J_{1}^{2}}.
\end{align}
Here for the last equality we used \eqref{Euler2}. This proves \eqref{thm3-6}.

(3) By \eqref{CW-thm6.2-ij3-in-1}, we have
\begin{align}
&F(q^{-3},q^3,q^3,q^{2};q^2)=\CT_{y}\frac{(-y/q^2,-q^4/y,-q^4/y,-q^3/y;q^2)_{\infty}(-qy,-q/y,q^2;q^2)_{\infty}}{(q^5/y^2;q^2)_{\infty}} \nonumber\\
&=\CT_{y}\frac{(-q^3/y;q)_\infty(-y/q^2,-q^4/y;q^2)_{\infty}(-qy,-q/y,q^2;q^2)_{\infty}}{(q^5/y^2;q^2)_{\infty}} \nonumber\\
&=\frac{1}{(q^2;q^2)_\infty}\CT_{y}\sum_{i,j\geq 0}\frac{y^{-i-2j}q^{(i^2+5i)/2+5j}}{(q;q)_{i}(q^2;q^2)_{j}}\sum_{m=-\infty}^{\infty}y^{-m}q^{m^2+3m} \sum_{n=-\infty}^{\infty} y^{n}q^{n^2}  \nonumber\\
&=\frac{1}{(q^2;q^2)_\infty}\sum_{i,j\geq 0}\frac{q^{(i^2+5i)/2+5j}}{(q;q)_{i}(q^2;q^2)_{j}}\sum_{m=-\infty}^{\infty}q^{m^2+3m+(i+2j+m)^2}  \nonumber\\
&=\frac{1}{(q^2;q^2)_\infty}\Big(\sum_{i,j\geq 0}\frac{q^{4i^2+2j^2+4ij+2i+2j}}{(q;q)_{2i}(q^2;q^2)_{j}}\sum_{m=-\infty}^{\infty}q^{2(m+i+j)^2+3(m+i+j)}  \nonumber\\
&\quad +\sum_{i,j\geq 0}\frac{q^{4i^2+2j^2+4ij+6i+4j+4}}{(q;q)_{2i+1}(q^2;q^2)_{j}}\sum_{m=-\infty}^{\infty}q^{2(m+i+j)^2+5(m+i+j)}\Big)  \nonumber\\
&=\frac{1}{(q^2;q^2)_\infty}\Big((-q^5,-q^{-1},q^4;q^4)_{\infty}\sum_{i,j\geq 0}\frac{q^{4i^2+2j^2+4ij+2i+2j}}{(q;q)_{2i}(q^2;q^2)_{j}}  \nonumber\\
&\quad +(-q^7,-q^{-3},q^4;q^4)_{\infty}\sum_{i,j\geq 0}\frac{q^{4i^2+2j^2+4ij+6i+4j+4}}{(q;q)_{2i+1}(q^2;q^2)_{j}}\Big)  \nonumber\\
&=\frac{(-q,-q^3,q^4;q^4)_{\infty}}{(q^2;q^2)_\infty}\sum_{i,j\geq 0}\frac{q^{i^2+2j^2+2ij+i+2j-1}}{(q;q)_{i}(q^2;q^2)_{j}}  \nonumber\\
&=\frac{q^{-1}J_{6}^2}{J_{1}J_{3}}.
\end{align}
Here for the last equality we used the following particular case of Bressoud's identity \cite{Bressoud1980}:
\begin{align}\label{eq-Bressoud-case}
\sum_{i,j\geq 0}\frac{q^{i^2+2j^2+2ij+i+2j}}{(q;q)_{i}(q^2;q^2)_{j}}=\frac{J_{6}^2}{J_{2}J_{3}}.
\end{align}
This proves \eqref{thm3-7}.

(4) By \eqref{CW-thm6.2-ij3-in-1} we have
\begin{align}
&F(q^{-1},q,q,q^2;q^2)=\CT_{y}\frac{(-q^2/y;q)_{\infty}(-y,-q^2/y,-qy,-q/y,q^2;q^2)_{\infty}}{(q^3/y^2;q^2)_{\infty}} \nonumber\\
&=\frac{1}{(q^2;q^2)_\infty}\CT_{y}\sum_{i,j\geq 0}\frac{y^{-i-2j}q^{(i^2+3i)/2+3j}}{(q;q)_{i}(q^2;q^2)_{j}}\sum_{m=-\infty}^{\infty}y^{-m}q^{m^2+m} \sum_{n=-\infty}^{\infty} y^{n}q^{n^2}  \nonumber\\
&=\frac{1}{(q^2;q^2)_\infty}\sum_{i,j\geq 0}\frac{q^{(i^2+3i)/2+3j}}{(q;q)_{i}(q^2;q^2)_{j}}\sum_{m=-\infty}^{\infty}q^{m^2+m+(i+2j+m)^2} \nonumber\\
&=\frac{1}{(q^2;q^2)_\infty}\Big(\sum_{i,j\geq 0}\frac{q^{4i^2+2j^2+4ij+2i+2j}}{(q;q)_{2i}(q^2;q^2)_{j}}\sum_{m=-\infty}^{\infty}q^{2(m+i+j)^2+(m+i+j)}  \nonumber\\
&\quad  +\sum_{i,j\geq 0}\frac{q^{4i^2+2j^2+4ij+6i+4j+3}}{(q;q)_{2i+1}(q^2;q^2)_{j}}\sum_{m=-\infty}^{\infty}q^{2(m+i+j)^2+3(m+i+j)}\Big) \nonumber\\
&=\frac{1}{(q^2;q^2)_\infty}\Big((-q,-q^{3},q^4;q^4)_{\infty}\sum_{i,j\geq 0}\frac{q^{4i^2+2j^2+4ij+2i+2j}}{(q;q)_{2i}(q^2;q^2)_{j}} \nonumber\\
&\quad  +(-q^5,-q^{-1},q^4;q^4)_{\infty}\sum_{i,j\geq 0}\frac{q^{4i^2+2j^2+4ij+6i+4j+3}}{(q;q)_{2i+1}(q^2;q^2)_{j}}\Big) \nonumber\\
&=\frac{(-q,-q^3,q^4;q^4)_{\infty}}{(q^2;q^2)_\infty}\sum_{i,j\geq 0}\frac{q^{i^2+2j^2+2ij+i+2j}}{(q;q)_{i}(q^2;q^2)_{j}}.
\end{align}
Now by \eqref{eq-Bressoud-case} we obtain \eqref{thm3-8}.

(5) Let $w=1/u$ in \eqref{CW-thm6.2-ij3-in-1}. We have
\begin{align}\label{CW-thm6.2-ij3-in-2}
&F(u,v,1/u,t;q^2)=\CT_{y}\frac{(-quy,-qv/y,-q/(uy),-qt/y;q^2)_{\infty}(-qy,-q/y,q^2;q^2)_{\infty}}{(t/(uy^2);q^2)_{\infty}} \nonumber\\
&=\frac{1}{(q^2;q^2)_\infty}\CT_{y}\sum_{i\geq 0}\frac{(t/uy^2)^{i}}{(q^2;q^2)_{i}}\sum_{j\geq 0}\frac{(v/y)^{j}q^{j^2}}{(q^2;q^2)_{j}}\sum_{k\geq 0}\frac{(t/y)^{k}q^{k^2}}{(q^2;q^2)_{k}} \nonumber \\
&\qquad \qquad \times \sum_{m=-\infty}^{\infty}(uy)^{-m}q^{m^2}\sum_{s=-\infty}^{\infty}y^{s}q^{s^2}\nonumber\\
&=\frac{1}{(q^2;q^2)_\infty}\sum_{i,j,k\geq 0}\frac{(t/u)^{i}v^{j}t^{k}q^{j^2+k^2}}{(q^2;q^2)_{i}(q^2;q^2)_{j}(q^2;q^2)_{k}}\sum_{m=-\infty}^{\infty}u^{-m}q^{m^2+(2i+j+k+m)^{2}} \nonumber \\
&=\frac{1}{(q^2;q^2)_\infty}\sum_{i,n\geq 0}\sum_{j=0}^{n}\frac{(t/u)^{i}v^{j}t^{n-j}q^{j^2+(n-j)^2}}{(q^2;q^2)_{i}(q^2;q^2)_{j}(q^2;q^2)_{n-j}}\sum_{m=-\infty}^{\infty}u^{-m}q^{m^2+(2i+n+m)^{2}}\nonumber\\
&=\frac{1}{(q^2;q^2)_\infty}(S_{0}(q)+S_{1}(q)),
\end{align}
where $S_{0}(q)$ and $S_{1}(q)$ correspond to the sum with $n$ even and odd, respectively.

For $n$ even we write $n=2n_1$. We have
\begin{align}\label{CW-thm6.2-ij3-in-2-1}
&S_{0}(q)=\sum_{i,n_{1}\geq 0}\sum_{j=0}^{2n_{1}}\frac{u^{n_{1}}v^{j}t^{i+2n_{1}-j}q^{j^2+(2n_{1}-j)^2}}{(q^2;q^2)_{i}(q^2;q^2)_{j}(q^2;q^2)_{2n_{1}-j}}\sum_{m=-\infty}^{\infty}u^{-(m+i+n_{1})}q^{2(m+i+n_{1})^2+2(i+n_{1})^{2}}\nonumber\\
&=(-q^2u,-q^2/u,q^4;q^4)_{\infty}\sum_{n_{2}=0}^{\infty}\sum_{n_{1}=0}^{n_{2}}\sum_{j=0}^{2n_{1}}\frac{t^{n_{2}}(ut)^{n_{1}}(v/t)^{j}q^{j^2+(2n_{1}-j)^2+2n_{2}^{2}}}{(q^2;q^2)_{n_{2}-n_{1}}(q^2;q^2)_{j}(q^2;q^2)_{2n_{1}-j}}  \nonumber \\
&\qquad \qquad \qquad \qquad \qquad \qquad \qquad \qquad  \qquad  \qquad \qquad \qquad \qquad \qquad\text{(\text{set $i+n_{1}=n_{2}$})}\nonumber\\
&=(-q^2u,-q^2/u,q^4;q^4)_{\infty}T_0(u,v,t;q^2),
\end{align}
where
\begin{align}\label{add-T0}
&T_0(u,v,t;q^2)=\sum_{n_{2}=0}^{\infty}\sum_{n_{1}=0}^{n_{2}}\frac{(uv)^{n_{1}}t^{n_{2}}q^{2n_{1}^{2}+2n_{2}^{2}}}{(q^2;q^2)_{n_{2}-n_{1}}} \times \Big(\sum_{r=0}^{n_{1}}\frac{q^{2r^2}(t/v)^{r}}{(q^2;q^2)_{n_{1}-r}(q^2;q^2)_{n_{1}+r}} \nonumber \\
&\qquad +\sum_{r=1}^{n_{1}}\frac{q^{2r^2}(v/t)^{r}}{(q^2;q^2)_{n_{1}-r}(q^2;q^2)_{n_{1}+r}}\Big).
\end{align}
Here for the last equality in \eqref{CW-thm6.2-ij3-in-2-1} we write $j=n_1-r$ ($0\leq r\leq n_1$) and $j=n_1+r$ ($1\leq r \leq n_1$).

Similarly, for $n$ odd we write $n=2n_1+1$. We have
\begin{align}\label{CW-thm6.2-ij3-in-2-2}
&S_{1}(q)=\sum_{i,n_{1}\geq 0}\sum_{j=0}^{2n_{1}+1}\frac{u^{n_{1}}v^{j}t^{i+2n_{1}-j+1}q^{j^2+(2n_{1}-j+1)^2}}{(q^2;q^2)_{i}(q^2;q^2)_{j}(q^2;q^2)_{2n_{1}-j+1}}\nonumber\\
&\quad\times \sum_{m=-\infty}^{\infty}u^{-(m+i+n_{1})}q^{2(m+i+n_{1})^2+2(m+i+n_{1})+2(i+n_{1})^{2}+2(i+n_{1})+1} \quad \text{(set $i+n_{1}=n_{2}$)}\nonumber\\
&=t(-u,-q^4/u,q^4;q^4)_{\infty}\sum_{n_{2}=0}^{\infty}\sum_{n_{1}=0}^{n_{2}}\sum_{j=0}^{2n_{1}+1}\frac{t^{n_{2}}(ut)^{n_{1}}(v/t)^{j}q^{j^2+(2n_{1}-j+1)^2+2n_{2}^{2}+2n_{2}+1}}{(q^2;q^2)_{n_{2}-n_{1}}(q^2;q^2)_{j}(q^2;q^2)_{2n_{1}-j+1}} \nonumber\\
&=tq^2(-u,-q^4/u,q^4;q^4)_{\infty}T_1(u,v,t;q^2),
\end{align}
where
\begin{align}
T_1(u,v,t;q^2)&=\sum_{n_{2}=0}^{\infty}\sum_{n_{1}=0}^{n_{2}}\frac{(uv)^{n_{1}}t^{n_{2}}q^{2n_{1}^{2}+2n_{1}+2n_{2}^{2}+2n_{2}}}{(q^2;q^2)_{n_{2}-n_{1}}} \sum_{r=0}^{n_{1}}\frac{q^{2r^2+2r}((t/v)^r+(v/t)^{r+1})}{(q^2;q^2)_{n_{1}-r}(q^2;q^2)_{n_{1}+r+1}}.
\end{align}
Here for the last equality in \eqref{CW-thm6.2-ij3-in-2-2} we write $j=n_1-r$ ($0\leq r\leq n_1$) and $j=n_1+r+1$ ($0\leq r \leq n_1$).

Substituting \eqref{CW-thm6.2-ij3-in-2-1} and \eqref{CW-thm6.2-ij3-in-2-2} into \eqref{CW-thm6.2-ij3-in-2}, we have
\begin{align}\label{CW-thm6.2-ij3-in-3}
&F(u,v,1/u,t;q^2)=\frac{1}{(q^2;q^2)_\infty}\Big((-q^2u,-q^2/u,q^4;q^4)_{\infty}T_0(u,v,t;q^2) \nonumber \\
&\qquad +tq^2(-u,-q^4/u,q^4;q^4)_{\infty}T_1(u,v,t;q^2)\Big).
\end{align}

The desired identity \eqref{thm3-9} corresponds to the case $v=1/u$ and $t=1$.

We define a Bailey pair $(\alpha_r^{(0)}(1;q^2),\beta_r^{(0)}(1;q^2))$ with
\begin{align}
\alpha_{0}^{(0)}(1;q^2)=1,\quad \alpha_{r}^{(0)}(1;q^2)=q^{2r^2}(u^r+u^{-r}).
\end{align}
Applying \eqref{Bailey's lemma-1.1} to $(\alpha_r^{(0)},\beta_r^{(0)})$ we obtain a new Bailey pair:
\begin{align}
&\alpha_0^{(1)}(1;q^2)=1, \quad \alpha_r^{(1)}(1;q^2)=q^{4r^2}(u^r+u^{-r}), \\
&\beta_r^{(1)}(1;q^2)=\sum_{k=0}^r \frac{q^{2k^2}}{(q^2;q^2)_{r-k}}\beta_k^{(0)}(1;q^2).
\end{align}
Substituting this Bailey pair into \eqref{Bailey's lemma-1.2}, we deduce that
\begin{align}\label{BP-S1-2th-1}
&T_0(u,1/u,1;q^2)=\sum_{n_{2}=0}^{\infty}\sum_{n_{1}=0}^{n_{2}}\frac{q^{2n_{1}^{2}+2n_{2}^{2}}}{(q^2;q^2)_{n_{2}-n_{1}}}\beta_{n_{1}}^{(0)}(1;q^2)=\frac{1}{(q^2;q^2)_{\infty}}\sum_{n=-\infty}^{\infty}q^{6n^2}u^n \nonumber \\
&=\frac{(-q^6u,-q^6/u,q^{12};q^{12})_\infty}{(q^2;q^2)_\infty}.
\end{align}
Similarly, we define a Bailey pair $(\widetilde{\alpha}_r^{(0)}(q^2;q^2),\widetilde{\beta}_r^{(0)}(q^2;q^2))$ where
\begin{align}
\widetilde{\alpha}_{r}^{(0)}(q^2;q^2)=q^{2r^2+2r}(u^r+u^{-r-1}).
\end{align}
Applying \eqref{Bailey's lemma-1.1} to it to generate a new Bailey pair, and then by \eqref{Bailey's lemma-1.2}, we have
\begin{align}\label{BP-S1-2th-q1}
&T_1(u,1/u,1;q^2)=\frac{1}{1-q^2}\sum_{n_{2}=0}^{\infty}\sum_{n_{1}=0}^{n_{2}}\frac{q^{2n_{1}^{2}+2n_{1}+2n_{2}^{2}+2n_{2}}}{(q^2;q^2)_{n_{2}-n_{1}}}\widetilde{\beta}_{n_{1}}^{(0)}(q^2;q^2) \nonumber \\
&=\frac{1}{(q^2;q^2)_{\infty}}\sum_{n=-\infty}^{\infty}q^{6n^2+6n}u^n=\frac{(-q^{12}u,-1/u,q^{12};q^{12})_\infty}{(q^2;q^2)_\infty}.
\end{align}

Substituting \eqref{BP-S1-2th-1} and \eqref{BP-S1-2th-q1} into \eqref{CW-thm6.2-ij3-in-3}, we obtain \eqref{thm3-9}.

(7) Let $(u,v,t)=(1,q^2,q^2)$ in  \eqref{CW-thm6.2-ij3-in-3}.

Let $(\alpha_n^{(0)}(1;q),\beta_n^{(0)}(1;q))$ be a Bailey pair with
\begin{align}
\alpha_{0}^{(0)}(1;q)=1,\quad \alpha_{n}^{(0)}(1;q)=2q^{n^2}.
\end{align}
Applying \eqref{Lovejoy-a-aq-1} to it, we obtain the Bailey pair
\begin{equation}
\begin{split}
&\alpha_{n}^{(1)}(q;q)=(2n+1)\frac{q^{n^2}(1-q^{2n+1})}{1-q}, \\
&\beta_{n}^{(1)}(q;q)=\beta_{n}^{(0)}(1;q).
\end{split}
\end{equation}
Applying \eqref{Bailey's lemma-1.1} to $(\alpha_n^{(1)},\beta_n^{(1)})$ to generate a new Bailey pair $(\alpha_n^{(2)}(q;q),\beta_n^{(2)}(q;q))$, and then substituting $(\alpha_n^{(2)},\beta_n^{(2)})$ into \eqref{Bailey's lemma-1.2}, we deduce that
\begin{align}\label{BP-LS1-2th-1}
&T_0(1,q,q;q)=\sum_{n_{2}=0}^{\infty}\sum_{n_{1}=0}^{n_{2}}\frac{q^{n_{1}^{2}+n_{1}+n_{2}^{2}+n_{2}}}{(q;q)_{n_{2}-n_{1}}}\beta_{n_{1}}^{(1)}(q;q) \nonumber \\
&=\frac{1}{(q;q)_{\infty}}\sum_{n=-\infty}^{\infty}(2n+1)q^{3n^2+2n}.
\end{align}

 Let $(\widetilde{\alpha}_n^{(0)}(q;q),\widetilde{\beta}_n^{(0)}(q;q))$ be a Bailey pair with
\begin{align}
\widetilde{\alpha}_{n}^{(0)}(q;q)=q^{n^2+n}.
\end{align}
Applying \eqref{Lovejoy-a-aq-1} to it, we obtain the Bailey pair
\begin{equation}
\begin{split}
&\widetilde{\alpha}_{n}^{(1)}(q^2;q)=(n+1)\frac{q^{n^2+n}(1-q^{2n+2})}{1-q^2}, \\
&\widetilde{\beta}_{n}^{(1)}(q^2;q)=\beta_{n}^{(0)}(q;q).
\end{split}
\end{equation}
Applying \eqref{Bailey's lemma-1.1} to $(\widetilde{\alpha}_n^{(1)},\widetilde{\beta}_n^{(1)})$ to generate a new Bailey pair $(\widetilde{\alpha}_n^{(2)}(q^2;q),\widetilde{\beta}_n^{(2)}(q^2;q))$, and then substituting $(\widetilde{\alpha}_n^{(2)},\widetilde{\beta}_n^{(2)})$ into \eqref{Bailey's lemma-1.2}, we deduce that
\begin{align}\label{BP-LS1-2th-2}
&T_1(1,q,q;q)=\frac{2}{1-q}\sum_{n_{2}=0}^{\infty}\sum_{n_{1}=0}^{n_{2}}\frac{q^{n_{1}^{2}+2n_{1}+n_{2}^{2}+2n_{2}}}{(q;q)_{n_{2}-n_{1}}}\widetilde{\beta}_{n_{1}}^{(1)}(q^2;q) \nonumber \\
&=\frac{2}{(q;q)_{\infty}}\sum_{n=0}^{\infty}(n+1)(1-q^{2n+2})q^{3n^2+5n} \nonumber\\
&=\frac{2}{(q;q)_{\infty}}\sum_{n=-\infty}^{\infty}(n+1)q^{3n^2+5n}=-\frac{2}{(q;q)_{\infty}}\sum_{n=-\infty}^{\infty}nq^{3n^2+n-2}.
\end{align}

We recall the following two identities (see e.g. \cite[Corollaries 1.3.21 and 1.3.22]{Berndt2006}):
\begin{align}
\sum_{n=-\infty}^{\infty}(6n+1)q^{3n^2+n}&=\frac{J_{2}^{5}}{J_{4}^{2}},   \label{Berndt-1} \\
\sum_{n=-\infty}^{\infty}(3n+1)q^{3n^2+2n}&=\frac{J_{1}^{2}J_{4}^{2}}{J_{2}}.  \label{Berndt-2}
\end{align}

Substituting \eqref{BP-LS1-2th-1} and \eqref{BP-LS1-2th-2} with $q$ replaced by $q^2$ into \eqref{CW-thm6.2-ij3-in-3}, we deduce that
\begin{align}\label{CW-thm6.2-ij3-in-3-4}
&F(1,q^2,1,q^2;q^2)=\frac{1}{(q^2;q^2)_\infty}\Big( (-q^2,-q^2,q^4;q^4)_\infty T_0(1,q^2,q^2;q^2) \nonumber \\
&\qquad +q^4(-1,-q^4,q^4;q^4)_\infty T_1(1,q^2,q^2;q^2)\Big) \nonumber \\
&=\frac{1}{J_{2}^{2}}\Big(\frac{J_{4}^{5}}{J_{2}^{2}J_{8}^{2}}\sum_{n=-\infty}^{\infty}(2n+1)q^{6n^2+4n}-\frac{4J_{8}^{2}}{J_{4}}\sum_{n=-\infty}^{\infty}nq^{6n^2+2n}\Big) \nonumber\\
&=\frac{1}{J_{2}^{2}}\Big(\frac{J_{4}^{5}}{J_{2}^{2}J_{8}^{2}}\Big(\frac{2}{3}\sum_{n=-\infty}^{\infty}(3n+1)q^{6n^2+4n}+\frac{1}{3}\sum_{n=-\infty}^{\infty}q^{6n^2+4n}\Big) \nonumber\\
&\quad +\frac{2J_{8}^{2}}{3J_{4}}\Big(\sum_{n=-\infty}^{\infty}q^{6n^2+2n}-\sum_{n=-\infty}^{\infty}(6n+1)q^{6n^2+2n} \Big)\Big)\nonumber\\
&=\frac{1}{J_{2}^{2}}\Big(\frac{J_{4}^{5}}{J_{2}^{2}J_{8}^{2}}\Big(\frac{2J_{2}^{2}J_{8}^{2}}{3J_{4}}+\frac{J_{12}^{2}J_{4,24}}{3J_{2,12}J_{24}}\Big) +\frac{2J_{8}^{2}}{3J_{4}}\Big(\frac{J_{8}J_{12}^{2}}{J_{4}J_{24}}-\frac{J_{4}^{5}}{J_{8}^{2}} \Big)\Big) \nonumber\\
&=\frac{J_{12}^{2}}{3J_{2}^{2}J_{24}}\Big(\frac{J_{4}^{5}J_{4,24}}{J_{2}^{2}J_{8}^{2}J_{2,12}}+\frac{2J_{8}^{3}}{J_{4}^{2}} \Big).
\end{align}
Here for the last second equality we used \eqref{Jacobi}, \eqref{Berndt-1} and \eqref{Berndt-2}.
This proves \eqref{thm3-10}.

(8) By definition we have
\begin{align}\label{CW-thm6.2-ij3-in-3-5}
&F(q^{-2},q^2,1,q^2;q^2)-F(1,q^2,1,q^2;q^2) \nonumber\\
&=\sum_{i,j,k,l\geq 0} \frac{q^{2i^2+2j^2+2k^2+2l^2-2ij-2ik-2il+2jk+2jl-2i+2j+2l}(1-q^{2i})}{(q^2;q^2)_i(q^2;q^2)_j(q^2;q^2)_k(q^2;q^2)_l}\nonumber\\
&=\sum_{i,j,k,l\geq 0} \frac{q^{2(i+1)^2+2j^2+2k^2+2l^2-2(i+1)j-2(i+1)k-2(i+1)l+2jk+2jl-2(i+1)+2j+2l}}{(q^2;q^2)_i(q^2;q^2)_j(q^2;q^2)_k(q^2;q^2)_l}\nonumber\\
&=\sum_{i,j,k,l\geq 0} \frac{q^{2i^2+2j^2+2k^2+2l^2-2ij-2ik-2il+2jk+2jl+2i-2k}}{(q^2;q^2)_i(q^2;q^2)_j(q^2;q^2)_k(q^2;q^2)_l}\nonumber\\
&=F(q^2,1,q^{-2},1;q^2).
\end{align}
By \eqref{thm3-symmetry} and \eqref{thm3-9} we have
\begin{align}\label{add-thm3-mid}
F(q^{-2},q^2,1,q^2;q^2)=F(q^{-2},q^2,q^2,1;q^2)=\frac{J_{12}^{2}}{J_{2}^{2}J_{24}}\left(\frac{J_{4}^{5}J_{4,24}}{J_{2}^{2}J_{8}^{2}J_{2,12}}+\frac{2J_{8}^{3}}{J_{4}^{2}} \right).
\end{align}
Substituting \eqref{add-thm3-mid} and \eqref{CW-thm6.2-ij3-in-3-4} into \eqref{CW-thm6.2-ij3-in-3-5}, we obtain \eqref{thm3-11}.

(9) Setting $(u,v,t)=(q^2,q^{-2},q^{-2})$ in \eqref{CW-thm6.2-ij3-in-3}, we have
\begin{align}\label{CW-thm6.2-ij3-in-3-6}
&F(q^2,q^{-2},q^{-2},q^{-2};q^2)=\frac{1}{(q^2;q^2)_\infty}\Big((-1,-q^4,q^4;q^4)_{\infty}T_0(q^2,q^{-2},q^{-2};q^2) \nonumber \\
&\qquad +2(-q^2,-q^2,q^4;q^4)_{\infty}T_1(q^2,q^{-2},q^{-2};q^2)\Big).
\end{align}

Let $(\alpha_n^{(0)}(1;q),\beta_n^{(0)}(1;q))$ be a Bailey pair with
\begin{align}
\alpha_{0}^{(0)}(1;q)=1,\quad \alpha_{n}^{(0)}(1;q)=2q^{n^2}.
\end{align}
Applying \eqref{Bailey's lemma-1.1} to it, we obtain the Bailey pair
\begin{equation}\label{W-BP-a-1}
\begin{split}
&\alpha_{0}^{(1)}(1;q)=1,\quad  \alpha_{n}^{(1)}(1;q)=2q^{2n^2}, \\
&\beta_{n}^{(1)}(1;q)=\sum_{k=0}^{n}\frac{q^{k^2}}{(q;q)_{n-k}}\beta_{k}^{(0)}(1;q).
\end{split}
\end{equation}
Applying \eqref{W-BP-a-aq} to \eqref{W-BP-a-1}, we obtain the Bailey pair:
\begin{equation}\label{W-BP-a-2}
    \begin{split}
        \alpha_n^{(2)}(q;q)&=\frac{(1-q^{2n+1})q^{-n}}{1-q} \Big(1+2\sum_{r=1}^n q^{2r^2}\Big), \\
        \beta_n^{(2)}(q;q)&=q^{-n}\beta_n^{(1)}(1;q).
    \end{split}
\end{equation}
Applying \eqref{McLaughlin-a-a/q} to \eqref{W-BP-a-2}, we obtain the Bailey pair: $\alpha_0^{(3)}(1;q)=1$ and for $n\geq 1$,
\begin{equation}\label{W-BP-a-3}
    \begin{split}
    \alpha_n^{(3)}(1;q)&=q^{-n}\Big(1+2\sum_{r=1}^nq^{2r^2}\Big)-q^n\Big(1+2\sum_{r=1}^{n-1}q^{2r^2}\Big), \\
        \beta_n^{(3)}(1;q)&=\beta_n^{(2)}(q;q).
    \end{split}
\end{equation}
We can now evaluate the first sum in \eqref{CW-thm6.2-ij3-in-3-6}. Applying \eqref{Bailey's lemma-1.2} with the Bailey pair $(\alpha_n^{(3)}(1,q),\beta_n^{(3)}(1,q))$, we deduce that
\begin{align}
&T_0(q,q^{-1},q^{-1};q)
=\sum_{n=0}^\infty q^{n^2}\beta_n^{(3)}(1;q)=\frac{1}{(q;q)_\infty}\sum_{n=0}^\infty q^{n^2}\alpha_n^{(3)}(1;q) \nonumber \\
&=\frac{1}{(q;q)_\infty}\Big(1+ \sum_{n=1}^\infty q^{n^2-n}\big(1+2\sum_{r=1}^n q^{2r^2}\big)-\sum_{n=1}^\infty q^{n^2+n} \big(1+2\sum_{r=1}^{n-1} q^{2r^2}\big)\Big) \nonumber \\
&=\frac{2}{(q;q)_\infty}\Big(1+ \sum_{n=1}^\infty q^{n^2-n}\sum_{r=1}^n q^{2r^2}-\sum_{n=2}^\infty q^{n^2-n}\sum_{r=1}^{n-2} q^{2r^2}\Big) \nonumber \\
&=\frac{2}{(q;q)_\infty}\Big(1+q^2+ \sum_{n=2}^\infty q^{n^2-n}(q^{2n^2}+q^{2(n-1)^2})\Big)\nonumber \\
&=\frac{2}{(q;q)_\infty}\sum_{n=1}^\infty q^{n^2-n}(q^{2n^2}+q^{2(n-1)^2})
\nonumber \\
&=\frac{2}{(q;q)_\infty} \sum_{n=-\infty}^\infty q^{3n^2+n}=2\frac{(-q^2,-q^4,q^6;q^6)_\infty}{(q;q)_\infty}. \label{W-S1-result}
\end{align}

Let $(\widetilde{\alpha}_n^{(0)}(q;q),\widetilde{\beta}_n^{(0)}(q;q))$ be a Bailey pair with
\begin{align}
\widetilde{\alpha}_{n}^{(0)}(q;q)=q^{n^2+n}.
\end{align}
Applying \eqref{Bailey's lemma-1.1} to it, we obtain the Bailey pair
\begin{equation}\label{W-BP-b-1}
\widetilde{\alpha}_{n}^{(1)}(q;q)=q^{2n^2+2n}, \quad \widetilde{\beta}_{n}^{(1)}(q;q)=\sum_{k=0}^{n}\frac{q^{k^2+k}}{(q;q)_{n-k}}\widetilde{\beta}_{k}^{(0)}(q;q).
\end{equation}
Applying  \eqref{McLaughlin-a-a/q} to \eqref{W-BP-b-1}, we obtain the Bailey pair:
\begin{equation}\label{W-BP-b-2}
\begin{split}
\widetilde{\alpha}_0^{(2)}(1;q)&=\widetilde{\alpha}_0^{(1)}(1;q)=1, \\
\widetilde{\alpha}_n^{(2)}(1;q)&=\frac{1-q}{{1-q^{2n+1}}}\widetilde{\alpha}_n^{(1)}(q;q)-\frac{q^{2n-1}(1-q)}{1-q^{2n-1}}\widetilde{\alpha}_{n-1}^{(1)}(q;q), \quad n\geq 1, \\
\widetilde{\beta}_n^{(2)}(1;q)&=\widetilde{\beta}_n^{(1)}(q;q).
\end{split}
\end{equation}
Simplifying the expression of $\widetilde{\alpha}_n^{(2)}(1;q)$, we find that for $n\geq 1$,
\begin{align}
    \widetilde{\alpha}_n^{(2)}(1;q)=\frac{(1-q)q^{2n^2+2n}}{1-q^{2n+1}}-\frac{(1-q)q^{2n^2-1}}{1-q^{2n-1}}.
\end{align}
We can now evaluate the second sum in \eqref{CW-thm6.2-ij3-in-3-6}. Applying \eqref{Bailey's lemma-1.2} with the Bailey pair $(\widetilde{\alpha}_n^{(2)}(1,q),\widetilde{\beta}_n^{(2)}(1,q))$, we deduce that
\begin{align}
&T_1(q,q^{-1},q^{-1};q)=2\sum_{n_{2}=0}^{\infty}\sum_{n_{1}=0}^{n_{2}}\frac{q^{n_{1}^{2}+n_{1}+n_{2}^{2}}}{(q;q)_{n_{2}-n_{1}}} \times\sum_{r=0}^{n_{1}}\frac{q^{r^2+r}}{(q;q)_{n_{1}-r}(q;q)_{n_{1}+r+1}} \nonumber \\
&=\frac{2}{1-q}\sum_{n=0}^\infty q^{n^2}\widetilde{\beta}_n^{(2)}(1;q)=\frac{1}{(1-q)(q;q)_\infty} \sum_{n=0}^\infty q^{n^2}\widetilde{\alpha}_n^{(2)}(1;q) \nonumber \\
&=\frac{2}{(q;q)_\infty} \Big(\sum_{n=0}^\infty \frac{q^{3n^2+2n}}{1-q^{2n+1}}-\sum_{n=1}^\infty \frac{q^{3n^2-1}}{1-q^{2n-1}}\Big) \nonumber \\
&=\frac{2}{(q;q)_\infty} \Big(\sum_{n=0}^\infty \frac{q^{3n^2+2n}}{1-q^{2n+1}}-\sum_{n=0}^\infty \frac{q^{3(n+1)^2-1}}{1-q^{2n+1}}\Big) \nonumber \\
&=\frac{2}{(q;q)_\infty} \sum_{n=0}^\infty q^{3n^2+2n}(1+q^{2n+1}) =\frac{2}{(q;q)_\infty} \sum_{n=-\infty}^\infty q^{3n^2+2n}\nonumber \\
&=2\frac{(-q,-q^5,q^6;q^6)_\infty}{(q;q)_\infty}.\label{W-S2-result}
\end{align}
Substituting \eqref{W-S1-result} and \eqref{W-S2-result} with $q$ replaced by $q^2$ into \eqref{CW-thm6.2-ij3-in-3-6}, and then using the method in \cite{Frye-Garvan} to verify theta function identities, we prove \eqref{thm3-12}.
\end{proof}

\subsection{Lift-dual of the CW Example 2}
Recall the modular triples from \cite[(4.3)]{Cao-Wang2024}:
\begin{equation}\label{eq-Cao-Wang-unsolved}
\begin{split}
&A=\begin{pmatrix} 2 & -1 & -1 \\ -1 & 2 & 0 \\ -1 & 0 & 2 \end{pmatrix},~~ B_1=\begin{pmatrix} -1 \\ 1 \\ 0 \end{pmatrix},~~ B_2=\begin{pmatrix} 0 \\ 0 \\ 0 \end{pmatrix}, ~~ B_3=\begin{pmatrix} -1 \\ 1 \\ 1 \end{pmatrix}, \\
& B_4=\begin{pmatrix} -1 \\ 0 \\ 1 \end{pmatrix}, ~~ C_1=\frac{3}{14}, ~~ C_2=-\frac{1}{14}, ~~ C_3=\frac{5}{14},~~C_4=\frac{3}{14}.
\end{split}
\end{equation}
Here $(B_4,C_4)$ is obtained from $(B_1,C_1)$ for free due to the symmetry of $n_2$ and $n_3$ in the quadratic form $\frac{1}{2}n^\mathrm{T}An$ and was omitted in \cite[(4.3)]{Cao-Wang2024}. Applying the lift operators $\mathcal{L}_i$ ($i=1,2,3$) generate rank four modular triples and their dual are likely to be modular.

First, we record the lift and lift-dual via $\mathcal{L}_1$  in Table \ref{tab-A-exam5-L1}.

{\small
\begin{table}[htbp]
\centering
\caption{Lift and lift-dual via $\mathcal{L}_1$ of the modular triples $(A,B,C)$  in \eqref{eq-Cao-Wang-unsolved}}
    \label{tab-A-exam5-L1}
\begin{tabular}{|c|cccc|}
\hline 
\padedvphantom{I}{3ex}{3ex}
$\mathcal{L}_1(A)$ &  \multicolumn{4}{c|}{$\left(\begin{smallmatrix}  2 & -1 & -1 & -2 \\ -1 & 2 & 1 & 2 \\ -1 & 1 & 2 & 2 \\ -2 & 2 & 2 & 4 \end{smallmatrix}\right)$}   \\
\hline 
\padedvphantom{I}{3ex}{3ex}
$\mathcal{L}_1(B)$ & $\left(\begin{smallmatrix}
    -1 \\ 1 \\  0 \\ 1
\end{smallmatrix}\right)$ & $\left(\begin{smallmatrix}
    0 \\ 0 \\ 0 \\ 0
\end{smallmatrix}\right)$ & $\left(\begin{smallmatrix}
    -1 \\ 1 \\ 1 \\ 2
\end{smallmatrix}\right)$ & $\left(\begin{smallmatrix}
    -1 \\ 0 \\ 1 \\ 1
\end{smallmatrix}\right)$ \\
 \padedvphantom{I}{1ex}{1ex}
$\mathcal{L}_1(C)$ & {\tiny ${3}/{14}$} & {\tiny $-{1}/{14}$} & {\tiny ${5}/{14}$}  & {\tiny ${3}/{14}$} \\
\hline 
\padedvphantom{I}{3.5ex}{3.5ex}
 $\mathcal{DL}_1(A)$ & \multicolumn{4}{c|}{$\left(\begin{smallmatrix}
    1 &0  & 0 & 1/2 \\
    0 & 1 & 0 & -1/2 \\
    0 & 0 & 1 & -1/2 \\
    1/2 & -1/2 & -1/2 & 1
\end{smallmatrix}\right)$} \\
\hline 
\padedvphantom{I}{3ex}{3ex}
$\mathcal{DL}_1(B)$ & $\left(\begin{smallmatrix}
    -1/2 \\ 1/2 \\ -1/2 \\ 0
\end{smallmatrix}\right)$ & $\left(\begin{smallmatrix}
    0 \\ 0 \\ 0 \\ 0
\end{smallmatrix}\right)$ & $\left(\begin{smallmatrix}
    0 \\ 0 \\ 0 \\ 1/2
\end{smallmatrix}\right)$ & $\left(\begin{smallmatrix}
    -1/2 \\ -1/2 \\ 1/2 \\ 0
\end{smallmatrix}\right)$ \\
 \padedvphantom{I}{1ex}{1ex}
 $\mathcal{DL}_1(C)$&  {\tiny ${5}/{42}$} & {\tiny $-{2}/{21}$} & {\tiny $-{1}/{42}$} & {\tiny ${5}/{42}$} \\
 \hline
 \end{tabular}
\end{table}
}

We confirm the modularity of the triples $\mathcal{DL}_1(A,B,C)$ by establishing the following identities.
\begin{theorem}\label{thm-add1}
Let
\begin{align}
   F(u,v,w,t;q^2):=\sum_{i,j,k,l\geq 0} \frac{u^iv^jw^kt^lq^{i^2+j^2+k^2+l^2+(i-j-k)l}}{(q^2;q^2)_i(q^2;q^2)_j(q^2;q^2)_k(q^2;q^2)_l}.
\end{align}
We have
\begin{align}
F(u,v,w,t;q^2)&=F(u,w,v,t;q^2), \label{add-eq-F-symmetry} \\
    F(1,1,1,1;q^2)&=\frac{J_2^6J_{14}J_{28}^5}{J_1^3J_4^3J_{2,28}J_{3,28}J_{7,28}J_{8,28}J_{11,28}J_{12,28}}+4q\frac{J_4^3J_{8,28}}{J_2^4}, \label{add-eq-F-1} \\
    F(1,1,1,q;q^2)&=\frac{J_2^6J_{14}J_{28}^5}{J_1^3J_4^3J_{4,28}J_{5,28}J_{6,28}J_{7,28}J_{8,28}J_{9,28}}+4q^2\frac{J_4^3J_{4,28}}{J_2^4}, \label{add-eq-F-2} \\
    F(q^{-1},q,q^{-1},1;q^2)&=4\frac{J_4^3J_{12,28}}{J_2^4}+\frac{J_2^6J_{14}J_{28}^5}{J_1^3J_4^3J_{1,28}J_{4,28}J_{7,28}J_{10,28}J_{12,28}J_{13,28}}. \label{add-eq-F-3}
\end{align}
\end{theorem}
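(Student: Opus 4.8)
The plan is to mimic the constant term strategy used throughout Section~\ref{sec-CW}, adapted to the quadratic form attached to $\mathcal{DL}_1(A)$. First I would complete the square in the exponent. Writing the quadratic form as
$i^2+j^2+k^2+l^2+(i-j-k)l = \bigl(l+\tfrac{1}{2}(i-j-k)\bigr)^2 + \tfrac{3}{4}i^2+\tfrac{3}{4}j^2+\tfrac{3}{4}k^2-\tfrac{1}{2}ij-\tfrac{1}{2}ik+\tfrac{1}{2}jk$, or better, summing over $l$ last, I would instead sum over $l$ using \eqref{Euler1} after introducing a single auxiliary variable $y$ to encode the linear form $i-j-k$ in the exponent of $l$ via Jacobi's triple product \eqref{Jacobi}. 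Concretely: insert $\sum_{n\in\mathbb Z} y^{-n}q^{n^2}$ and shift so that the cross term $(i-j-k)l$ becomes separable; then the $l$-sum is a plain $q$-exponential $1/(t\,y^{\pm1};q^2)_\infty$-type factor, the $i,j,k$ sums become $(-uq\,y;q^2)_\infty$-type products (each carrying one power of $y$), and one is left with $\CT_y$ of a product of four theta-type $q$-Pochhammers times one denominator factor, exactly as in \eqref{CW-thm6.2-ij3-in-1}. The symmetry \eqref{add-eq-F-symmetry} is then immediate from the symmetry of $j$ and $k$ in the exponent, just as \eqref{thm3-symmetry} was.

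Next, after performing $\CT_y$ one should arrive at a bilateral/unilateral sum in one remaining index (say $i$) whose even and odd parts, after a further $q$-binomial summation via \eqref{Gauss} or \eqref{q-binomial}, reduce to single-sum Nahm-type series. The appearance of modulus~$28$ products on the right-hand sides strongly suggests that the relevant single-sum evaluations are the Rogers identities (S.\ 59)--(S.\ 61), i.e.\ \eqref{S59}--\eqref{S61}, whose products are $J_{2,14}/J_1$, $J_{4,14}/J_1$, $J_{6,14}/J_1$; splitting the remaining sum by parity of the summation index typically converts a modulus-$14$ identity into a modulus-$28$ product after one uses Jacobi triple product to factor $J_{a,14}$ into $J_{*,28}$ pieces. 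So the core of the proof for \eqref{add-eq-F-1}--\eqref{add-eq-F-3} is: reduce $F$ at the three specializations to a combination of (S.\ 59)--(S.\ 61) (or close relatives such as (S.\ 31)--(S.\ 33) in \eqref{S31}--\eqref{S33}), then assemble the $\bmod\,28$ products; the leftover term $4q^{c}J_4^3 J_{*,28}/J_2^4$ should come from the even-index (respectively odd-index) half of one of these evaluations, where $1/J_2^4$ reflects the $1/(q^2;q^2)_\infty$ prefactor multiplied by a further $1/(q^2;q^2)_\infty$ produced by the $q$-Gauss step, as in \eqref{CW-thm6.2-ij3-in-1}.

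As an alternative, should the direct constant-term reduction not land cleanly on Slater's list, I would invoke Bailey-pair machinery exactly as in parts (7)--(9) of the proof of Theorem~\ref{CW-thm6.2-ij3-in}: build a seed Bailey pair with $\alpha_n$ supported on $u^{\pm n}q^{cn^2}$, push it through \eqref{Bailey's lemma-1.1}, possibly combined with \eqref{Lovejoy-a-aq-1} or \eqref{McLaughlin-a-a/q}, then apply \eqref{Bailey's lemma-1.2} to collapse the double sum into a single theta function, and finally recognize the resulting theta combination as a $\bmod\,28$ product via Jacobi's triple product (or, if messy, by the mechanical theta-identity verification of \cite{Frye-Garvan} as was done for \eqref{thm3-12}).

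The main obstacle I anticipate is the $\CT_y$ bookkeeping: because the linear form $i-j-k$ mixes three indices with signs, the shift $n\mapsto n+(\text{something})$ that clears the cross term will produce a genuinely bilateral sum whose even/odd split (needed to match Slater's list, which has $q^2$-based moduli like $(q;q)_{2n}$ and $(q;q)_{2n+1}$) is more delicate than in the $\mathcal{L}_2$ case of Theorem~\ref{CW-thm6.2-ij3-in}, where only a difference of two indices appeared. Getting the parity split and the resulting $\bmod\,28$ factorization to match the stated right-hand sides — in particular pinning down the exact power of $q$ in the $4q^{c}$ term and confirming that only two products (not three) survive — is where the real work lies; everything else is an application of \eqref{Euler1}, \eqref{Euler2}, \eqref{Jacobi}, \eqref{q-binomial}, \eqref{Gauss} and the cited (S.\ $n$) identities.
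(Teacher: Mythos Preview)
Your plan is more elaborate than necessary and the $\CT_y$ step is not needed at all. The key observation you missed is that for \emph{fixed} $l$, each of $i,j,k$ appears in the exponent only as $i^2+il$, $j^2-jl$, $k^2-kl$, which is exactly the shape required for \eqref{Euler2} (with $q\mapsto q^2$). Summing over $i,j,k$ first therefore gives directly
\[
F(u,v,w,t;q^2)=\sum_{l\ge 0}\frac{t^l q^{l^2}}{(q^2;q^2)_l}\,(-uq^{1+l};q^2)_\infty(-vq^{1-l};q^2)_\infty(-wq^{1-l};q^2)_\infty,
\]
a single sum in $l$ with no auxiliary variable and no $q$-Gauss step. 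Splitting this sum by the parity of $l$ (which you did anticipate) then reduces each of \eqref{add-eq-F-1}--\eqref{add-eq-F-3} to a pair of single-sum Slater evaluations.

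You correctly guessed the relevant families \eqref{S31}--\eqref{S33} and \eqref{S59}--\eqref{S61}, but the way modulus~$28$ appears is more specific than you indicated: one parity class lands on \eqref{S59}--\eqref{S61} after $q\mapsto q^2$, while the other lands on \eqref{S31}--\eqref{S33} after the substitution $q\mapsto -q$, and it is the base $-q^7$ in the resulting products $(-q^a,q^b,-q^7;-q^7)_\infty$ that unfolds into the modulus-$28$ factors. No Bailey pairs and no appeal to \cite{Frye-Garvan} are needed. In short, the correct template is the proof of Theorem~\ref{CW-thm5.1-in}, not Theorem~\ref{CW-thm6.2-ij3-in}; pattern-matching to the latter is what led you to introduce the unnecessary constant-term machinery.
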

\begin{proof}
The identity \eqref{add-eq-F-symmetry} is trivial.
Summing over $i,j,k$ using \eqref{Euler2}, we deduce that
\begin{align}
    F(u,v,w,t;q^2)=\sum_{l=0}^\infty \frac{q^{l^2}t^l}{(q^2;q^2)_l}(-uq^{1+l};q^2)_\infty (-vq^{1-l};q^2)_\infty (-wq^{1-l};q^2)_\infty. \label{add-thm-start}
\end{align}

(1) When $u=v=w=t=1$, we have
\begin{align}
    F(1,1,1,1;q^2)=\sum_{l=0}^\infty \frac{q^{l^2}}{(q^2;q^2)_l}(-q^{1+l};q^2)_\infty(-q^{1-l};q^2)_\infty^2=S_0(q)+S_1(q), \label{add-1-S0S1}
\end{align}
where $S_0(q)$ and $S_1(q)$ correspond to the sum with $l$ even and odd, respectively. We have
\begin{align}
   & S_0(q)=\sum_{l=0}^\infty \frac{q^{4l^2}}{(q^2;q^2)_{2l}}(-q^{1+2l};q^2)_\infty (-q^{1-2l};q^2)_\infty^2 \nonumber \\
    &=(-q;q^2)_\infty^3 \sum_{l=0}^\infty \frac{q^{2l^2}(-q;q^2)_l}{(q^2;q^2)_{2l}}=(-q;q^2)_\infty^3 \sum_{l=0}^\infty \frac{q^{2l^2}}{(q;q^2)_l(q^4;q^4)_l} \nonumber  \\
    &=(-q;q^2)_\infty^3 \frac{(-q^3,q^4,-q^7;-q^7)_\infty}{(q^2;q^2)_\infty}. \label{add-1-S0}
\end{align}
Here for the last equality we used \eqref{S33} with $q$ replaced by $-q$.

Similarly, we have
\begin{align}
    &S_1(q)=\sum_{l=0}^\infty \frac{q^{4l^2+4l+1}}{(q^2;q^2)_{2l+1}}(-q^{2l+2};q^2)_\infty (-q^{-2l};q^2)_\infty^2 \nonumber \\
    &=4q(-q^2;q^2)_\infty^3 \sum_{l=0}^\infty \frac{q^{2l^2+2l}(-q^2;q^2)_l}{(q^2;q^2)_{2l+1}}=4q(-q^2;q^2)_\infty^3\sum_{l=0}^\infty \frac{q^{2l^2+2l}}{(q^2;q^2)_l(q^2;q^4)_{l+1}}  \nonumber \\
    &=4q(-q^2;q^2)_\infty^3\frac{(q^8,q^{20},q^{28};q^{28})_\infty}{(q^2;q^2)_\infty}. \label{add-1-S1}
\end{align}
Here for the last equality we used \eqref{S60} with $q$ replaced by $q^2$.

Substituting \eqref{add-1-S0} and \eqref{add-1-S1} into \eqref{add-1-S0S1}, we obtain \eqref{add-eq-F-1}.

(2) Setting $(u,v,w,t)=(1,1,1,q)$ in \eqref{add-thm-start}, we deduce that
\begin{align}
    F(1,1,1,q;q^2)=\sum_{l=0}^\infty \frac{q^{l^2+l}}{(q^2;q^2)_l}(-q^{1+l};q^2)_\infty (-q^{1-l};q^2)_\infty^2 =S_0(q)+S_1(q), \label{add-2-S0S1}
\end{align}
where $S_0(q)$ and $S_1(q)$ correspond to the sum with $l$ even and odd, respectively.

We have
\begin{align}
   & S_0(q)=\sum_{l=0}^\infty \frac{q^{4l^2+2l}}{(q^2;q^2)_{2l}}(-q^{1+2l};q^2)_\infty (-q^{1-2l};q^2)_\infty^2 \nonumber \\
    &=(-q;q^2)_\infty^3 \sum_{l=0}^\infty \frac{q^{2l^2+2l}(-q;q^2)_l}{(q^2;q^2)_{2l}} =(-q;q^2)_\infty^3 \sum_{l=0}^\infty \frac{q^{2l^2+2l}}{(q;q^2)_l(q^4;q^4)_l} \nonumber \\
    &=(-q;q^2)_\infty^3 \frac{(q^2,-q^5,-q^7;-q^7)_\infty}{(q^2;q^2)_\infty}. \label{add-2-S0}
\end{align}
Here for the last equaltiy we used \eqref{S32} with $q$ replaced by $-q$.

Similarly, we have
\begin{align}
    &S_1(q)=\sum_{l=0}^\infty \frac{q^{4l^2+6l+2}}{(q^2;q^2)_{2l+1}}(-q^{2+2l};q^2)_\infty (-q^{-2l};q^2)_\infty^2 \nonumber \\
    &=4q^2(-q^2;q^2)_\infty^3 \sum_{l=0}^\infty \frac{q^{2l^2+4l}(-q^2;q^2)_l}{(q^2;q^2)_{2l+1}} =4q^2(-q^2;q^2)_\infty^3 \sum_{l=0}^\infty \frac{q^{2l^2+4l}}{(q^2;q^2)_l(q^2;q^4)_{l+1}} \nonumber \\
    &=4q^2(-q^2;q^2)_\infty^3\frac{(q^4,q^{24},q^{28};q^{28})_\infty}{(q^2;q^2)_\infty}. \label{add-2-S1}
\end{align}
Here for the last equality we used \eqref{S59} with $q$ replaced by $q^2$.

Substituting \eqref{add-2-S0} and \eqref{add-2-S1} into \eqref{add-2-S0S1}, we obtain \eqref{add-eq-F-2}.

(3) Setting $(u,v,w,t)=(q^{-1},q,q^{-1},1)$ in \eqref{add-thm-start}, we deduce that
\begin{align}
    F(q^{-1},q,q^{-1},1;q^2)&=\sum_{l=0}^\infty \frac{q^{l^2}}{(q^2;q^2)_l}(-q^l;q^2)_\infty(-q^{2-l};q^2)_\infty(-q^{-l};q^2)_\infty \nonumber \\
    &=S_0(q)+S_1(q), \label{add-3-S0S1}
\end{align}
where $S_0(q)$ and $S_1(q)$ correspond to the sum with $l$ even and odd, respectively.

We have
\begin{align}
    &S_0(q)=\sum_{l=0}^\infty \frac{q^{4l^2}}{(q^2;q^2)_{2l}}(-q^{2l};q^2)_\infty (-q^{2-2l};q^2)_\infty (-q^{-2l};q^2)_\infty \nonumber \\
    &=4(-q^2;q^2)_\infty^3 \sum_{l=0}^\infty \frac{q^{2l^2}(-q^2;q^2)_l}{(q^2;q^2)_{2l}}=4(-q^2;q^2)_\infty^3 \sum_{l=0}^\infty \frac{q^{2l^2}}{(q^2;q^2)_l(q^2;q^4)_l}\nonumber \\
    &=4(-q^2;q^2)_\infty^3 \frac{(q^{12},q^{16},q^{28};q^{28})_\infty}{(q^2;q^2)_\infty}. \label{add-3-S0}
\end{align}
Here for the last equality we used \eqref{S61} with $q$ replaced by $q^2$.

Similarly, we have
\begin{align}
&S_1(q)=\sum_{l=0}^\infty \frac{q^{4l^2+4l+1}}{(q^2;q^2)_{2l+1}}(-q^{2l+1};q^2)_\infty (-q^{1-2l};q^2)_\infty (-q^{-1-2l};q^2)_\infty \nonumber \\
&=(-q;q^2)_\infty^3\sum_{l=0}^\infty \frac{q^{2l^2+2l}(-q;q^2)_{l+1}}{(q^2;q^2)_{2l+1}} =(-q;q^2)_\infty^3 \sum_{l=0}^\infty \frac{q^{2l^2+2l}}{(q;q^2)_{l+1}(q^4;q^4)_l} \nonumber \\
&=(-q;q^2)_\infty^3 \frac{(-q,q^6,-q^7;-q^7)_\infty}{(q^2;q^2)_\infty}. \label{add-3-S1}
\end{align}
Here for the last equality we used \eqref{S31} with $q$ replaced by $-q$.

Substituting \eqref{add-3-S0} and \eqref{add-3-S1} into \eqref{add-3-S0S1}, we obtain \eqref{add-eq-F-3}.
\end{proof}

Note that the lifts via $\mathcal{L}_2$ and $\mathcal{L}_3$ generate essentially the same set of rank four Nahm sums (simply interchange the variables $n_2$ and $n_3$). Hence it suffices to discuss $\mathcal{L}_2$ and we record the lift and lift-dual via $\mathcal{L}_2$ in Table \ref{tab-A-exam5-L2}. At this moment, we do not know how to prove the modularity of the lift-dual. It seems that at least two infinite products will be involved to express the Nahm sums associated with the triples $\mathcal{DL}_2(A,B,C)$.

{\small
\begin{table}[htbp]
\centering
\caption{Lift and lift-dual via $\mathcal{L}_2$ of the modular triples $(A,B,C)$  in \eqref{eq-Cao-Wang-unsolved}}
    \label{tab-A-exam5-L2}
\begin{tabular}{|c|cccc|}
\hline 
\padedvphantom{I}{3ex}{3ex}
$\mathcal{L}_2(A)$     &  \multicolumn{4}{c|}{$\left(\begin{smallmatrix}  2 & -1 & 0 & 1 \\ -1 & 2 & 0 & -1 \\ 0 & 0 & 2 & 1 \\ 1 & -1 & 1 & 2 \end{smallmatrix}\right)$}   \\
    \hline 
    \padedvphantom{I}{3ex}{3ex}
 $\mathcal{L}_2(B)$  & $\left(\begin{smallmatrix} -1 \\ 1 \\ 0 \\ -1 \end{smallmatrix}\right)$ &  $\left(\begin{smallmatrix} 0 \\ 0 \\ 0 \\ 0 \end{smallmatrix}\right)$ & $\left(\begin{smallmatrix} -1 \\ 1 \\ 1 \\ 0 \end{smallmatrix}\right)$ & $\left(\begin{smallmatrix}
     -1 \\ 0 \\ 1 \\ 0
 \end{smallmatrix}\right)$  \\
  \padedvphantom{I}{1ex}{1ex}
 $\mathcal{L}_2(C)$   & {\tiny ${3}/{14}$} & {\tiny $-{1}/{14}$} & {\tiny ${5}/{14}$}  & {\tiny ${3}/{14}$}  \\
    \hline 
    \padedvphantom{I}{3.5ex}{3.5ex}
     $\mathcal{DL}_2(A)$ &
      \multicolumn{4}{c|}{$\left(\begin{smallmatrix} 4/5 & 1/5 & 1/5 & -2/5 \\ 1/5 & 4/5 & -1/5 & 2/5 \\ 1/5 & -1/5 & 4/5 & -3/5 \\ -2/5 & 2/5 & -3/5 & 6/5 \end{smallmatrix}\right)$} \\
    \hline 
    \padedvphantom{I}{3.5ex}{3.5ex}
    $\mathcal{DL}_2(B)$
    & $\left(\begin{smallmatrix} -1/5 \\ 1/5 \\ 1/5 \\ -2/5  \end{smallmatrix}\right)$ &  $\left(\begin{smallmatrix} 0 \\ 0 \\ 0 \\ 0 \end{smallmatrix}\right)$ & $\left(\begin{smallmatrix} -2/5 \\ 2/5 \\ 2/5 \\ 1/5 \end{smallmatrix}\right)$  & $\left(\begin{smallmatrix}
        -3/5 \\ -2/5 \\ 3/5 \\ -1/5
    \end{smallmatrix}\right)$  \\
     \padedvphantom{I}{1ex}{1ex}
  $\mathcal{DL}_2(C)$
    & {\tiny ${2}/{105}$} & {\tiny $-{2}/{21}$} & {\tiny ${8}/{105}$}  & {\tiny ${23}/{105}$} \\
    \hline
\end{tabular}
\end{table}
}

\subsection{Lift-dual of the CW Example 3}
We list the modular triples in \cite[Eq.\ (1.15)]{Cao-Wang2024}) and their lift and lift-dual in Table \ref{tab-1}. Note that applying the lifting operators $\mathcal{L}_i$ ($i=1,2,3$) to the modular triples in Table \ref{tab-1} will generate the same set of modular triples,  and hence we only list the lift by $\mathcal{L}_2$.

{\small
\begin{table}[htbp]
\centering
\caption{Modular triples in \cite[Eq.\ (1.15)]{Cao-Wang2024} and their lift and lift-dual}\label{tab-1}
\begin{tabular}{|c|cccc|}
  \hline 
  \padedvphantom{I}{3ex}{3ex}
  $A$ & \multicolumn{4}{c|}{$\left(\begin{smallmatrix} 3/2 & -1/2 & -1/2  \\  -1/2 & 3/2 & -1/2  \\ -1/2 & -1/2 & 3/2  \\  \end{smallmatrix}\right)$} \\
  \hline 
  \padedvphantom{I}{3ex}{3ex}
        $B$ & $\left(\begin{smallmatrix} 1/2 \\ -1/2 \\ -1/2 \end{smallmatrix}\right)$ & $ \left(\begin{smallmatrix} -1/2 \\ -1/2 \\ 1/2 \end{smallmatrix}\right)$ & $\left(\begin{smallmatrix} -1/2 \\ 1/2 \\ -1/2 \end{smallmatrix}\right)$ & $\left(\begin{smallmatrix} 0 \\ 0 \\ 0 \end{smallmatrix}\right)$ \\
         \padedvphantom{I}{1ex}{1ex}
        $C$ & {\tiny $-3/40$} & {\tiny $-3/40$} & {\tiny $-3/40$ }& {\tiny $3/40$} \\
    \hline 
    \padedvphantom{I}{4ex}{4ex}
  $\mathcal{L}_2(A)$ & \multicolumn{4}{c|}{$\left(\begin{smallmatrix} 3/2 & 1/2 & -1/2 & 1 \\  1/2 & 3/2 & -1/2 & 1 \\ -1/2 & -1/2 & 3/2 & -1 \\ 1 & 1 & -1 & 2  \end{smallmatrix}\right)$} \\
   \hline 
   \padedvphantom{I}{4ex}{4ex}
        $\mathcal{L}_2(B)$ & $\left(\begin{smallmatrix} 1/2 \\ -1/2 \\ -1/2 \\ 0 \end{smallmatrix}\right)$ & $ \left(\begin{smallmatrix} -1/2 \\ 1/2 \\ -1/2 \\ 0 \end{smallmatrix}\right)$ & $\left(\begin{smallmatrix} -1/2 \\ -1/2 \\ 1/2 \\ -1 \end{smallmatrix}\right)$ & $\left(\begin{smallmatrix} 0 \\ 0 \\ 0 \\ 0 \end{smallmatrix}\right)$ \\
         \padedvphantom{I}{1ex}{1ex}
        $\mathcal{L}_2(C)$ & {\tiny $-3/40$} & {\tiny $-3/40$} & {\tiny $-3/40$} & {\tiny $3/40$} \\
    \hline 
    \padedvphantom{I}{4ex}{4ex}
    $\mathcal{DL}_2(A)$ & \multicolumn{4}{c|}{$\left(\begin{smallmatrix} 1 & 0 & 0 & -1/2 \\  0 & 1 & 0 & -1/2 \\ 0 & 0 & 1 & 1/2 \\ -1/2 & -1/2 & 1/2 & 5/4  \end{smallmatrix}\right)$} \\
   \hline 
   \padedvphantom{I}{4ex}{4ex}
        $\mathcal{DL}_2(B)$ & $\left(\begin{smallmatrix} -1/2 \\ 1/2 \\ -1/2 \\ -1/4 \end{smallmatrix}\right)$ & $ \left(\begin{smallmatrix} 1/2 \\ -1/2 \\ -1/2 \\ -1/4  \end{smallmatrix}\right)$ & $\left(\begin{smallmatrix} 0 \\ 0 \\ 0 \\ -1/2\end{smallmatrix}\right)$ & $\left(\begin{smallmatrix} 0 \\ 0 \\ 0 \\ 0 \end{smallmatrix}\right)$ \\
         \padedvphantom{I}{1ex}{1ex}
        $\mathcal{DL}_2(C)$ & {\tiny $2/15$} & {\tiny $2/15$} & {\tiny $1/120$} & {\tiny $-11/120$} \\
    \hline
\end{tabular}
\end{table}
}

Since $n_1$ and $n_2$ are symmetric to each other in the quadratic form generated by $\mathcal{DL}_2(A)$, there are essentially three different Nahm sums associated with the triples $\mathcal{DL}_2(A,B,C)$ in Table \ref{tab-1}. We now justify their modularity by establishing the following identities.

\begin{theorem}\label{CW-thm5.1-in}
Let
\begin{align}
F(u,v,w,t;q^2):=\sum_{i,j,k,l\geq 0} \frac{u^iv^jw^kt^lq^{i^2+j^2+k^2+5l^2/4-il-jl+kl}}{(q^2;q^2)_i(q^2;q^2)_j(q^2;q^2)_k(q^2;q^2)_l}.
\end{align}
We have
\begin{align}
&F(u,v,w,t;q^2)=F(v,u,w,t;q^2), \label{thm2-0} \\
 &F(q^{-1},q,q^{-1},q^{-1/2};q^2)=4\frac{J_{4}^{3}J_{8,20}}{J_{2}^{4}}
+q^{-\frac{1}{4}}\frac{J_{2}^{7}J_{3,10}J_{4,20}}{J_{1}^{4}J_{4}^{4}J_{20}}, \label{thm2-1} \\
&F(1,1,1,q^{-1};q^2)=\frac{J_{2}^{7}J_{3,10}J_{4,20}}{J_{1}^{4}J_{4}^{4}J_{20}}
+4q^{\frac{1}{4}}\frac{J_{4}^{3}J_{8,20}}{J_{2}^{4}}, \label{thm2-2}\\
&F(1,1,1,1;q^2) =\frac{J_{2}^{7}J_{1,10}J_{8,20}}{J_{1}^{4}J_{4}^{4}J_{20}}
+4q^{\frac{5}{4}}\frac{J_{4}^{3}J_{4,20}}{J_{2}^{4}}. \label{thm2-3}
\end{align}
\end{theorem}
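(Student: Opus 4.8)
The plan is to follow the strategy that was used for the earlier identities of this paper (compare the proof of Theorem~\ref{thm-add1}): sum over three of the four summation variables with Euler's $q$-exponential identity, reduce to a one-dimensional sum, split that sum by the parity of the remaining index, and match the two resulting one-dimensional series against Slater-type identities from Section~\ref{sec-pre}.

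The symmetry \eqref{thm2-0} is immediate: $i$ and $j$ enter the quadratic form $i^2+j^2+k^2+5l^2/4-il-jl+kl$ only through $i^2,j^2$ and $-il,-jl$, and neither is coupled to $k$, so interchanging them fixes the summand. For the remaining identities I would first sum over $i$, $j$, $k$ using \eqref{Euler2} with $q$ replaced by $q^2$ (writing $q^{i^2}=q^i(q^2)^{(i^2-i)/2}$, and similarly for $j,k$), which collapses $F$ to
\[
F(u,v,w,t;q^2)=\sum_{l\geq 0}\frac{t^lq^{5l^2/4}}{(q^2;q^2)_l}\,(-uq^{1-l};q^2)_\infty(-vq^{1-l};q^2)_\infty(-wq^{1+l};q^2)_\infty .
\]
Then I would split this sum according to whether $l=2n$ or $l=2n+1$; the odd branch is where the fractional exponent $q^{5/4}$ enters, which accounts for the prefactors $q^{-1/4}$, $q^{1/4}$, $q^{5/4}$ in \eqref{thm2-1}--\eqref{thm2-3}. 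In each branch I would rewrite the three infinite products appearing in the summand using the elementary reductions $(-aq^{1-2n};q^2)_\infty=(-aq;q^2)_\infty\,a^nq^{-n^2}(-q/a;q^2)_n$ and $(-aq^{1+2n};q^2)_\infty=(-aq;q^2)_\infty/(-aq;q^2)_n$, together with their obvious shifted analogues. For the specializations in which $u$, $v$ or $w$ is a negative power of $q$, one factor in such a product equals $1+q^0=2$, i.e.\ $(-1;q^2)_\infty=2(-q^2;q^2)_\infty$ appears, and this is the origin of the coefficient $4$ in \eqref{thm2-1}--\eqref{thm2-3}. After these manipulations the even part and the odd part each become a fixed power of $(-q;q^2)_\infty=J_2^2/(J_1J_4)$ or of $(-q^2;q^2)_\infty=J_4/J_2$ times a single basic hypergeometric series of the shape $\sum_n q^{3n^2+cn}(-q;q^2)_n/(q^2;q^2)_{2n}$ or $\sum_n q^{3n^2+cn}(-q^2;q^2)_n/(q^2;q^2)_{2n+1}$.

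The final step is to recognize each residual series. I expect \eqref{S. 95} and \eqref{A. 97} to give the two pieces of \eqref{thm2-1} (the even branch of $F(q^{-1},q,q^{-1},q^{-1/2};q^2)$ reducing to \eqref{S. 46} read with $q\mapsto q^2$); \eqref{S. 95} and \eqref{S. 62} with $q\mapsto q^2$ for \eqref{thm2-2}; and \eqref{A. 100} and \eqref{S. 63} with $q\mapsto q^2$ for \eqref{thm2-3}. Adding the two branches and converting all $q$-products into $J_m$, $J_{a,m}$ notation (using $J_m(q^2)=J_{2m}$, $J_{a,m}(q^2)=J_{2a,2m}$, $(-q;q^2)_\infty=J_2^2/(J_1J_4)$, $(-q^2;q^2)_\infty=J_4/J_2$) then yields the asserted formulas. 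The only genuine obstacle is bookkeeping: carrying the half-integer powers of $q$ from $q^{5l^2/4}$ through the odd branch, handling the $n=0$ boundary behavior of the rewritten Pochhammer symbols and the $(-1;q^2)_n$ factors they generate (which partially cancel against factors coming from the other products), and pinning down precisely which entry of Slater's list — and with which base — matches each residual sum. None of this is conceptually difficult, but the exponent arithmetic has to be done with care.
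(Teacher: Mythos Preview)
Your plan is essentially identical to the paper's proof: sum over $i,j,k$ by Euler's identity, split on the parity of $l$, simplify the resulting Pochhammer factors, and identify each branch with a Slater-type identity. The only slip is in your bookkeeping for \eqref{thm2-1}: the two pieces come from \eqref{S. 46} (with $q\mapsto q^2$) and \eqref{A. 97}, not \eqref{S. 95} (which you correctly use for \eqref{thm2-2}); likewise \eqref{S. 95}, \eqref{A. 97}, \eqref{A. 100} are already written in base $q^2$ and do not need a further substitution, whereas \eqref{S. 46}, \eqref{S. 62}, \eqref{S. 63} do.
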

\begin{proof}
The identity \eqref{thm2-0} is trivial. Summing over $i,j,k$ using \eqref{Euler1} and then splitting the sum into two parts according to the parity of $l$, we have
\begin{align}
&F(u,v,w,t;q^2)=
\sum_{l\geq 0} \frac{t^lq^{\frac{5}{4}l^2}}{(q^2;q^2)_l}(-uq^{1-l},-vq^{1-l},-wq^{1+l};q^2)_{\infty} \nonumber\\
&=(-uq,-vq,-wq;q^2)_{\infty} \sum_{l\geq 0} \frac{t^{2l}q^{5l^2}(-uq^{1-2l},-vq^{1-2l};q^2)_{l}}{(q^2;q^2)_{2l}(-wq;q^2)_{l}} \nonumber\\
&\quad +(-u,-v,-wq^2;q^2)_{\infty} \sum_{l\geq 0} \frac{t^{2l+1}q^{5l^2+5l+\frac{5}{4}}(-uq^{-2l},-vq^{-2l};q^2)_{l}}{(q^2;q^2)_{2l+1}(-wq^2;q^2)_{l}} \nonumber\\
&=(-uq,-vq,-wq;q^2)_{\infty} \sum_{l\geq 0} \frac{u^lv^lt^{2l}q^{3l^2}(-q/u,-q/v;q^2)_{l}}{(q^2;q^2)_{2l}(-wq;q^2)_{l}} \nonumber\\
&\quad +(-u,-v,-wq^2;q^2)_{\infty} \sum_{l\geq 0} \frac{u^lv^lt^{2l+1}q^{3l^2+3l+\frac{5}{4}}(-q^{2}/u,-q^{2}/v;q^2)_{l}}{(q^2;q^2)_{2l+1}(-wq^2;q^2)_{l}}.
\end{align}
Letting $v=1/u$, we have
\begin{align}\label{thm-2-proof-start}
&F(u,1/u,w,t;q^2)=(-uq,-q/u,-wq;q^2)_{\infty} \sum_{l\geq 0} \frac{t^{2l}q^{3l^2}(-q/u,-qu;q^2)_{l}}{(q^2;q^2)_{2l}(-wq;q^2)_{l}} \nonumber\\
& \qquad +(-u,-1/u,-wq^2;q^2)_{\infty} \sum_{l\geq 0} \frac{t^{2l+1}q^{3l^2+3l+\frac{5}{4}}(-q^{2}/u,-q^{2}u;q^2)_{l}}{(q^2;q^2)_{2l+1}(-wq^2;q^2)_{l}}.
\end{align}

(1) Setting $(u,w,t)=(q^{-1},q^{-1},q^{-1/2})$ in \eqref{thm-2-proof-start}, we deduce that
\begin{align}
&F(q^{-1},q,q^{-1},q^{-1/2};q^2)=(-1,-q^{2},-1;q^2)_{\infty} \sum_{l\geq 0} \frac{q^{3l^2-l}(-q^2;q^2)_{l}}{(q^2;q^2)_{2l}} \nonumber \\
&\qquad +(-q^{-1},-q,-q;q^2)_{\infty} \sum_{l\geq 0} \frac{q^{3l^2+2l+\frac{3}{4}}(-q^{3};q^2)_{l}}{(q^2;q^2)_{2l+1}}.
\end{align}
Substituting \eqref{S. 46} and \eqref{A. 97} into it, we obtain \eqref{thm2-1}.

(2) Setting $(u,w,t)=(1,1,q^{-1})$ in \eqref{thm-2-proof-start}, we deduce that
\begin{align}
&F(1,1,1,q^{-1};q^2)=(-q,-q,-q;q^2)_{\infty} \sum_{l\geq 0} \frac{q^{3l^2-2l}(-q;q^2)_{l}}{(q^2;q^2)_{2l}} \nonumber \\
&\qquad +(-1,-1,-q^2;q^2)_{\infty} \sum_{l\geq 0} \frac{q^{3l^2+l+\frac{1}{4}}(-q^{2};q^2)_{l}}{(q^2;q^2)_{2l+1}}.
\end{align}
Substituting \eqref{S. 95} and \eqref{S. 62} into it, we obtain \eqref{thm2-2}.

(3) Setting $(u,w,t)=(1,1,1)$ in \eqref{thm-2-proof-start}, we deduce that
\begin{align}
&F(1,1,1,1;q^2) =(-q,-q,-q;q^2)_{\infty} \sum_{l\geq 0} \frac{q^{3l^2}(-q;q^2)_{l}}{(q^2;q^2)_{2l}} \nonumber \\
&\qquad +(-1,-1,-q^2;q^2)_{\infty} \sum_{l\geq 0} \frac{q^{3l^2+3l+\frac{5}{4}}(-q^{2};q^2)_{l}}{(q^2;q^2)_{2l+1}}.
\end{align}
Substituting \eqref{A. 100} and \eqref{S. 63} into it, we obtain \eqref{thm2-3}.
\end{proof}

To summarize, we have confirmed the modularity of four sets of rank four Nahm sums associated with the triples $\mathcal{DL}_i(A,B,C)$ listed in Tables \ref{Wang-3-Exam7}, \ref{tab-4},  \ref{tab-A-exam5-L1} and \ref{tab-1}.  For those we cannot prove at the moment, we propose the following conjecture.
\begin{conj}\label{conj-table}
The Nahm sums associated with the lift-dual triples $\mathcal{DL}_i(A,B,C)$  listed in Tables \ref{tab-A-exam1}--\ref{tab-A-exam4} and \ref{tab-A-exam5-L2} are all modular.
\end{conj}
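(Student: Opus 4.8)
The natural route to Conjecture~\ref{conj-table}, following exactly the pattern of the theorems already established in Sections~\ref{sec-exam}--\ref{sec-CW}, is to prove each case by producing a Rogers--Ramanujan type identity that writes the relevant rank four Nahm sum $f_{\mathcal{D}\mathcal{L}_i(A,B,C)}(q)$ as a finite $\mathbb{Q}$-linear combination of the modular infinite products $J_m$ and $J_{a,m}$; modularity then follows at once from the criterion recalled at the end of Section~\ref{sec-pre}, since each $q^{m/24}J_m$ and $q^{m/24+mP_2(a/m)/2}J_{a,m}$ is a weight $1/2$ modular form and one only has to check that the shifts $C$ and the weights match. In this way the conjecture is reduced to a family of $q$-series identities, of which \eqref{conj-eq-1}--\eqref{conj-eq-3}, \eqref{eq-conj-Z9-2}, \eqref{eq-conj-Z11-2}--\eqref{eq-conj-Z11-3} are the special cases for which we have already guessed a single-product right-hand side.

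The first step in each case is the same structural reduction used throughout the paper. Because of the rank-one-type shape of the lifted matrices, in the favorable instances one of the four summation variables of $\mathcal{D}\mathcal{L}_i(A)$ occurs linearly in the quadratic form attached to the other three; one then sums over those three indices using Euler's identities \eqref{Euler1} or \eqref{Euler2}, splits the remaining single sum according to the parity of its index, and each parity class produces a theta factor from the Jacobi triple product \eqref{Jacobi} times a single Rogers--Ramanujan type sum in $q^2$, which should match an entry of Slater's list \eqref{S. 9}--\eqref{A. 100} (possibly after $q\mapsto \pm q$ or $q\mapsto q^2$) or Bressoud's identity \eqref{eq-Bressoud-case}. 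When the Euler summations do not decouple --- which is what happens for $\mathcal{D}\mathcal{L}_2(A)$ in Table~\ref{tab-A-exam5-L2} and for $\mathcal{D}\mathcal{L}_1(A)$ in Table~\ref{tab-A-exam4}, whose quadratic forms retain a coupling among the surviving variables --- one introduces auxiliary variables and invokes the constant term method, exactly as in the proof of Theorem~\ref{CW-thm6.2-ij3-in}, reducing the problem to a double sum that one evaluates by iterating Bailey's lemma~\eqref{Bailey's lemma} together with the transfer lemmas \eqref{Lovejoy-a-aq} and \eqref{McLaughlin-a-a/q} and then summing through \eqref{Bailey's lemma-1.2}; the final $\alpha$-sum should telescope into a single theta series or a short combination thereof, which is recognized with the help of identities such as \eqref{Berndt-1}--\eqref{Berndt-2} and the algorithmic theta-identity verification of Frye--Garvan \cite{Frye-Garvan}. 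For the triples of tadpole type (Table~\ref{tab-A-exam1}), where the correct output is a \emph{sum of two} infinite products as in \cite{Shi-Wang}, one expects instead a parity splitting followed by two Slater-type evaluations, together with a rank-lowering phenomenon of the sort already visible in \eqref{eq-conj-Shi-Wang-equivalent}.

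The main obstacle is twofold. First, for Zagier's Example~12 (Table~\ref{tab-A-exam4}) and the $\mathcal{L}_2$-lift of the CW Example~2 (Table~\ref{tab-A-exam5-L2}) we do not yet have a target at all: the reduced multi-sums apparently require at least two infinite products, and pinning down the right-hand side is itself a nontrivial experimental task --- one must compute the $q$-expansion to high order and search among eta/theta quotients of the level predicted by the denominators occurring in $\mathcal{D}\mathcal{L}_i(C)$, which here (level $120$ and level $105$) is large. Second, even when the target is known, the single- or double-sums thrown up by the Euler/constant-term reduction need not appear in the classical tables, so one may have to construct the required Bailey pair chain from scratch, and arranging that chain so that the terminal sum collapses to a manageable theta combination is the delicate point --- it is precisely what made the proof of \eqref{thm3-12} laborious. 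It is entirely possible that some of the harder instances (Zagier's Examples~8, 9, 11, 12) sit beyond the current reach of the constant term plus Bailey-pair machinery, perhaps demanding a nonstandard Bailey lattice argument, and this is why we record Conjecture~\ref{conj-table} rather than a theorem.
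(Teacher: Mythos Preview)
The statement you are addressing is a \emph{conjecture} in the paper, not a theorem: the authors explicitly do not prove it and offer no argument beyond the numerical evidence and the partial single-product formulas recorded as Conjectures in Sections~\ref{sec-exam}--\ref{sec-CW}. There is therefore no ``paper's own proof'' to compare against.

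Your proposal is not a proof either, and you acknowledge this in your final paragraph. What you have written is an accurate summary of the methodology the paper uses for the cases it \emph{does} prove (Euler summation, parity splitting, Slater-list lookups, constant term extraction, Bailey chains), together with a correct diagnosis of why that methodology stalls on the remaining cases: the reduced sums fall outside the known tables, the target products are not yet identified for Tables~\ref{tab-A-exam4} and~\ref{tab-A-exam5-L2}, and the Bailey-pair constructions would have to be built from scratch. This is a reasonable research outline, but it establishes nothing; in particular, the sentence ``this is why we record Conjecture~\ref{conj-table} rather than a theorem'' is precisely the paper's own position. If the assignment was to supply a proof, none has been given.
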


\subsection*{Acknowledgements}
This work was supported by the National Key R\&D Program of China (Grant No.\ 2024YFA1014500).


\begin{thebibliography}{0}
\bibitem{Andrews1984} G.E. Andrews, Multiple $q$-series identities, Houston J. Math. 7(1) (1981), 11--22.

\bibitem{Andrews1998} G.E. Andrews, The Theory of Partitions, Addison-Wesley, 1976; Reissued Cambridge, 1998.

\bibitem{Berndt2006} B.C. Berndt, Number Theory in the Spirit of Ramanujan, AMS 2006.

\bibitem{Bressoud1980} D.M. Bressoud, Analytic and combinatorial generalizations of the Rogers--Ramanujan identities, Mem. Amer. Math. Soc. 24 (1980).

\bibitem{Bressoud2000} D.M. Bressoud, M.E.H. Ismail and D. Stanton, Change of base in Bailey pairs, Ramanujan J. 4 (2000), 435--453.

\bibitem{CGZ} F. Calegari, S. Garoufalidis and D. Zagier, Bloch groups, algebraic K-theory, units, and Nahm's conjecture, Ann. Sci. \'Ec. Norm. Sup\'er. (4) 56 (2023), no.\ 2, 383--426.


\bibitem{CMP} C. Calinescu, A. Milas and M. Penn, Vertex algebraic structure of principal subspaces of basic $A_{2n}^{(2)}$-modules, J. Pure Appl. Algebra 220 (2016), 1752--1784.

\bibitem{CRW} Z. Cao, H. Rosengren and L. Wang, On some double Nahm sums of Zagier, J. Combin. Theory Ser. A 202 (2024), Paper No. 105819.

\bibitem{Cao-Wang2024} Z. Cao and L. Wang, Some new modular rank three Nahm sums from a lift-dual operation, arXiv:2412.15767.

\bibitem{Feigin} I. Cherednik and B. Feigin, Rogers--Ramanujan type identities and Nil-DAHA, Adv. Math. 248 (2013), 1050--1088.


\bibitem{Frye-Garvan} J. Frye and F.G. Garvan, Automatic proof of theta-function identities, Elliptic integrals, elliptic functions and modular forms in quantum field theory, Texts Monogr. Symbol. Comput., Springer,
Cham, 2019, pp. 195--258.

\bibitem{Gasper-Rahman} G. Gasper and M. Rahman, Basic Hypergeometric Series, 2nd Edition, Encyclopedia of Mathematics and Its Applications, Vol. 96, Cambridge University Press, 2004.


\bibitem{Lovejoy2004} J. Lovejoy, A Bailey lattice. Proc. Am. Math. Soc. 132 (2004), 1507--1516.

\bibitem{McLaughlin2018} J. Mc Laughlin, Topics and methods in $q$-series, Monographs in Number Theory, 8, World Scientific Publishing Co. Pte. Ltd., Hackensack, NJ, 2018.

\bibitem{MW24} A. Milas and L. Wang, Modularity of Nahm sums for the tadpole diagram, Int. J. Number Theory 20 (1) (2024), 73--101.

\bibitem{Nahm1994} W. Nahm, Conformal field theory and the dilogarithm, In 11th International Conference on Mathematical Physics (ICMP-11) (Satelite colloquia: New Problems in General Theory of Fields and Particles), Paris, 1994, 662--667.

\bibitem{Nahmconf} W. Nahm, Conformal field theory, dilogarithms and three dimensional manifold, in ``Interface between Physics and Mathematics (Proceedings, Conference in Hangzhou, People's Republic of China, September 1993)'', eds. W. Nahm and J.-M. Shen, World Scientific, Singapore, 1994, 154--165.

\bibitem{Nahm07} W. Nahm, Conformal field theory and torsion elements of the Bloch group. In Frontiers in Number Theory, Physics, and Geometry II: On Conformal Field Theories, Discrete Groups and Renormalization, pp.\ 67--132. Springer, 2007.


\bibitem{Rogers}L.J. Rogers, Second memoir on the expansion of certain infinite products, Proc. London Math. Soc. 25 (1894), 318--343.

\bibitem{Rogers1937} L.J. Rogers, On two theorems of combinatory analysis and some allied identities,
Proc. London Math. Soc. 16 (1917), 315--336.

\bibitem{Shi-Wang} C. Shi and L. Wang, Modularity of tadpole Nahm sums in ranks 4 and 5, arXiv:2504.17737.

\bibitem{Sills}A.V. Sills, Finite Rogers--Ramanujan type identities, Electron. J. Combin. 10(1) (2003), \#R13.

\bibitem{Slater}L.J. Slater, Further identities of the Rogers--Ramanujan type, Proc. Lond. Math. Soc. (2) 54
(1) (1952), 147--167.

\bibitem{VZ}M. Vlasenko and S. Zwegers, Nahm's conjecture: asymptotic computations and counterexamples,
Commun. Number Theory Phys. 5(3) (2011), 617--642.


\bibitem{Wang-rank2} L. Wang, Identities on Zagier's rank two examples for Nahm's problem, Res. Math. Sci. 11, 49 (2024).

\bibitem{Wang-rank3}L. Wang, Explicit forms and proofs of Zagier's rank three examples for Nahm's problem, Adv. Math. 450 (2024), 109743.


\bibitem{Wang-counterexample} L. Wang, Counterexamples to some duality principles on Nahm sums, arXiv:2411.09701v3.


\bibitem{Zagier} D. Zagier, The dilogarithm function, in Frontiers in Number Theory, Physics and Geometry, II, Springer, 2007, 3--65.

\end{thebibliography}
\end{document}